\tikzstyle{V}=[fill=black,circle,scale=0.4, outer sep = 4pt]
\newtheorem{thm}{Theorem}[section]
\newtheorem{prop}[thm]{Proposition}
\newtheorem{cor}[thm]{Corollary}
\newtheorem{lemma}[thm]{Lemma}
\theoremstyle{remark}
\newtheorem{rmk}[thm]{Remark}
\newtheorem{example}[thm]{Example}
\theoremstyle{definition}
\newtheorem{defn}[thm]{Definition}
\newtheorem{constr}[thm]{Construction}
\DeclareMathOperator{\Hom}{Hom}
\newcommand{\bi}{\begin{itemize}}
\newcommand{\ei}{\end{itemize}}
\newcommand{\be}{\begin{enumerate}}
\newcommand{\ee}{\end{enumerate}}
\newcommand{\K}{\mathcal{K}}
\newcommand{\M}{\mathcal{M}}
\newcommand{\R}{\mathbb{R}}
\newcommand{\N}{\mathbb{N}}
\newcommand{\Z}{\mathbb{Z}}
\newcommand{\chaincomplex}[1]{\left({#1}\right)}
\newcommand{\Presolution}{\mathcal{P}}
\begin{document}
\title[Isomorphism of the cubical and categorical cohomology]{Isomorphism of the cubical and categorical cohomology groups of a higher-rank graph}

\author{Elizabeth Gillaspy}\address{E.G.: Department of Mathematical Sciences,
	University of Montana,
	32 Campus Drive \#0864, 
	Missoula, MT 59812, USA
	}\email{elizabeth.gillaspy@mso.umt.edu}
\author{Jianchao Wu}\address{J.W.: Department of Mathematics, Penn State University, 109 McAllister Building, University Park, PA 16802, USA}\email{jianchao.wu@psu.edu}
\thanks{E.G. was partially supported by the Deutsches Forschungsgemeinschaft via the  SFB 878 ``Groups, Geometry, and Actions.'' J.W. was partially supported by NSF grant \#DMS--1564401.}

\date{\today}
\maketitle
\begin{abstract}
	We use category-theoretic techniques to provide two proofs showing that for a higher-rank graph $\Lambda$, its cubical \mbox{\mbox{(co-)}homology} and categorical \mbox{\mbox{(co-)}homology} groups are isomorphic in all degrees, thus answering a question of Kumjian, Pask and Sims in the positive. Our first proof uses the topological realization of a higher-rank graph, which was introduced by Kaliszewski, Kumjian, Quigg, and Sims. In our more combinatorial second proof, we construct, explicitly and in both directions, maps on the level of \mbox{\mbox{(co-)}chain} complexes that implement  said isomorphism. Along the way, we extend the definition of cubical \mbox{\mbox{(co-)}homology} to allow arbitrary coefficient modules. 
\end{abstract}

\tableofcontents

\section{Introduction}
The $C^*$-algebras associated to directed graphs \cite{CK, cuntz-CK, enomoto-watatani-1,kprr, kpr} have played an important role in $C^*$-algebra theory, largely because of the tight links between properties of the $C^*$-algebra, those of the underlying directed graph and those of the associated symbolic dynamical system.  For instance, the $K$-theory \cite{raeburn-szyman, drinen-tomforde} and the ideal structure \cite{bhrs,hong-syman} of graph $C^*$-algebras can be computed directly from the graph.  The close structural ties between directed graphs and their $C^*$-algebras can also be used to identify $C^*$-algebras which are not graph $C^*$-algebras: for example, any simple $C^*$-algebra which is neither AF nor purely infinite \textemdash\  such as the $C^*$-algebras of noncommutative tori \textemdash\  cannot be a graph $C^*$-algebra \cite{kpr}.  

Building on work of Robertson and Steger \cite{robertson-steger}, Kumjian and Pask introduced \emph{higher-rank graphs} in \cite{kp} to extend the successes of graph $C^*$-algebras to a broader class of $C^*$-algebras.   The higher-rank graphs of rank $k$ (also called $k$-graphs) can be viewed as a $k$-dimensional generalization of directed graphs (which correspond to the case $k=1$), although they are formally defined as a countable category equipped with a degree functor. The construction of $k$-graph $C^*$-algebras generalizes that of graph $C^*$-algebras. As in the case of directed graphs, many structural properties of $k$-graph $C^*$-algebras are evident from the underlying $k$-graphs, such as their simplicity and ideal structure \cite{rsy2, robertson-sims, skalski-zacharias-1, davidson-yang-periodicity, kang-pask}, quasidiagonality \cite{clark-huef-sims} and KMS states \cite{aHLRS, aHLRS1}.  Higher-rank graphs have also provided crucial examples \cite{ruiz-sims-sorensen, spielberg-kirchberg,bnrsw,bcfs,prrs}  for Elliott's  classification program for simple separable nuclear $C^*$-algebras.

Compared to the theory of graph $C^*$-algebras, a fascinating new feature of $k$-graphs with $k>1$ is the possibility to twist the construction of $C^*(\Lambda)$ with a 2-cocycle on $\Lambda$ \cite{kps-hlogy}, in a way that generalizes the construction of noncommutative tori. By expanding the class of higher-rank graph $C^*$-algebras to include twisted $k$-graph algebras, we vastly increase the class of $C^*$-algebras which we can analyze via the combinatorial perspective of higher-rank graphs.  For example, the irrational rotation algebras arise as twisted $k$-graph algebras \cite[Example 7.7]{kps-hlogy}, but not as (untwisted) graph or higher-rank graph algebras \cite[Corollary 5.7]{evans-sims}.\footnote{However,  \cite[Example 6.5]{prrs} shows that the irrational rotation algebras are Morita equivalent to certain 2-graph algebras.}

Along with this extra flexibility comes a series of questions. What type of 2-cocycles are allowed? When do 2-cocycles induce the same twisted $C^*$-algebra? How do we compute with them? What is the relation between this construction and that of twisted groupoid $C^*$-algebras? In order to answer these questions, a systematic study of the cohomology groups of higher-rank graphs is in order. 

Complicating the matter further is the fact that there is more than one construction of cohomology for a higher-rank graph. In view of the history of homological theories for topological spaces, categories, etc, this is not at all surprising \textemdash\ as the relation between singular and simplicial homology demonstrates, having multiple approaches can be a core strength of \mbox{(co-)}homology theories. For a higher-rank graph $\Lambda$ and a coefficient abelian group $M$, Kumjian, Pask, and Sims have defined both categorical cohomology groups $H^n_{} (\Lambda, M)$ \cite{kps-twisted} and cubical cohomology groups $H^n_{\operatorname{cub}} (\Lambda, M)$ \cite{kps-hlogy}. The latter can be viewed as the cohomology groups of the topological realization associated to a $k$-graph (\cite{kalisz-kumjian-quigg-sims, kps-hlogy}) and lead to cocycles which are often easy to compute explicitly. The former are computed from composable tuples by treating a higher-rank graph as a small category; they are more flexible and make some theoretical results easier to obtain. A natural question was raised: are the cubical and categorical cohomology groups isomorphic? 

In \cite{kps-twisted}, Kumjian, Pask, and Sims answered this question affirmatively in dimensions $n = 0$, $1$, and $2$. Furthermore, they provided explicit formulas for these isomorphisms on the cocycle level, making explicit computations possible. However, their proof methods were ad hoc and (in dimension 2) very technical. Although they conjectured that the cubical and categorical cohomology groups should agree in all dimensions, they suggested that a new approach would be needed.  

We remark that establishing isomorphism in all dimensions is desirable, even if one is only interested in 2-cocycles. For one thing, a proof that works in full generality will likely be more natural and give us a better understanding of these cohomology groups.  Perhaps more importantly, many crucial  techniques in homological algebra involve  long exact sequences \textemdash\ e.g., the long exact sequence associated to a change of coefficient groups \textemdash\ so an understanding of the entire collection of \mbox{(co-)}homology groups will be indispensable in applying these techniques to higher-rank graphs.  

In this paper, we provide two proofs that the cubical and categorical \mbox{(co-)}homology groups for a higher-rank graph $\Lambda$ do indeed agree in all dimensions. The first proof is conceptual but abstract, while the second one is computational and provides explicit chain maps. As predicted by Kumjian, Pask, and Sims, our proof methods rely on a new (or at least unusual) approach to $k$-graphs: we view $k$-graphs primarily as categories.  This contrasts with the usual combinatorial perspective on $k$-graphs, which views them as $k$-dimensional generalizations of directed graphs. We also make crucial use of the topological realizations of $k$-graphs, which were introduced in \cite{kalisz-kumjian-quigg-sims}.  
In addition to enhancing the \mbox{(co-)}homological tools available for the analysis of higher-rank graphs and their $C^*$-algebras, therefore, this paper demonstrates the utility of studying $k$-graphs from category-theoretic and topological perspectives.  Indeed, the authors believe that the insight offered by these perspectives should shed more light on various problems involving twisted higher-rank graph $C^*$-algebras, such as their simplicity  and $K$-theory; we plan to address these in future work.

This paper is organized as follows.  We begin by reviewing the basics of higher-rank graphs, as well as some concepts from homological algebra, among which is the somewhat less common notion of a \emph{free $\Lambda$-module over a base} (see Definition~\ref{def:free-module} or \cite{Lueck89}), which will be of great use to us. In Section~\ref{sec:cubical}, we review the construction of cubical \mbox{(co-)}homology and generalize it to allow arbitrary modules as coefficients. We work in this generality throughout the paper. 

Common to both of our proofs, in Section~\ref{sec:new-free-res}, we articulate our construction of a chain complex $\chaincomplex{\Z \widetilde{Q}_n(\widetilde{\Lambda})}_{n\in \N}$ of $\Lambda$-modules (see Construction~\ref{constr:cubical-resolution}), which we call the \emph{cubical free resolution} associated to a $k$-graph, though the fact that it is a resolution (i.e., it is exact) is only made clear later. This chain complex computes the cubical \mbox{(co-)}homology groups via standard constructions (see Proposition~\ref{prop:cubical-chain-iso}). Connecting it with the categorical \mbox{(co-)}homology is thus the central issue for the rest of the paper, for which our two proofs diverge. 

Our first proof of the isomorphism between the cubical and categorical \mbox{(co-)}homology groups of a $k$-graph $\Lambda$ builds on work of Kumjian and the first-named author \cite{gillaspy-kumjian}, which reinterprets the categorical  cohomology groups of \cite{kps-twisted} using the framework of modules over a small category.  Standard arguments (cf.~\cite[Corollary III.6.3]{maclane}) then imply that the categorical \mbox{(co-)}homology of $\Lambda$ can be computed by any free resolution of the trivial $\Lambda$-module $\Z^\Lambda$ \textemdash\ in particular, the cubical free resolution, provided that we can show it is indeed a resolution. This last point is the main goal of Section~\ref{sec:initial-vertices}, which offers a proof of the exactness of $\chaincomplex{\Z \widetilde{Q}_n(\widetilde{\Lambda})}_{n\in \N}$ by showing that if a $k$-graph contains an initial object in the category-theoretic sense, then its topological realization is contractible (see Proposition~\ref{prop:top-realiz-contr}). As a side comment, Remark~\ref{rmk:alg-of-Lambda-v} shows that whenever $\Lambda$ has an initial object, $C^*(\Lambda) $ is canonically isomorphic to the algebra of compact operators on the Hilbert space spanned by the vertices of $\Lambda$. 

In Section~\ref{sec:main}, we complete our first proof (see Theorem~\ref{thm:cubical-equals-categorical}) and discuss a few consequences. For example, our isomorphism implies that the categorical \mbox{(co-)}homology groups of a $k$-graph $\Lambda$ vanish in dimensions greater than $k$ and, at least when the coefficient is a constant module, only depend on the topological realization of $\Lambda$. These are not at all clear from the definition of categorical \mbox{(co-)}homology. 

The remaining Section~\ref{sec:chain-maps} details our more combinatorial second proof. In the same way as $\chaincomplex{\Z \widetilde{Q}_n(\widetilde{\Lambda})}_{n\in \N}$ computes the cubical \mbox{(co-)}homology, the categorical \mbox{(co-)}homology is defined in \cite{gillaspy-kumjian, kps-twisted} by a chain complex $\chaincomplex{\mathcal P_n}_{n\in \N}$ of $\Lambda$-modules, which may be called the \emph{simplicial free resolution}. Without using the knowledge that $\chaincomplex{\Z \widetilde{Q}_n(\widetilde{\Lambda})}_{n\in \N}$ is exact, we proceed by constructing explicit chain maps back and forth between $\chaincomplex{\Z \widetilde{Q}_n(\widetilde{\Lambda})}_{n\in \N}$ and $\chaincomplex{\mathcal P_n}_{n\in \N}$ that induce a chain homotopy equivalence between the two chain complexes (see Proposition~\ref{prop:homotopy-equivalence}); thus by standard homological algebra, these chain maps induce  isomorphisms between the two types of \mbox{(co-)}homology groups (see Theorem~\ref{thm:chain-maps}). Intuitively speaking, we construct a ``triangulation chain map'' $\triangledown_*$ which ``turns boxes into triangles'' (that is, converts cubical $n$-chains into categorical $n$-chains), as well as a ``cubulation chain map'' $\boxempty_*$ which ``turns triangles into boxes'' (that is, converts categorical $n$-chains into cubical $n$-chains). It takes some nontrivial computations to verify these indeed form chain maps, i.e., they intertwine the boundary maps in the chain complexes (see Proposition~\ref{prop:triangle-is-chain-map} and Theorem~\ref{thm:box-is-chain-map}). But once this is done, these chain maps can also be dualized to give cochain maps that induce isomorphisms between the cohomology groups, which enable us to convert cubical cocycles to categorical cocycles and vice versa. 

The final parts of Section \ref{sec:chain-maps} prove the naturality of these \mbox{(co-)}chain maps (see Proposition~\ref{prop:naturality}) and compare them with the explicit isomorphisms of \cite{kps-twisted} in degrees 0, 1, and 2. Up to a sign in degree 2, our isomorphisms agree with those of \cite{kps-twisted}.

\section{Preliminaries}

\subsection{Higher-rank graphs}\label{subsec:prelim-k-graphs}

We begin by fixing some notational conventions. The natural numbers $\N$ will always include 0; we write $e_i$ for the canonical $i$-th generator of $\N^k$. 
 If $n= (n_1, \ldots, n_k) \in \N^k$, we write $ |n| := \sum_{i=1}^k n_i.$

We often view $\N^k$ as a small category with one object, namely 0, and with composition of morphisms given by addition. Thus, the notation $n \in \N^k$ means that $n$ is a morphism in the category $\N^k$.  Inspired by this, we will follow the usual conventions for higher-rank graphs and use the arrows-only picture of category theory.  That is, we identify the objects of a small category $\Lambda$ with its identity morphisms, and 
\[ \lambda \in \Lambda \text{ means } \lambda \in \text{Mor}\, \Lambda.\]

Given $\lambda \in \Lambda$, we denote its source and range by $r(\lambda)$ and $s(\lambda)$   respectively.  For $r \geq 1$, the collection of composable $r$-tuples in $\Lambda$ is 
\[ \Lambda^{*r} = \{ (\lambda_1, \ldots, \lambda_r) \in \Lambda \times \cdots \times \Lambda \colon  s(\lambda_i) = r(\lambda_{i+1})\}.\]

\begin{defn}\cite[Definitions 1.1]{kp}
\label{defn:k-graph}
A \emph{higher-rank graph} of rank $k$, or a {\em $k$-graph}, is a countable category $\Lambda$ equipped with a \emph{degree functor} $d\colon  \Lambda \to \N^k$, satisfying the \emph{factorization property}: if $d(\lambda) = m +n$, there exist unique morphisms $\mu, \nu \in \Lambda$ with $d(\mu) =m,  d(\nu) = n$ and $\lambda = \mu \nu$.
We define $d_i(\lambda) \in \N $ such that $d(\lambda) = \sum_{i=1}^{k} d_i(\lambda) e_i \in \N^k $.  The set of all $k$-graphs constitutes the objects of the category $k \text{\textendash} \mathfrak{graph}$, whose morphisms are degree-preserving morphisms between $k$-graphs, called \emph{$k$-graph morphisms}.

We identify the objects of $\Lambda$ with $\Lambda^0 = \{\lambda \in \Lambda\colon  d(\lambda) = 0\},$ and often refer to them as the \emph{vertices} of $\Lambda$.  More generally, for $ n \in \N^k, \  \Lambda^n := d^{-1}(n)$, and for $v, w \in \Lambda^0$ we have 
\[ v\Lambda^n = \{ \lambda \in \Lambda^n\colon  r(\lambda) = v \} \quad \text{ and } \quad v\Lambda^n w = \{ \lambda \in v\Lambda^n\colon s(\lambda) = w\}.\]
The sets $v \Lambda, v \Lambda w, \Lambda^n v, \Lambda v$ are defined analogously.

An important example of a $k$-graph is $\Omega^k$, i.e., the poset category $(\mathbb{N}^k, \leq)$, with the degree map defined to be $d(m,n) = n-m \in \N^k$ for any $m,n \in \N^k$ with $m \leq n$. For any $m \in \N^k$, we define the $k$-graph $\Omega^k_{\leq m}$ to be the poset category $\left(\left\{ n \in \mathbb{N}^k \colon  n \leq m \text{ coordinatewise} \right\}, \leq \right)$, with the degree map defined in the same way. 
\end{defn}

\begin{rmk}\label{rmk:Lambda-m-n}
	Given any $k$-graph $\Lambda$ and a morphism $\lambda \in \Lambda$, the factorization property provides a canonical $k$-graph morphism $\Omega^k_{\leq d(\lambda)} \to \Lambda$ (cf.~\cite[Remarks 2.2]{kp}). We write $\lambda(n,m)$ for the image of $(n,m) \in \Omega^k_{\leq d(\lambda)}$ under this morphism. For example, we have $\lambda (0, 0) = r(\lambda)$, $\lambda (0, d(\lambda)) = \lambda$, and $\lambda (d(\lambda), d(\lambda)) = s(\lambda)$. 
\end{rmk}

\subsection{Homological algebra}

In this subsection, we recall some notions in homological algebra. We will generally follow the setting and terminologies of \cite[Section~9]{Lueck89} and \cite[Section~2]{Lueck02}, whereby the notion of a free module over a base (Definition~\ref{def:free-module}) is particularly useful for us. 

Throughout this subsection $\Lambda$ denotes a small category. To be consistent with the notations for higher-rank graphs, we may use $v \Lambda w$ to denote the set of morphisms from $w$ to $v$, where $v,w \in \operatorname{Obj}\Lambda$. 

\begin{defn}\label{def:lambda-module}
	A \emph{right (resp., left) $\Lambda$-module} is a contravariant (resp., covariant) functor from $\Lambda$ to the category $\mathfrak{AbGrp}$ of abelian groups. 
	For $\lambda \in \Lambda$, the effect of the group homomorphism $\mathcal{M}(\lambda)$ on an element $m$ in its domain is often denoted by $m \cdot \lambda$ for a right $\Lambda$-module and $\lambda \cdot m$ for a left $\Lambda$-module. 
	
	If $M$ is an abelian group, write $M^\Lambda$ for the left/right $\Lambda$-module with $M^\Lambda(v) = M$ for all $v \in \text{Obj}\, \Lambda$ and $M^\Lambda(\lambda) = \operatorname{id}_M$ for all $\lambda \in \text{Mor}\, \Lambda$.
	
	A \emph{morphism}  of (left/right) $\Lambda$-modules (also called a  \emph{$\Lambda$-module map}) $\eta\colon \mathcal{M} \to \mathcal{N}$ is thus a natural transformation $(\eta_v)_{v\in \operatorname{Obj}\Lambda}$. For the sake of brevity, we sometimes simply write $\eta$ for an individual homomorphism $\eta_v$. We say a $\Lambda$-module map $\eta$ is injective (respectively, surjective or bijective/isomorphic) if $\eta_v$ is so for every $v \in \operatorname{Obj}\Lambda$.

	The collection of all $\Lambda$-module maps $\eta\colon \mathcal{M} \to \mathcal{N}$ is denoted by $\operatorname{Hom}_\Lambda (\mathcal{M}, \mathcal{N})$. It forms an abelian group under pointwise addition. Notice that in the special case when $\Lambda$ is the category containing only one object and one morphism, then each $\Lambda$-module is just an abelian group and $\operatorname{Hom}_\Lambda (\mathcal{M}, \mathcal{N})$ recovers $\operatorname{Hom} (\mathcal{M}, \mathcal{N})$, the abelian group of all group homomorphisms from $\mathcal{M}$ to $\mathcal{N}$. 
	
	Given a right $\Lambda$-module $\mathcal{M}$ and a left $\Lambda$-module $\mathcal{N}$, the \emph{$\Lambda$-tensor product} $\mathcal{M} \otimes_\Lambda \mathcal{N}$ is the abelian group generated by the product sets $\mathcal{M}(v) \times \mathcal{N}(v)$ for $v \in \operatorname{Obj} \Lambda$ \textemdash~whose elements $(m , n)$ are called elementary tensors and denoted by $m \otimes n$ \textemdash~and subject to the relations 
	\begin{itemize}
		\item $m \otimes n + m' \otimes n = (m + m') \otimes n $ and $m \otimes n + m \otimes n' = m \otimes (n + n') $ for any $v \in \operatorname{Obj} \Lambda$, $m,m' \in \mathcal{M}(v)$ and $n, n' \in \mathcal{N}(v)$, and 
		\item $(m \cdot \lambda) \otimes n = m \otimes (\lambda \cdot n)$ for any $\lambda \in \Lambda$, $m \in \mathcal{M}(r(\lambda))$ and $n \in \mathcal{N}(s(\lambda))$.  
	\end{itemize}

	The $\operatorname{Hom}_\Lambda$ and $\otimes_\Lambda$ constructions are closely related. To see this, we notice that $\operatorname{Hom} (-, -)$ forms a functor from $\mathfrak{AbGrp}^{\operatorname{op}} \times \mathfrak{AbGrp} \to \mathfrak{AbGrp}$ (also called a \emph{bifunctor}; here ``op'' stands for the opposite category). Thus, if we fix a left $\Lambda$-module $\mathcal{N}$ and an abelian group $G$, we obtain a right $\Lambda$-module $\operatorname{Hom} (\mathcal{N}, G)_\Lambda$ via composition of functors.  To be precise, 
\[ \operatorname{Hom}(\mathcal{N}, G)_\Lambda(v) = \operatorname{Hom}(\mathcal N(v), G),\]
and the $\Lambda$-module structure is given by $ \varphi \cdot \lambda(n) := 	\varphi(\lambda \cdot n),$ where $\varphi \in \operatorname{Hom}(\mathcal N, G)_\Lambda(r(\lambda))$ and  $n \in \mathcal N(s(\lambda))$.
	 For any right $\Lambda$-module $\mathcal{M}$, there is then a natural isomorphism of abelian groups
	\begin{equation}\label{eq:tensor-hom}
		\operatorname{Hom}\left(\mathcal{M} \otimes_\Lambda \mathcal{N}, G \right) \cong \operatorname{Hom}_\Lambda\left( \mathcal{M}, \operatorname{Hom} (\mathcal{N}, G)_\Lambda \right) \; .
	\end{equation}
	We reiterate that the three Hom's have related but different meanings. 
\end{defn}

\begin{defn}\label{def:projective-module}
	We say that a sequence 
	\[ \mathcal{M} \to \mathcal{N} \to \mathcal{P}\]
	of $\Lambda$-modules is \emph{exact} if the induced sequence $ \mathcal{M}(v) \to \mathcal{N}(v) \to \mathcal{P}(v)$  of abelian groups is exact for all $v \in \text{Obj}\, \Lambda$.
	
	We say that a (left/right) $\Lambda$-module $\mathcal{P}$ is \emph{projective} if every surjective $\Lambda$-module map  $\mathcal{M} \to \mathcal{P}$ has a right inverse $\mathcal{P} \to \mathcal{M}$.
\end{defn}

\begin{rmk}\label{rmk:module-over-ring}
	We point out that $\Lambda$-modules may also be viewed as modules over the ring $\Z \Lambda$. More precisely, the ring $\Z \Lambda$ is defined as the free abelian group generated by the morphisms in $\Lambda$ and equipped with the multiplication 
	\[
	(n \lambda) \cdot (m \mu) = 
	\begin{cases}
	(n m) (\lambda \mu) \, , & s(\lambda) = r(\mu) \\
	0 \, , & s(\lambda) \not= r(\mu)
	\end{cases}
	\]
	for any $\lambda, \mu \in \Lambda$ and $m,n \in \mathbb{Z}$. Any right $\Lambda$-module $\mathcal{M}$ gives rise to the right $\Z \Lambda$-module $|\mathcal{M}| := \bigoplus_{v \in \operatorname{Obj}\Lambda} \mathcal{M}(v)$, where the module structure is defined so that for any $v \in \operatorname{Obj}\Lambda$, $a \in \mathcal{M}(v)$, $n \in \mathbb{Z}$ and $\lambda \in \Lambda$, we have
	\[
	a \cdot (n \lambda) = 
	\begin{cases}
	n (a \cdot \lambda) \in \mathcal{M}(s (\lambda)) \, , & v = r(\lambda) \\
	0  \, , & v \not= r(\lambda)
	\end{cases}
	\; .
	\] 
	This assignment is unique since we may recover $\mathcal{M}$ from $|\mathcal{M}|$ by setting $\mathcal{M}(v) := |\mathcal{M}| \cdot (1v)$ for  $v \in \operatorname{Obj}\Lambda$. 
	A similar identification works for left $\Lambda$-modules. 
\end{rmk}

Thus we may think of a $\Lambda$-module $\mathcal M$ as the single abelian group $|\mathcal M|$ instead of a family of abelian groups, which has the advantage of simplifying some notations. 

The free $\Lambda$-modules which we now describe will play a central role in our arguments in this paper. 

\begin{defn}\label{def:free-module}	
	Let $\mathcal{P}$ be a right (resp., left) $\Lambda$-module and $(B_v)_{v \in \operatorname{Obj}}$ be a collection of sets where each $B_v$ is a set of elements in the abelian group $\mathcal{P}(v)$. 
	Then $\mathcal{P}$ is said to be \emph{free over $(B_v)_{v \in \operatorname{Obj}}$} and $(B_v)_{v \in \operatorname{Obj}}$ is called a \emph{base} for $\mathcal P$ 
	if for any right (resp., left) $\Lambda$-module $\mathcal{Q}$ and any tuple of maps $\left(\varphi_v\colon B_v \to \mathcal{Q}(v)\right)_{v\in \operatorname{Obj}\Lambda}$, there is exactly one $\Lambda$-module map $\Phi \colon \mathcal{P} \to \mathcal{Q}$ extending $(\varphi_v)_{v \in \operatorname{Obj}\Lambda}$, that is, for any $v \in \operatorname{Obj}\Lambda$ and $b \in B_v$, we have $\Phi(v)(b) = \varphi_v (b)$. 
	
	To simplify notation, we write $B = \bigsqcup_{v \in \operatorname{Obj}\Lambda}  B_v$ and often say $\mathcal{P}$ is \emph{free over $B$} or $B$ is a base for $\mathcal P$, when there is no danger of confusion. 
\end{defn}

We now describe a few alternate characterizations of free $\Lambda$-modules. For the sake of simplicity, we restrict ourselves to right $\Lambda$-modules, though analogous statements can be made for left $\Lambda$-modules. 

\begin{rmk}\label{rmk:free-module-isomorphisms}
	It is easy to see that in fact, a right $\Lambda$-module $\mathcal{P}$ is free over a base $B$ if and only if there is a \emph{natural} bijection
	\begin{equation}\label{eq:free-module-hom}
		\operatorname{Hom}_\Lambda \left(\mathcal{P}, \mathcal{M} \right) \overset{\cong}{\to} \prod_{v \in \operatorname{Obj}\Lambda} \prod_{b \in B_v} \mathcal{M}(v) \, , \quad \eta \mapsto \left( \eta_v (b)  \right)_{v \in \operatorname{Obj}\Lambda, ~ b \in B_v}  
	\end{equation}
	for all right $\Lambda$-modules $\mathcal{M}$. Note that this map is always a group homomorphism. 
	
	It thus follows from Equation~\eqref{eq:tensor-hom} that for a right $\Lambda$-module $\mathcal{P}$ which is free over a base $B$ and an arbitrary left $\Lambda$-module $\mathcal{N}$, there is a natural group isomorphism
	\begin{equation}\label{eq:free-module-tensor}
		\bigoplus_{v \in \operatorname{Obj}\Lambda} \bigoplus_{b \in B_v} \mathcal{N}(v) \overset{\cong}{\to}  \mathcal{P} \otimes_\Lambda \mathcal{N} \, , \quad  \left( a_{v,b} \right)_{v \in \operatorname{Obj}\Lambda, ~ b \in B_v} \mapsto \sum_{v \in \operatorname{Obj}\Lambda} \sum_{b \in B_v} b \otimes a_{v,b} \; .
	\end{equation}
\end{rmk}

Next we give an intrinsic description of free modules, using the following ``hom module'' as a building block.

\begin{defn}\label{def:Z-lambda-star}
Given $v \in \operatorname{Obj} \Lambda$, we denote by $\Z v\Lambda \ast$ the right $\Lambda$-module assigning,  
\begin{itemize}
	\item to each $w \in \operatorname{Obj} \Lambda$, the free abelian group $\Z v\Lambda w$ generated by the set $v\Lambda w$ of morphisms in $\Lambda$ with range $v$ and source $w$, and, 
	\item to each morphism $\lambda \in \Lambda$, the homomorphism $\Z v\Lambda r(\lambda) \to \Z v\Lambda s(\lambda)$ induced by right multiplication by $\lambda$. 
\end{itemize}
\end{defn}

\begin{rmk}\label{rmk:free-module-Yoneda}
	Given any right $\Lambda$-module $\mathcal{M}$ and subsets $B_v \subset \mathcal{M}(v)$ for $v \in \operatorname{Obj} \mathcal M$, define a right $\Lambda$-module 
	\begin{equation}\label{eq:canonical-free-module}
		\mathcal F := \bigoplus_{v\in \operatorname{Obj}\Lambda} \bigoplus_{b \in B_v} \Z v\Lambda\ast.
	\end{equation}
	We view each $B_v$ as a subset of $\mathcal F(v) = \bigoplus_{b \in B_v} \Z v\Lambda v$ by identifying each $b \in B_v$ with $1v$ in the corresponding summand. With this convention, one immediately sees that $\mathcal F$ is free over the base $(B_v)_{v \in \operatorname{Obj}}$ and thus we have a canonical $\Lambda$-module map from $\mathcal F$ to $\mathcal{M}$ that fixes $(B_v)_{v \in \operatorname{Obj}}$. More explicitly, this $\Lambda$-module map is given by
	\begin{align*}
		\mathcal F(w) = \bigoplus_{v\in \operatorname{Obj}\Lambda} \bigoplus_{b \in B_v} \Z v\Lambda w  &\to \mathcal{M} (w) \\
		(v, b, \lambda) &\mapsto b \cdot \lambda \; ,
	\end{align*}
	where the tuple $(v, b, \lambda)$, with $\lambda \in v \Lambda w$, denotes a typical generator in the summand indexed by $v \in \operatorname{Obj}\Lambda$ and $b \in B_v$. It follows from the Yoneda lemma that $\mathcal{M}$ is free over $(B_v)_{v \in \operatorname{Obj}}$ if and only if $\mathcal{M}$ is isomorphic to $\mathcal F$ via this $\Lambda$-module map. 
\end{rmk}

\begin{rmk}\label{rmk:free-module-projective}
	It is a direct consequence of Definition~\ref{def:free-module} that if a right $\Lambda$-module $\mathcal{P}$ is free over a base $B$, then it is projective. Indeed, for any surjective $\Lambda$-module map $\eta \colon \mathcal{M} \to \mathcal{P}$, any partial lift $B_v \to \mathcal{M}(v)$ of $\eta_v$ gives rise to a right inverse $\Lambda$-module map $\mathcal{P} \to \mathcal{M}$.
\end{rmk}

\begin{rmk}
	The reader may be familiar with the notion of freeness for modules over a ring $R$, i.e., being isomorphic to the direct-sum module $\bigoplus_{s \in S} R$ over a generating set $S$. This is less general than the above notion of free $\Lambda$-modules, in the sense that when we consider a free $\Lambda$-module as a module over the ring $\Z \Lambda$ as in Remark~\ref{rmk:module-over-ring}, it may not be free. For example, for any $v \in \operatorname{Obj}\Lambda$, the free right $\Lambda$-module $\Z v\Lambda \ast$ in Definition~\ref{def:Z-lambda-star} corresponds to the right $\Z \Lambda$-module $v (\Z \Lambda)$, that is, the right ideal generated by the idempotent $v \in \Z \Lambda$. This $\Z \Lambda$-module is projective but not free when $\Lambda$ has more than one object. 
\end{rmk}

In order to work with \mbox{(co-)}homology, we also need the basics of \mbox{(co-)}chain complexes of $\Lambda$-modules. 

\begin{defn}
	A \emph{chain complex} $\mathcal{C}_*$ of (left/right) $\Lambda$-modules is a bi-infinite sequence
	\[
		\ldots \overset{\partial_{n+1}}{\to} \mathcal{C}_{n} \overset{\partial_{n}}{\to} \mathcal{C}_{n-1} \overset{\partial_{n-1}}{\to} \ldots \overset{\partial_{1}}{\to} \mathcal{C}_{0} \overset{\partial_{0}}{\to} \mathcal{C}_{-1} \overset{\partial_{-1}}{\to} \ldots 
	\]
	of $\Lambda$-modules and $\Lambda$-module maps such that $\partial_{n} \circ \partial_{n+1} = 0$ for all $n$. Dually, a \emph{cochain complex} $\mathcal{C}^*$ of (left/right) $\Lambda$-modules is a bi-infinite sequence
	\[
	\ldots \overset{\delta_{n+1}}{\leftarrow} \mathcal{C}^{n+1} \overset{\delta_{n}}{\leftarrow} \mathcal{C}^{n} \overset{\delta_{n-1}}{\leftarrow} \ldots \overset{\delta_{0}}{\leftarrow} \mathcal{C}^{0} \overset{\delta_{-1}}{\leftarrow} \mathcal{C}^{-1} \overset{\delta_{-2}}{\leftarrow} \ldots 
	\]
	of $\Lambda$-modules and $\Lambda$-module maps such that $\delta_{n+1} \circ \delta_{n} = 0$ for all $n$.  
	We also allow the index $n$ to range over only a subset of $\mathbb{Z}$ (e.g., typically, $\mathbb{N}$), in which case, it is understood that $\mathcal{C}_n = 0$ or $\mathcal{C}^n = 0$ wherever it is not specified. A \mbox{(co-)}chain complex is \emph{acyclic} if the sequence is exact at every place. 
	
	Given two chain complexes $\mathcal{C}_*$ and $\mathcal{D}_*$ of $\Lambda$-modules, a \emph{chain map} $f_*$ from $\mathcal{C}_*$ to $\mathcal{D}_*$ consists of a sequence of $\Lambda$-module maps $f_n \colon  \mathcal{C}_n \to \mathcal{D}_n$ that makes the following diagram commutative
	\[
		\xymatrix{
				\ldots \ar[r]^{\partial_{n+2}} & \mathcal{C}_{n+1} \ar[d]^{f_{n+1}} \ar[r]^{\partial_{n+1}} & \mathcal{C}_{n} \ar[d]^{f_{n}} \ar[r]^{\partial_{n}} & \mathcal{C}_{n-1} \ar[d]^{f_{n-1}} \ar[r]^{\partial_{n-1}} & \ldots \\
				\ldots \ar[r]^{\partial_{n+2}} & \mathcal{D}_{n+1} \ar[r]^{\partial_{n+1}} & \mathcal{D}_{n} \ar[r]^{\partial_{n}} & \mathcal{D}_{n-1} \ar[r]^{\partial_{n-1}} & \ldots \\
			}
	\]
	We say two chain maps $f_*, g_* \colon \mathcal{C}_* \to \mathcal{D}_*$ are \emph{homotopic} and write $f_* \sim_{\operatorname{h}} g_*$ if there is a sequence of $\Lambda$-module maps $h_n \colon  \mathcal{C}_n \to \mathcal{D}_{n+1}$ such that $f_n - g_n = \partial_{n+1} h_n + h_{n-1} \partial_n$. Two chain complexes $\mathcal{C}_*$ and $\mathcal{D}_*$ are \emph{homotopy equivalent} if there are chain maps $f_* \colon \mathcal{C}_* \to \mathcal{D}_*$ and $g_* \colon \mathcal{D}_* \to \mathcal{C}_*$ such that $f_* \circ g_* \sim_{\operatorname{h}} \operatorname{id}_{\mathcal{D}_*}$ and $g_* \circ f_* \sim_{\operatorname{h}} \operatorname{id}_{\mathcal{C}_*}$. In particular, a chain complex $\mathcal{C}_*$ is \emph{contractible} if it is homotopy equivalent to the zero chain complex, or equivalently, if there are $\Lambda$-module maps $h_n \colon  \mathcal{C}_n \to \mathcal{C}_{n+1}$ such that $\operatorname{id}_{\mathcal{C}_*} = \partial_{n+1} h_n + h_{n-1} \partial_n$. 
	
	We can similarly define cochain maps between two cochain complexes, homotopy between two cochain maps, homotopy equivalence between two cochains, and contractible cochains.
	
	Chain complexes of $\Lambda$-modules, together with chain maps between them, form a category, which we denote by $\Lambda\textendash\mathfrak{chain}$. Similarly, the category of cochain complexes of $\Lambda$-modules, together with cochain maps between them, form the category $\Lambda\textendash\mathfrak{cochain}$. When $\Lambda$ is the category of only one object and one morphism, we recover the categories $\mathfrak{chain}$ and $\mathfrak{cochain}$ of ordinary chain complexes and cochain complexes, respectively. 
\end{defn}

\begin{rmk}\label{rmk:chain-module}
	It is clear that a chain complex of right (resp., left) $\Lambda$-modules is the same as a contravariant (resp., covariant) functor from $\Lambda$ to $\mathfrak{chain}$. An analogous statement holds for cochain complexes. 
\end{rmk}

\subsection{Categorical \mbox{(co-)}homology}

In this subsection, we review the construction of \mbox{(co-)}homology for small categories.

\begin{defn}
	The \emph{homology} of a chain complex $(\mathcal{C}_n, \partial_n)$ of abelian groups (i.e., modules over the one-element category) is the sequence of abelian groups 
	\[H_n(\mathcal{C}_*) := \operatorname{Ker} \partial_n / \operatorname{Im} \partial_{n+1}.\]
	Dually, the \emph{cohomology} of a cochain complex $(\mathcal{C}^n, \delta_n)$ of abelian groups is the sequence of abelian groups $H^n(\mathcal{C}^*) := \operatorname{Ker} \delta_n / \operatorname{Im} \delta_{n-1}$. 
\end{defn}
Thus a \mbox{(co-)}chain complex is acyclic if and only if all of its \mbox{(co-)}homology groups vanish. Two \mbox{(co-)}chain complexes that are homotopy equivalent have the same \mbox{(co-)}homology groups.

\begin{defn}\label{def:categorical-free-res}
	A \emph{resolution}\footnote{What we call a resolution is in fact a left resolution. Since we will not need the dual notion of right resolutions in this paper, we drop the adjective ``left''. } of a (left/right) $\Lambda$-module $\mathcal{M}$ is an exact sequence 
	\[
		\ldots \overset{\partial_{n+1}}{\to} \mathcal{R}_{n} \overset{\partial_{n}}{\to} \mathcal{R}_{n-1} \overset{\partial_{n-1}}{\to} \ldots \overset{\partial_{1}}{\to} \mathcal{R}_{0} \overset{\partial_{0}}{\to} \mathcal{M} \overset{0}{\to} 0
	\]
	of $\Lambda$-modules and $\Lambda$-module maps. It is \emph{projective} (respectively, \emph{free}) if $\mathcal R_n$ is projective  (respectively, free over a base) for all $n$. It is convenient to view a resolution of $\mathcal{M}$ as an acyclic chain complex $\chaincomplex{ \mathcal{R}_n, \partial_n }_{n\in \Z}$, where $\mathcal{R}_{-1} = \mathcal{M}$ and $\mathcal{R}_{n} = 0$ for $n < -1$. 
\end{defn}

Remark~\ref{rmk:free-module-projective} implies that any free resolution is also projective. 

\begin{rmk}\label{rmk:categorical-resolution} 
	There is a canonical way to produce a free resolution $\chaincomplex{ \Presolution_n, \partial_n^{\Presolution}}_{n\in \Z}$ for the constant right $\Lambda$-module $\Z^\Lambda$ (cf. \cite[Definition~2.3]{gillaspy-kumjian} or \cite[page~2567]{xu}). To be more precise, for any $w \in \operatorname{Obj} \Lambda$ and any $n \in \N$, we define:
	\begin{itemize}
		\item $\Lambda^{*(n+1)}w$ to be the collection of composable $(n+1)$-tuples $(\lambda_0, \ldots, \lambda_n)$ with $s(\lambda_n) = w$,
		\item $\mathcal P_n(w) := \Z \Lambda^{*(n+1)}w$, the free abelian group generated by $\Lambda^{*(n+1)}w$, 
		\item $\partial_n^{\mathcal P} \colon \mathcal P_n \to \mathcal P_{n-1}$, for $n>0$, by prescribing
		\[ 
		\left(\partial_n^{\mathcal P} \right)_v (\lambda_0, \ldots, \lambda_n) = (\lambda_1, \ldots, \lambda_n) + \sum_{i=1}^n (-1)^i (\lambda_0, \ldots, \lambda_{i-1} \lambda_i, \ldots, \lambda_n) 
		\]
		on generators $(\lambda_0, \ldots, \lambda_n) \in \Lambda^{*(n+1)}v$, 
		\item and $\partial_0^{\mathcal P} : \Z \Lambda w \to \Z^\Lambda(w) = \Z$ by $\partial_0^{\mathcal P}(\lambda) = 1$. 
	\end{itemize}	
	For each $n \in \N$, right multiplication makes $\mathcal P_n$ into a right $\Lambda$-module:
	\[ 
		\mathcal P_n(\lambda)( \lambda_0, \ldots, \lambda_n) := (\lambda_0, \ldots, \lambda_{n-1}, \lambda_n \lambda) \text{ for any } ( \lambda_0, \ldots, \lambda_n) \in \Lambda^{*(n+1)}r(\lambda).
	\]
The fact that the $\Lambda$-module structure ignores the first $n$ components of each generating tuple motivated our choice of enumeration of the modules $\Presolution_n$: these first $n$ components carry the homological information in $\mathcal P_n$, whereas the last component only carries the $\Lambda$-module action.
	
	Moreover, using Definition~\ref{def:Z-lambda-star}, 
	we have a $\Lambda$-module isomorphism
	\[
		\mathcal P_n \cong \bigoplus_{v \in \operatorname{Obj}\Lambda} \; \bigoplus_{( \lambda_0, \ldots, \lambda_{n-1})  \in \Lambda^{*n}v } \Z  v  \Lambda  *
	\]
	given on the generators by sending $(\lambda_0, \ldots, \lambda_n) $ to the element $\lambda_n $ in the copy of $\Z v \Lambda * $ 
	 indexed by $(\lambda_0, \ldots, \lambda_{n-1}) \in \Lambda^{* n}$.  
	Thus, it follows from Remark~\ref{rmk:free-module-Yoneda} that $\mathcal P_n$ is free over $\bigsqcup_{v \in \operatorname{Obj}\Lambda}  \Lambda^{*n}v $.  
	Acyclicity was established in \cite[Proposition 2.4]{gillaspy-kumjian};  the chain homotopy $\chaincomplex{h_n \colon \Presolution_n \to \Presolution_{n+1} }_{n\in \Z}$ satisfying $\operatorname{id}_{\Presolution_*} = \partial_{n+1}^{\Presolution} h_n + h_{n-1} \partial_n^{\Presolution}$ is given by 
	\[
		\left(h_{-1}\right)_w ( 1 ) =  w \in \Z \Lambda^{*1} w
	\]
	and
	\[
		\left(h_{n}\right)_w ( \lambda_0, \ldots, \lambda_n) = (-1)^{n+1} ( \lambda_0, \ldots, \lambda_n, w)
	\]
	for any $n \in \N$, $w \in \operatorname{Obj}\Lambda$ and $(\lambda_0, \ldots, \lambda_n) \in \Lambda^{*(n+1)}w$. 
	Combining all these, we conclude that $\chaincomplex{ \Presolution_n, \partial_n^{\Presolution}}_{n\in \Z}$ is a free resolution for the right $\Lambda$-module $\Z^\Lambda$. We may also write $\mathcal P_n(\Lambda)$ for $\mathcal P_n$ to emphasize the $k$-graph $\Lambda$. 
\end{rmk}

Any two projective resolutions of $\Z^\Lambda$ are homotopy equivalent (cf.~\cite[Theorem~III.6.1]{maclane}), making the following definition independent of the choice of a projective resolution. 

\begin{defn}\label{def:categorical-cohlogy}
	Let $\Lambda$ be a small category, let $\mathcal{M}$ be a right $\Lambda$-module and $\mathcal{N}$ a left $\Lambda$-module. Pick a projective resolution 
	\[
		\ldots \overset{\partial_{n+1}}{\to} \mathcal{R}_{n} \overset{\partial_{n}}{\to} \mathcal{R}_{n-1} \overset{\partial_{n-1}}{\to} \ldots \overset{\partial_{2}}{\to} \mathcal{R}_{1} \overset{\partial_{1}}{\to} \mathcal{R}_{0} \overset{\partial_{0}}{\to} \Z^\Lambda \overset{0}{\to} 0
	\]
	for the constant right $\Lambda$-module $\Z^\Lambda$. Then
	\begin{itemize}
		\item the \emph{cohomology} $H^*(\Lambda, \M)$ of $\Lambda$ with coefficient $\mathcal{M}$ is defined to be the cohomology of the cochain complex 
		\[
			\ldots \overset{\partial_{n+1}^*}{\leftarrow} \Hom_\Lambda(\mathcal R_{n}, \M) \overset{\partial_{n}^*}{\leftarrow}
			\ldots \overset{\partial_{2}^*}{\leftarrow} \Hom_\Lambda(\mathcal R_1, \M) \overset{\partial_{1}^*}{\leftarrow} \Hom_\Lambda(\mathcal R_0, \M) \overset{0}{\leftarrow} 0 \overset{0}{\leftarrow} \ldots
		\]
		where $\partial_{n+1}^*(\eta) = \eta \circ \partial_{n+1}$ for any $\eta \in \Hom_\Lambda(\mathcal R_n, \M)$, and 
		\item the \emph{homology} $H_*(\Lambda, \mathcal N)$ of $\Lambda$ with coefficient $\mathcal{N}$ is defined to be the homology of the chain complex 
		\[
			\ldots \overset{\partial_{n+1} \otimes \operatorname{id}}{\to} \mathcal{R}_n \otimes_\Lambda \mathcal{N} \overset{\partial_{n} \otimes \operatorname{id}}{\to}  \ldots \overset{\partial_{2} \otimes \operatorname{id}}{\to} \mathcal{R}_1 \otimes_\Lambda \mathcal{N} \overset{\partial_{1} \otimes \operatorname{id}}{\to} \mathcal{R}_0 \otimes_\Lambda \mathcal{N} \overset{0}{\to} 0 \overset{0}{\to} \ldots
		\]
		where $(\partial_n \otimes \operatorname{id} ) (m \otimes k) = \partial_n (m) \otimes k$ for any elementary tensor $m \otimes k$. 
	\end{itemize}
	Notice that the term $\mathbb{Z}^{\Lambda}$ in the projective resolution is dropped when calculating the \mbox{(co-)}homology. If this term is included instead, we obtain \emph{reduced \mbox{(co-)}homology}. 
	
	In the case where $\Lambda$ is a $k$-graph, in order to distinguish the above definitions from the cubical homology and cohomology (cf.~Section~\ref{sec:cubical}), we refer to the above as the \emph{categorical \mbox{(co-)}homology} of $\Lambda$.  
\end{defn}

\begin{rmk}\label{rmk:categorical-cohlogy-reformulations}
	In Proposition 2.8 of \cite{gillaspy-kumjian}, the categorical cohomology is reformulated as the cohomology group of the so-called \emph{categorical $n$-cochains}. An analogous reformulation works for homology, too. We indicate how to see these identifications 
	through the perspective of free modules as in Definition~\ref{def:free-module}. Recall from Remark~\ref{rmk:categorical-resolution}  that  the modules  $\mathcal P_n$ are free over $(B_v)_{v\in \operatorname{Obj}\Lambda}$ with
	\[ B_v = \left\{ ( \lambda_0, \ldots, \lambda_{n-1}) \in \Lambda^{*n}v \right\}.\]
	Consequently, for any $n \in \N$, any right $\Lambda$-module $\mathcal{M}$ and any left $\Lambda$-module $\mathcal{N}$, Remark~\ref{rmk:free-module-isomorphisms} gives isomorphisms
	\begin{align*}
		\operatorname{Hom}_\Lambda \left(\mathcal P_n, \mathcal{M} \right) & \overset{\cong}{\to} \prod_{( \lambda_0, \ldots, \lambda_{n-1}) \in \Lambda^{*n}} \mathcal{M}(s(\lambda_{n-1})) \\
		f & \mapsto \left( f ( \lambda_0, \ldots, \lambda_{n-1})  \right)_{( \lambda_0, \ldots, \lambda_{n-1}) \in \Lambda^{*n}} 
	\end{align*}
	and
	\begin{align*}
		\bigoplus_{( \lambda_0, \ldots, \lambda_{n-1}) \in \Lambda^{*n}} \mathcal{N}(s(\lambda_{n-1})) & \overset{\cong}{\to}  \mathcal P_n \otimes_\Lambda \mathcal{N} \\  
		\left( a_{( \lambda_0, \ldots, \lambda_{n-1})} \right)_{\Lambda^{*n}} & \mapsto \sum_{( \lambda_0, \ldots, \lambda_{n-1}) \in \Lambda^{*n}} ( \lambda_0, \ldots, \lambda_{n-1}) \otimes a_{( \lambda_0, \ldots, \lambda_{n-1})} \; .
	\end{align*}
	We thus define
	\[
		C^n ( \Lambda, \mathcal{M}) := \prod_{( \lambda_0, \ldots, \lambda_{n-1}) \in \Lambda^{*n}} \mathcal{M}(s(\lambda_{n-1})) \; ,
	\]
	the group of \emph{$\mathcal{M}$-valued categorical $n$-cochains}, and 
	\[
		C_n ( \Lambda, \mathcal{N}) := \bigoplus_{( \lambda_0, \ldots, \lambda_{n-1}) \in \Lambda^{*n}} \mathcal{N}(s(\lambda_{n-1})) \; ,
	\]
	the group of \emph{$\mathcal{N}$-valued categorical $n$-chains}. Under these identifications, the coboundary maps $\delta_{n}$ for cochains satisfy
	\begin{align*}
		\delta_{n} ( f) (\lambda_0, \ldots, \lambda_{n})  = &\ f(\lambda_1, \ldots, \lambda_{n}) + (-1)^{n+1} f(\lambda_0, \ldots, \lambda_{n-1}) \cdot \lambda_{n} \\
		& + \sum_{i=1}^{n} (-1)^i f(\lambda_0, \ldots, \lambda_{i-1} \lambda_i, \ldots, \lambda_{n}) 
	\end{align*}
	for any $( \lambda_0, \ldots, \lambda_{n}) \in \Lambda^{*(n+1)}$ and $f \in C^n ( \Lambda, \mathcal{M})$, viewed as a section, where we used the right multiplication by $\lambda_{n}$ given by the module structure. Writing a typical element in $C_n ( \Lambda, \mathcal{N})$ as a pair $\big(  {( \lambda_0, \ldots, \lambda_{n-1}), a } \big)$ where $( \lambda_0, \ldots, \lambda_{n-1}) \in \Lambda^{*n}$ and $a \in \mathcal{N}({s(\lambda_{n-1})} )$, the boundary maps $\partial_{n}$ for chains satisfy
	\begin{align*}
		\partial_{n} \big(  {( \lambda_0, \ldots, \lambda_{n-1}), a } \big) = &\ \big({(\lambda_1, \ldots, \lambda_{n-1})}, a \big) + (-1)^n  \big({(\lambda_0, \ldots, \lambda_{n-2})}, \lambda_{n-1} \cdot  a\big)  \\
		& + \sum_{i=1}^{n-1} (-1)^i \big(  {(\lambda_0,\ldots, \lambda_{i-1} \lambda_i, \ldots, \lambda_{n-1}), a } \big) \;,
	\end{align*}
	where we used the left multiplication by $\lambda_{n-1}$ given by the module structure.  
	With these definitions, the categorical \mbox{(co-)}homology becomes just the \mbox{(co-)}homology of the categorical \mbox{(co-)}chain groups together with the above \mbox{(co-)}boundary maps.  
	
	In particular, for a $k$-graph $\Lambda$ and an abelian group $M$, the cohomology groups $H^n(\Lambda, M^{\Lambda})$ as in Definition~\ref{def:categorical-cohlogy} agree with the categorical cohomology groups of $\Lambda$ as defined in \cite[Definition 3.5]{kps-twisted}.   Moreover, the  categorical \mbox{(co-)}homology of a $k$-graph enjoys stronger functoriality properties than one might expect: 
	any functor $\varphi: \Lambda \to \Lambda'$ between two $k$-graphs will induce homomorphisms
on the categorical \mbox{(co-)}homology groups, regardless of whether $\varphi$ is a $k$-graph morphism (i.e., respects 
the degree functors).
\end{rmk}

\section{Cubical \mbox{(co-)}homology with coefficients}\label{sec:cubical}
In this section, we review the treatment of cubical \mbox{(co-)}homology for $k$-graphs, which was introduced by Kumjian, Pask, and Sims in  \cite{kps-hlogy}, before going on to explain how to incorporate general $\Lambda$-modules as coefficients. The main motivation for introducing cubical homology and cohomology is their ease for computation, as compared to categorical \mbox{(co-)}homology. 

Throughout this subsection, $\Lambda$ denotes a $k$-graph. 
 
\begin{defn}\label{def:cubical}
	As in \cite[Sections 2 and 3]{kps-hlogy} and using the notations in Definition~\ref{defn:k-graph}, for $n \in \N$, we define the set of \emph{$n$-cubes} in a $k$-graph $\Lambda$ by
	\[Q_n(\Lambda) := \left\{ \eta \in \Lambda\colon \sum_{i=1}^k d_i(\eta) = n \text{ and } d_i(\eta) \in \{0,1\} \text{ for } 1 \leq i \leq k \right\}.\]
	Observe that $Q_0(\Lambda) = \Lambda^0$. 
	For $n \in \Z \setminus \N$, we set $Q_n(\Lambda) = \varnothing$. We also write $Q = \bigcup_{n \in \Z} Q_n$, the set of all cubes.
	
	For $n \in \mathbb{N}$ and $\lambda \in Q_n(\Lambda)$ with $d(\lambda) = \sum_{j=1}^n e_{c_j}$ so that $1 \leq c_1 < \cdots < c_n \leq k$, using the notations in Remark~\ref{rmk:Lambda-m-n}, we define the \emph{front and back faces}
	\begin{equation}F^0_j(\lambda) = \lambda(0, d(\lambda) - e_{c_j}), \quad F^1_j(\lambda) = \lambda(e_{c_j}, d(\lambda)).\label{eq:cubical-faces}
	\end{equation}
	In other words, $\lambda$ can be decomposed both as $F^0_j(\lambda) S_j(\lambda)$ and as $R_j(\lambda) F^1_j(\lambda)$ such that $d(S_j(\lambda)) = d(R_j(\lambda)) = e_{c_j}$. In fact, $S_j(\lambda) = \lambda(d(\lambda) - e_{c_j}, d(\lambda))$ and $R_j(\lambda) = \lambda(0,  e_{c_j})$. Note that in \cite{kps-hlogy}, $R_j(\lambda)$ and $S_j(\lambda)$ are denoted by $\alpha_j$ and $\beta_j$, respectively. 
\end{defn}

\newcommand{\ZQ}{{\Z Q}}
\newcommand{\ZtildeQ}{{\Z \widetilde{Q}}}

\begin{defn}\label{def:cubical-chain-complex}
The \emph{cubical chain complex} $ {\ZQ}_*(\Lambda)$ consists of the free abelian groups $\chaincomplex{\Z Q_n(\Lambda)}_{n\in \Z}$ and boundary maps $\partial_n\colon \ZQ_n(\Lambda) \to \ZQ_{n-1}(\Lambda)$ defined, for $n \geq 1$, by
\begin{equation} 
\partial_n(\lambda) = \sum_{j=1}^n \sum_{l=0}^1 (-1)^{j+l} F^l_j(\lambda),
\label{eq:cubical-bdry}
\end{equation}
for any $n$-cube $\lambda$.

The \emph{augmented cubical chain complex} ${\ZtildeQ}_*(\Lambda)$ is defined in the same way as above, with
\[
	\ZtildeQ_n (\Lambda) = \begin{cases}
		\ZQ_n(\Lambda) \, , & n \not= -1 \\
		\Z \, , & n = -1
	\end{cases} \; ,
\]
and the boundary map $\partial_0 \colon  \mathbb{Z} Q_0(\Lambda) \to \mathbb{Z}$ maps any $v \in \Lambda^0$ to $1$ in $\mathbb{Z}$. 

It is straightforward to verify that $\partial_n \circ \partial_{n+1} = 0$ for all $n$ for either chain complex. Thus, the \emph{cubical homology group} of $\Lambda$, written $H_{n}^{\operatorname{cub}}(\Lambda)$, is defined as the homology of $\chaincomplex{\Z Q_n(\Lambda), \partial_n }_n$, while the \emph{reduced cubical homology group} of $\Lambda$, written $\widetilde{H}_{n}^{\operatorname{cub}}(\Lambda)$, is defined as the homology of $\chaincomplex{\Z \widetilde Q_n(\Lambda), \partial_n}_n$.
\end{defn}

\begin{rmk}\label{rmk:reduced-homology}
	It is easy to see that for any nontrivial $k$-graph, the (unreduced) cubical homology differs from the reduced cubical homology only by adding a copy of $\mathbb{Z}$ as a direct summand to the $0$-th homology group: 
	\[H_{0}^{\operatorname{cub}}(\Lambda) \cong \widetilde{H}_{0}^{\operatorname{cub}}(\Lambda) \oplus \mathbb{Z}\quad \text{ and } \quad H_{n}^{\operatorname{cub}}(\Lambda) \cong \widetilde{H}_{n}^{\operatorname{cub}}(\Lambda) \ \forall \ n \not= 0.\]
\end{rmk}

\begin{rmk}\label{rmk:ZQ-functoriality}
	We point out that the assignment $\Lambda \mapsto \chaincomplex{ \Z Q_n(\Lambda) }_n$ (resp., $\chaincomplex{\Z \widetilde{Q}_n(\Lambda) }_n$) constitutes a covariant functor $\Z Q_*$ (resp., $\Z \widetilde{Q}_*$) from the category $k \text{\textendash} \mathfrak{graph}$ to the category $\mathfrak{chain}$, since a $k$-graph morphism $\varphi \colon  \Lambda \to \Lambda'$ preserves degree and thus induces maps $(\varphi_*)_{n} \colon \Z Q_n(\Lambda) \to \Z Q_n(\Lambda')$ that intertwine the boundary maps $\partial_n$. 
\end{rmk}

More generally, mimicking the definitions in Remark~\ref{rmk:categorical-cohlogy-reformulations}, we can incorporate a coefficient left $\Lambda$-module into the definition of the cubical homology.

\begin{constr}\label{constr:cubical-homology-pre}
	Let $\Lambda$ be a $k$-graph and let $\mathcal{N}$ be a left $\Lambda$-module. We define a chain complex $\chaincomplex{C_n^{\operatorname{cub}} ( \Lambda, \mathcal{N}), \partial_n \otimes \operatorname{id}}_n$ by setting, for each $n \in \N$, 
	\[
		C_n^{\operatorname{cub}} ( \Lambda, \mathcal{N}) = \bigoplus_{\lambda \in Q_n (\Lambda)} \mathcal{N}({s(\lambda)} )
	\]
	i.e., the linear span of pairs $(\lambda , a)$ for $\lambda \in Q_n(\Lambda)$ and $a \in \mathcal{N}({s(\lambda)} )$, and defining the differential map
	\[
		\partial_{n} \otimes \operatorname{id} \colon  C_{n}^{\operatorname{cub}} ( \Lambda, \mathcal{N}) \to C_{n-1}^{\operatorname{cub}} ( \Lambda, \mathcal{N}) \; 
	\] 
	such that for $\lambda \in Q_n(\Lambda)$ and $a \in \mathcal{N}({s(\lambda)} )$,
	\[
		\partial_{n} \otimes \operatorname{id} (\lambda , a) = \sum_{i=1}^n (-1)^{i} \left( (F_i^0(\lambda) ,S_i(\lambda) \cdot a)   -  (F_i^1(\lambda),a) \right) ,
	\]
	where $S_i(\lambda) \cdot a \in \mathcal N(r( S_i(\lambda)))$ arises from the left action of $\Lambda$ on $\mathcal N$.
\end{constr}
One readily verifies that $(\partial_n \otimes \operatorname{id}) \circ (\partial_{n+1} \otimes \operatorname{id}) = 0$ for all $n$. 

\begin{defn}\label{def:cubical-homology}
	Let $\Lambda$ be a $k$-graph and let $\mathcal{N}$ be a left $\Lambda$-module. The {\em cubical homology $H_n^{\operatorname{cub}}(\Lambda,\mathcal N)$ of $\Lambda$ with coefficients in} $\mathcal{N}$  is defined to be the homology of the chain complex $\chaincomplex{C_n^{\operatorname{cub}} ( \Lambda, \mathcal{N}), \partial_n\otimes \operatorname{id} }_{n\in \N}$. 
\end{defn}

Similarly, we can define the cubical cohomology with a right $\Lambda$-module as its coefficient. 
\begin{constr}
\label{constr:cubical-cohomology-pre}
	Let $\Lambda$ be a $k$-graph and let $\mathcal{M}$ be a right $\Lambda$-module. 
	We define a cochain complex $\chaincomplex{ C^n_{\operatorname{cub}} ( \Lambda, \mathcal{M}), \delta_n }_n$ by setting, for each $n \in \N$, 
	\[
		C^n_{\operatorname{cub}} ( \Lambda, \mathcal{M}) = \prod_{\lambda \in Q_n (\Lambda)} \mathcal{M}({s(\lambda)} )
	\]
	and defining the differential map
	$
		\delta_{n-1}  \colon  C^{n-1}_{\operatorname{cub}} ( \Lambda, \mathcal{M}) \to C^{n}_{\operatorname{cub}} ( \Lambda, \mathcal{M}) \; 
	$
by
\[
 \delta_{n-1} ( f ) (\lambda) = \sum_{i=1}^n (-1)^{i} \left(f ( F_i^0(\lambda) ) \cdot S_i(\lambda) -   f ( F_i^1(\lambda) )\right) 
\]
for any $\lambda \in Q_n{(\Lambda)}$ and $f \in C^{n-1}_{\operatorname{cub}} ( \Lambda, \mathcal{M})$, considered as a tuple of elements $f(\mu) \in \mathcal{M}({s(\mu)})$ for varying $\mu \in Q_{n-1}(\Lambda)$. Here we used the right multiplication by $S_i(\lambda)$ as prescribed by the right module structure.
\end{constr}
One readily verifies that $\delta_{n+1} \circ \delta_{n} = 0$ for all $n$. 

\begin{defn}\label{def:cubical-cohomology}
 Let $\Lambda$ be a $k$-graph and let $\mathcal{M}$ be a right $\Lambda$-module. The {\em cubical cohomology $H^n_{\operatorname{cub}}(\Lambda,\mathcal M)$ of $\Lambda$ with coefficients in} $\mathcal{M}$  is defined to be the cohomology of the cochain complex $\chaincomplex{C^n_{\operatorname{cub}} ( \Lambda, \mathcal{M}), \delta_n }_{n\in \N}$.
\end{defn}

\begin{rmk}\label{rmk:cubical-vanish}
	Observe that for a $k$-graph $\Lambda$, the chain complex
	\[ \chaincomplex{C_n^{\operatorname{cub}} ( \Lambda, \mathcal{N}), \partial_n \otimes \operatorname{id}}_{n\in \N}\]
	 and the cochain complex $\chaincomplex{C^n_{\operatorname{cub}} ( \Lambda, \mathcal{M}), \delta_n }_{n\in \N}$ both vanish for $n>k$, regardless of the coefficient modules. Consequently, $H_n^{\operatorname{cub}}(\Lambda,\mathcal N)$ and $H^n_{\operatorname{cub}}(\Lambda,\mathcal M)$ always vanish when $n>k$, as well as when $n<0$. 
\end{rmk}

\begin{rmk}
	\label{rmk:cubical-coh-constant-module}
	When $\mathcal{M} = M^\Lambda$ is the constant module generated by an abelian group $M$,  the cochain complex $\chaincomplex{C^n_{\operatorname{cub}} ( \Lambda, \mathcal{M}), \delta_n }_{n\in \N}$ is isomorphic to $\Hom(\Z Q_n(\Lambda) , M)$. This implies that Definition \ref{def:cubical-cohomology} agrees with the definition of cubical cohomology given in  \cite[Definition 7.2]{kps-hlogy}. 
\end{rmk}

\section{The cubical free resolution}
\label{sec:new-free-res}
In this section, we construct the cubical free resolution of a $k$-graph, which plays a central role in both of our proofs showing the isomorphism between cubical \mbox{(co-)}homology groups and categorical \mbox{(co-)}homology groups. 
\begin{defn}\label{def:future-path-graph}
	Let $\Lambda$ be a $k$-graph. For any vertex $v \in \Lambda^0$, we define $\widetilde{\Lambda} (v)$, \emph{the future path $k$-graph of $\Lambda$ rooted at $v$} as follows:
	\begin{itemize}
		\item As a category, $\widetilde{\Lambda} (v)$ is the coslice category of $\Lambda$ at $v$. In other words, this is the small category whose set of objects is $\Lambda v$ (the morphisms in $\Lambda$ originating from $v$), and the set of morphisms is $\Lambda^{*2} v$ (the composable 2-tuples in $\Lambda$ which originate from $v$).  For any morphism $(\eta, \mu) \in \Lambda^{*2} v$, its source $\tilde{s}(\eta, \mu)$ is $\mu$ and its range $\tilde r(\eta , \mu)$ is $\eta \mu$. 
		\item The degree of a morphism $(\eta, \mu)$, written $\tilde{d}(\eta, \mu )$, is given by $d(\eta)$. 
	\end{itemize}
\end{defn}

\begin{rmk}\label{rmk:future-path-graph} The $k$-graphs $\widetilde{\Lambda} (v)$ have the following properties. 
	\begin{enumerate}
		\item\label{rmk:future-path-graph:forgetful} There is a forgetful functor $F \colon  \widetilde{\Lambda} (v) \to \Lambda$, given by $F(\eta, \mu ) = \eta$. The factorization property in $\Lambda$ ensures $F$ is faithful. It also preserves the degree of morphisms since  $\tilde{d} = d \circ F$. 
		\item\label{rmk:future-path-graph:degree} The degree map $\tilde{d}$ satisfies the factorization property and thus $\widetilde{\Lambda} (v)$ is a bona fide $k$-graph. Indeed, suppose $(\eta, \lambda) \in \widetilde{\Lambda}(v)$ has $\widetilde{d}(\eta, \lambda) = d(\eta) = m +n$.  The factorization property for $\Lambda$ implies that $\eta$ factors uniquely as $\eta = \mu \nu$ where $d(\mu)= m, d(\nu) = n$.  Thus, 
		\[ (\eta, \lambda) = (\mu, \nu \lambda)(\nu, \lambda)\]
		gives the factorization in $\widetilde{\Lambda}(v)$, unique by the faithfulness of $F$. 
		\item\label{rmk:future-path-graph:initial} The vertex $v$ is \emph{initial} in $\widetilde{\Lambda} (v)$, that is, for any vertex $\lambda $ in $\widetilde{\Lambda} (v)$, there is only one morphism \textemdash\  namely $(\lambda, v)$ \textemdash\  from $v$ to $\lambda$.
		\item\label{rmk:future-path-graph:functor-1} Each morphism $\lambda$ in $\Lambda$ induces a functor
		\[
		\widetilde{\Lambda} (\lambda) \colon  \widetilde{\Lambda} (r(\lambda)) \to \widetilde{\Lambda} (s(\lambda)) \;, \quad (\eta, \mu) \mapsto (\eta, \mu \lambda) \; .
		\]
		It is a $k$-graph morphism by the following commutative diagram
		\[
		\xymatrix{
			\widetilde{\Lambda} (r(\lambda)) \ar[rr]^{\widetilde{\Lambda}(\lambda)} \ar[rd]^F &&  \widetilde{\Lambda} (s(\lambda)) \ar[ld]_F \\
			&\Lambda \ar[r]^d & \mathbb{N}
		}
		\]
		Thus $\widetilde{\Lambda}$ is a contravariant functor from $\Lambda$ to  $k \text{\textendash} \mathfrak{graph}$. 
		\item\label{rmk:future-path-graph:functor-2} A $k$-graph morphism $\varphi \colon  \Lambda \to \Lambda'$ induces a functor 
		\[
		\widetilde{\varphi}(v) \colon  \widetilde{\Lambda}(v) \to \widetilde{\Lambda'} (\varphi(v)) \;, \quad (\eta, \mu) \mapsto (\varphi(\eta), \varphi(\mu)) \; ,
		\]
		which is a $k$-graph morphism by the following commutative diagram:
		\[
		\xymatrix{
			\widetilde{\Lambda} (v) \ar[rr]^{\widetilde{\varphi}(v)} \ar[d]^F &&  \widetilde{\Lambda'} (\varphi(v)) \ar[d]_F \\
			{\Lambda} \ar[rr]^{\varphi} \ar[rd]^d &&  {\Lambda'}  \ar[ld]_d \\
			& \mathbb{N}
		}
		\]
		\item\label{rmk:future-path-graph:natural-morphism} In fact, the above functors form a natural transformation \[
		\widetilde{\varphi} \colon  \widetilde{\Lambda} \to \widetilde{\Lambda'}  \varphi
		\]
		between functors from $\Lambda$ to $k \text{\textendash} \mathfrak{graph}$, thanks to the following commutative diagram (for each $\lambda \in \Lambda$):
		\[
		\xymatrix{
			\widetilde{\Lambda} (r(\lambda)) \ar[rr]^{\widetilde{\varphi}(r(\lambda))} \ar[d]^{\widetilde{\Lambda} (\lambda)} &&  \widetilde{\Lambda'} (\varphi(r(\lambda))) \ar[d]^{\widetilde{\Lambda'} (\varphi(\lambda))} \\
			\widetilde{\Lambda} (s(\lambda)) \ar[rr]^{\widetilde{\varphi}(s(\lambda))}  &&  \widetilde{\Lambda'} (\varphi(s(\lambda))) 
		}
		\]
	\end{enumerate}
\end{rmk}

\begin{example}
	If $\Lambda$ is the $1$-graph associated to the figure-8 directed graph, made up of one single vertex $v$ and two edges, then the future path $1$-graph $\widetilde{\Lambda}(v)$ can be identified with the rooted binary tree, while the functor $\widetilde{\Lambda}$ maps any path in $\Lambda$ to a graph endomorphism of the rooted binary tree. In general, a future path $1$-graph is always a rooted tree. 
\end{example}

\begin{example}
	If $\Lambda = \mathbb{N}^k$ as a $k$-graph (with the degree map being the identity), then the future path $k$-graph $\widetilde{\Lambda}(0)$ is $\Omega^k$, i.e., the poset category $(\mathbb{N}^k, \leq)$ viewed as a $k$-graph. More generally, a future path $k$-graph can be thought of as a branched version of $\Omega^k$, just as a rooted tree can be thought of as a branched version of $\Omega^1$. 
\end{example}

Next we consider cubical complexes over future path $k$-graphs. 

\begin{constr}\label{constr:cubical-resolution}
	Recall from Remark~\ref{rmk:ZQ-functoriality} that $\Z Q_*$ and $\Z \widetilde{Q}_*$ are covariant functors from the category $k \text{\textendash} \mathfrak{graph}$ of $k$-graphs to the category $\mathfrak{chain}$ of chain complexes (of abelian groups). Thus composing them with the contravariant functor $\widetilde{\Lambda} \colon  \Lambda \to k \text{\textendash} \mathfrak{graph}$ produces contravariant functors
	\[
	\Z Q_*(\widetilde{\Lambda}) \text{ and } \Z \widetilde{Q}_*(\widetilde{\Lambda})  \colon  \Lambda \to \mathfrak{chain} \; ,
	\]
	which, by Remark~\ref{rmk:chain-module}, can be viewed as chain complexes of right $\Lambda$-modules. 
	The chain complex $\chaincomplex{\Z \widetilde{Q}_n(\widetilde{\Lambda}) }_n$ is the same as $\chaincomplex{\Z Q_n(\widetilde{\Lambda}) }_n$ except at $n = -1$, where instead of being trivial, we have $\Z \widetilde{Q}_{-1}(\widetilde{\Lambda}) = \Z^\Lambda$, the $\Z$-valued constant right $\Lambda$-module. 
	\end{constr}

As in the case of the categorical free resolution (Remark \ref{rmk:categorical-resolution}), in any generating tuple $(\eta, \lambda)$ of $\Z Q_n(\widetilde{\Lambda})(v)$, where $\eta \in Q_n(\Lambda)$ and $\lambda \in  s(\eta) \Lambda v$, only the second and final entry $\lambda$ is affected by the right $\Lambda$-module structure of $\Z Q_n(\widetilde{\Lambda})$.  Intuitively speaking, the first entry $\eta$ carries the (cubical) homological information.

\begin{prop}\label{prop:cubical-module-free}
	For any $n \in \Z$, the right $\Lambda$-module $\Z Q_n(\widetilde{\Lambda})$ is 
	free over the base $\left\{\big(\eta, s(\eta)\big)\colon \eta \in Q_n(\Lambda) \right\}$ in the sense of Definition~\ref{def:free-module}. 
\end{prop}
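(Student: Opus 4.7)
The plan is to reduce the claim to a direct application of Remark~\ref{rmk:free-module-Yoneda} by exhibiting an explicit $\Lambda$-module isomorphism between $\Z Q_n(\widetilde{\Lambda})$ and the canonical free module built from the proposed base. The case $n < 0$ is trivial: then $Q_n(\Lambda) = \varnothing$, the module $\Z Q_n(\widetilde{\Lambda})$ is the zero module, and it is vacuously free over the empty base. So the substance lies in $n \geq 0$.

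First I would unpack what $\Z Q_n(\widetilde{\Lambda})(v)$ is for each $v \in \operatorname{Obj}\Lambda$. By the definitions of $\widetilde{\Lambda}(v)$ and of $n$-cubes, a morphism in $\widetilde{\Lambda}(v)$ is a pair $(\eta, \mu) \in \Lambda^{*2} v$ of degree $d(\eta)$, and it lies in $Q_n(\widetilde{\Lambda}(v))$ precisely when $\eta \in Q_n(\Lambda)$. Sorting these pairs by their first coordinate yields the set-level decomposition
\[
Q_n(\widetilde{\Lambda}(v)) \;=\; \bigsqcup_{\eta \in Q_n(\Lambda)} \{\eta\} \times s(\eta)\Lambda v,
\]
and hence a natural identification
\[
\Z Q_n(\widetilde{\Lambda})(v) \;\cong\; \bigoplus_{\eta \in Q_n(\Lambda)} \Z\, s(\eta)\Lambda v.
\]

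Next I would check that this identification is compatible with the right $\Lambda$-module structure. By Remark~\ref{rmk:future-path-graph}, the action of $\lambda \in \Lambda$ is induced by the functor $\widetilde{\Lambda}(\lambda)\colon (\eta, \mu) \mapsto (\eta, \mu\lambda)$; it leaves the first coordinate $\eta$ untouched and acts as right multiplication on the second coordinate $\mu$. Comparing with Definition~\ref{def:Z-lambda-star}, this matches precisely the right $\Lambda$-module structure on $\bigoplus_{\eta \in Q_n(\Lambda)} \Z\, s(\eta)\Lambda \ast$. The pointwise identifications above therefore assemble into a $\Lambda$-module isomorphism
\[
\Z Q_n(\widetilde{\Lambda}) \;\cong\; \bigoplus_{\eta \in Q_n(\Lambda)} \Z\, s(\eta)\Lambda \ast,
\]
under which the candidate base element $(\eta, s(\eta)) \in \Z Q_n(\widetilde{\Lambda})(s(\eta))$ corresponds to the generator $1 \cdot s(\eta)$ of the summand indexed by $\eta$. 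An appeal to Remark~\ref{rmk:free-module-Yoneda} then concludes that $\Z Q_n(\widetilde{\Lambda})$ is free over $\{(\eta, s(\eta)) \colon \eta \in Q_n(\Lambda)\}$.

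I do not anticipate any serious obstacle; this is essentially bookkeeping. The only point requiring a little care is matching the $\Lambda$-module structure (rather than just the underlying abelian group) across the two sides, which hinges on the observation that the right action of $\Lambda$ on $\widetilde{\Lambda}$ touches only the second coordinate of $(\eta,\mu)$. Once this is in hand, Remark~\ref{rmk:free-module-Yoneda} does all the rest.
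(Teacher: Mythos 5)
Your proof is correct and follows essentially the same route as the paper: both decompose $\Z Q_n(\widetilde{\Lambda})(v)$ according to the first coordinate $\eta \in Q_n(\Lambda)$, identify the result with $\bigoplus_{\eta \in Q_n(\Lambda)} \Z\, s(\eta)\Lambda\,\ast$ as right $\Lambda$-modules, and conclude via Remark~\ref{rmk:free-module-Yoneda}. Your explicit verification that the $\Lambda$-action only touches the second coordinate, and your separate treatment of $n<0$, are points the paper leaves implicit, but the argument is the same.
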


\begin{proof}
	Fix $n \in \N$. For any $v \in \Lambda^0$,  Construction~\ref{constr:cubical-resolution} implies that the abelian group $\Z Q_n(\widetilde{\Lambda}) (v)$ is freely generated by
	\[ 
	\left\{ (\eta, \lambda) \colon  \eta \in Q_n(\Lambda), \lambda \in  s(\eta) \Lambda v \right\} \; .
	\]
	Hence the $\Lambda$-module map 
	\begin{align*}
		\bigoplus_{\eta \in Q_n(\Lambda)}  \Z \: s(\eta){\Lambda}  \ast &\to \Z Q_n(\widetilde{\Lambda}) \\
		(\lambda_\eta)_{\eta\in Q_n(\Lambda)} &\mapsto \sum_{\eta \in Q_n(\Lambda)} (\eta, \lambda_\eta)
	\end{align*}
	is an isomorphism. The proposition follows by Remark~\ref{rmk:free-module-Yoneda}. 
\end{proof}

It follows that $\Z Q_n(\widetilde{\Lambda})$ is closely related to the cubical \mbox{(co-)}homology groups, which are defined respectively through the cubical chain complex $C_*^{\operatorname{cub}} ( \Lambda, \mathcal{N})$ in Construction~\ref{constr:cubical-homology-pre}  and the cubical cochain complex $C^*_{\operatorname{cub}} ( \Lambda, \mathcal{M})$ in Construction~\ref{constr:cubical-cohomology-pre}. 

\begin{prop}\label{prop:cubical-chain-iso}
	For any $k$-graph $\Lambda$, any left $\Lambda$-module $\mathcal{N}$ and right $\Lambda$-module $\mathcal{M}$, there are isomorphisms
	\[
	C_*^{\operatorname{cub}} ( \Lambda, \mathcal{N}) \cong \Z Q_*(\widetilde{\Lambda})\otimes_\Lambda \mathcal{N}
	\]
	and
	\[
	C^*_{\operatorname{cub}} ( \Lambda, \mathcal{M}) \cong \Hom_\Lambda\left(\Z Q_*(\widetilde{\Lambda}), \mathcal{M}\right)
	\]
	of \mbox{(co-)}chain complexes. 
\end{prop}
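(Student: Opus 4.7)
The plan is to assemble the isomorphism in two stages: first upgrade the module-level freeness in Proposition~\ref{prop:cubical-module-free} to a natural identification of the underlying (co)chain groups, and then verify that the boundary and coboundary maps match under these identifications. Only the second step requires any genuine computation.

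For the first stage, fix $n \in \N$. By Proposition~\ref{prop:cubical-module-free}, $\Z Q_n(\widetilde{\Lambda})$ is free as a right $\Lambda$-module over the base $\{(\eta,s(\eta))\colon \eta \in Q_n(\Lambda)\}$. Applying the natural isomorphisms~\eqref{eq:free-module-hom} and~\eqref{eq:free-module-tensor} from Remark~\ref{rmk:free-module-isomorphisms} immediately yields
\[
 \Hom_\Lambda(\Z Q_n(\widetilde{\Lambda}), \mathcal{M}) \;\cong\; \prod_{\eta \in Q_n(\Lambda)} \mathcal{M}(s(\eta)) \;=\; C^n_{\operatorname{cub}}(\Lambda, \mathcal{M})
\]
and
\[
 \Z Q_n(\widetilde{\Lambda}) \otimes_\Lambda \mathcal{N} \;\cong\; \bigoplus_{\eta \in Q_n(\Lambda)} \mathcal{N}(s(\eta)) \;=\; C_n^{\operatorname{cub}}(\Lambda, \mathcal{N}).
\]
Explicitly, under the tensor isomorphism the elementary tensor $(\eta,\lambda) \otimes a$, with $a\in\mathcal{N}(v)$, is sent to the pair $(\eta, \lambda\cdot a)$ in the summand indexed by $\eta$; under the Hom isomorphism a $\Lambda$-module map $f$ is sent to the tuple $(f(\eta,s(\eta)))_{\eta\in Q_n(\Lambda)}$.

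The heart of the argument is to check compatibility with the (co)boundaries. The key computation is to identify the faces of the $n$-cube $(\eta,\lambda) \in Q_n(\widetilde{\Lambda}(v))$. Writing $\eta = F^0_j(\eta)\,S_j(\eta) = R_j(\eta)\,F^1_j(\eta)$, the composition in the coslice category $\widetilde{\Lambda}(v)$ gives the factorizations
\[
 (\eta,\lambda) = \bigl(F^0_j(\eta),\, S_j(\eta)\lambda\bigr)\bigl(S_j(\eta),\lambda\bigr) = \bigl(R_j(\eta),\, F^1_j(\eta)\lambda\bigr)\bigl(F^1_j(\eta),\lambda\bigr),
\]
so that by~\eqref{eq:cubical-faces} the two families of faces in $\widetilde{\Lambda}(v)$ are $F^0_j(\eta,\lambda) = (F^0_j(\eta), S_j(\eta)\lambda)$ and $F^1_j(\eta,\lambda) = (F^1_j(\eta), \lambda)$. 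The $F^1_j$ face lies in the base, while the $F^0_j$ face factors as the base element $(F^0_j(\eta), s(F^0_j(\eta)))$ acted on by $S_j(\eta)$ in the right $\Lambda$-module structure.

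For the chain isomorphism, apply the cubical boundary~\eqref{eq:cubical-bdry} to the base element $(\eta, s(\eta)) \otimes a$ and push the action $S_j(\eta)$ across the tensor using $(m\cdot\lambda)\otimes n = m \otimes (\lambda\cdot n)$; the resulting expression matches term-by-term the formula for $\partial_n\otimes\operatorname{id}$ in Construction~\ref{constr:cubical-homology-pre}. For the cochain isomorphism, dually, evaluate the pullback $\delta_{n-1}f = f\circ\partial_n$ on the base element $(\eta,s(\eta))$ and use the right action $\varphi\cdot\lambda$ on $\Hom(\mathcal{N},G)_\Lambda$ style (here, the identification $f(\mu\cdot \lambda) = f(\mu)\cdot\lambda$ for a $\Lambda$-module map); this recovers the formula in Construction~\ref{constr:cubical-cohomology-pre}. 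The only place where care is needed is keeping the signs $(-1)^j$ (as opposed to $(-1)^{j+l}$) consistent after telescoping $l\in\{0,1\}$; once the face computation above is in hand, the rest is purely bookkeeping. Since the isomorphisms of groups are natural in $n$ and commute with the (co)boundaries, they assemble into the claimed (co)chain isomorphisms.
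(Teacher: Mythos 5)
Your proposal is correct and takes essentially the same route as the paper: freeness of $\Z Q_n(\widetilde{\Lambda})$ over the base $\{(\eta,s(\eta))\}$ (Proposition~\ref{prop:cubical-module-free}) combined with Remark~\ref{rmk:free-module-isomorphisms} gives the group-level identifications, and the intertwining of the (co)boundaries is checked by evaluating on base elements, just as in the paper's computation. Your explicit face identification $F^0_j(\eta,\lambda)=(F^0_j(\eta),S_j(\eta)\lambda)$ and $F^1_j(\eta,\lambda)=(F^1_j(\eta),\lambda)$ in $\widetilde{\Lambda}(v)$ is precisely the fact the paper's displayed computation uses implicitly, so the two arguments coincide in substance.
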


\begin{proof}
	By Proposition~\ref{prop:cubical-module-free}, for each $n \in \Z$, the right $\Lambda$-module $\Z Q_n(\widetilde{\Lambda})$ is free over the base $\left\{ \big(\eta, s(\eta)\big)\colon \eta \in Q_n(\Lambda) \right\}$. Thus by Remark~\ref{rmk:free-module-isomorphisms}, there are group isomorphisms
	\[
	\Psi_n \colon \Hom_\Lambda\left(\Z Q_n(\widetilde{\Lambda}), \mathcal{M}\right) \overset{\cong}{\to} \prod_{\eta \in Q_n (\Lambda)} \mathcal{M}({s(\eta)} ) \, , \quad f \mapsto \left( f_{s(\eta)} (\eta, s(\eta)) \right)_{\eta \in Q_n (\Lambda)} 
	\]
	and 
	\[
	\Phi_n \colon \bigoplus_{\eta \in Q_n (\Lambda)} \mathcal{N}({s(\eta)} ) \overset{\cong}{\to}  \Z Q_n(\widetilde{\Lambda})\otimes_\Lambda \mathcal{N} \, , \quad  (a_\eta)_{\eta \in Q_n(\Lambda)} \mapsto \sum_{\eta \in Q_n (\Lambda)} (\eta, s(\eta)) \otimes a_\eta   \; .
	\]
	By definition, we have $C^n_{\operatorname{cub}} ( \Lambda, \mathcal{M}) = \prod_{\eta \in Q_n (\Lambda)} \mathcal{M}({s(\eta)} )$ and $C_n^{\operatorname{cub}} ( \Lambda, \mathcal{N}) = \bigoplus_{\eta \in Q_n (\Lambda)} \mathcal{N}({s(\eta)} )$. It remains to show that these isomorphisms intertwine the differential maps. To this end, we compute, for any $f \in \Hom_\Lambda(\Z Q_n( \widetilde{\Lambda}), \mathcal M)$ and $\eta \in Q_{n+1}$,
	\begin{align*}
	&\ \left(\delta_{n} \circ \Psi_n\right)(f)(\eta) \\
	&= \sum_{i=1}^n (-1)^i \left( \Psi_n(f) \left(F^0_i(\eta) \right) \cdot S_i(\eta) -  \Psi_n(f) \left(F^1_i(\eta) \right) \right)\\
	& = \sum_{i=1}^n (-1)^i \left( f_{s(F_i^0(\eta))} \big(F^0_i(\eta), s(F^0_i(\eta)) \big) \cdot S_i(\eta) - f_{s(\eta)} (F^1_i(\eta), s(\eta)) \right)\\
	& = \sum_{i=1}^n (-1)^i \Big( f_{s(\eta)} (F^0_i(\eta), S_i(\eta) ) - f_{s(\eta)} (F^1_i(\eta), s(\eta)) \Big)\\
	& = f_{s(\eta)} \left(\sum_{i=1}^n (-1)^i \Big( \left( F^0_i(\eta), S_i(\eta)\right)  - \left(F^1_i(\eta), s(\eta) \right)\Big)\right)\\
	& = \left(f \circ \partial_n\right)_{s(\eta)} (\eta, s(\eta)) = \Psi_{n+1} \left(f \circ \partial_n\right) (\eta) \\
	& = \left( \Psi_{n+1} \circ \partial_n^* \right)(f) (\eta) \; .
	\end{align*}
	This proves $\delta_{n} \circ \Psi_n = \Psi_{n+1} \circ \partial_n^*$ for all $n \in \Z$ and yields the desired cochain isomorphism.

	A similar computation establishes that 
	$\left(\partial_{n} \otimes \operatorname{id}_{\mathcal{N}} \right) \circ \Phi_n = \Phi_{n-1} \circ \partial_{n}$ for all $n \in \Z$.
\end{proof}

To justify the term ``cubical free resolution'', we will need to show that the chain complex 
\begin{equation}
\label{eq:cubical-free-res}
\dots \stackrel{\partial_{n+1}}{\to} \Z Q_{n+1}(\widetilde{\Lambda}) \stackrel{\partial_n}{\to} \Z Q_n(\widetilde{\Lambda}) \stackrel{\partial_{n-1}}{\to} \dots \to \Z Q_0(\widetilde{\Lambda}) \stackrel{\partial_0}{\to} \Z^\Lambda \to 0
\end{equation}
of $\Lambda$-modules is acyclic. This will occupy more than half of the paper. We will provide two proofs of this fact, one topological and the other algebraic.

\section{Initial vertices and contractibility}\label{sec:initial-vertices}

In this section, we show that if a $k$-graph has an {initial vertex} in the category-theoretical sense, then all of its reduced cubical homology groups vanish. This is a crucial step in our first proof of the isomorphism between cubical \mbox{(co-)}homology and categorical \mbox{(co-)}homology. 

\begin{defn}\label{def:initial-vertex}
	Let $\Lambda$ be a $k$-graph. A vertex $\alpha \in \operatorname{Obj} \Lambda$ is an \emph{initial vertex} if for any vertex $v \in \operatorname{Obj} \Lambda$, the set $v\Lambda \alpha$ contains a unique element, which we denote by $\alpha_v$. For any $\lambda \in \Lambda$, we also define $\alpha_\lambda$ to be $\alpha_{r(\lambda)}$. 
\end{defn}

\begin{example}
	A $1$-graph with an initial vertex is nothing but a rooted tree, where all the edges point away from the root.
\end{example}

\begin{example}
	As discussed in Remark~\ref{rmk:future-path-graph}\eqref{rmk:future-path-graph:initial}, the faithfulness of the forgetful functor $F: \widetilde \Lambda \to \Lambda$ implies  that for any future path $k$-graph $\widetilde{\Lambda}(v)$, $v$ is  an initial vertex. 
\end{example}

Let $\alpha$ be an initial vertex in a $k$-graph $\Lambda$.  We evidently have $\alpha_\alpha = \alpha$. Moreover, for any $\lambda \in \Lambda$, we have $\alpha_{\alpha_\lambda} = \alpha_\lambda$ and $\alpha_\lambda = \alpha_{r(\lambda)} = \lambda \alpha_{s(\lambda)}$. This last equation implies that if $\lambda, \mu \in v \Lambda w$ we must have 
\[ d(\lambda) = d(\mu) = d(\alpha_v) - d(\alpha_w)\]
and hence, by the factorization property, $\lambda = \mu$.

In order to  study the reduced cubical homology groups of $k$-graphs with initial vertices, we invoke the notion of the topological realization $|\Lambda|$ of a $k$-graph $\Lambda$, introduced in \cite{kalisz-kumjian-quigg-sims}. It is a topological space whose homology groups coincide with the cubical homology groups of $\Lambda$ by \cite[Theorem~6.3]{kps-hlogy}.

\begin{defn}\cite[Definition~3.2]{kalisz-kumjian-quigg-sims}
	\label{def:top-realiz}
	For each $n=(n_1, \ldots, n_k) \in \N^k$, we denote by $[0, n]$ the subset of $\R^k$ given by 
	\[ [0, n] = \{ x = (x_1, \ldots, x_k) \in \R^k\colon 0 \leq x_i \leq n_i\}.\]
	Then  $|\Lambda|$ is the quotient space
	\[ |\Lambda| = \left. \left( \bigsqcup_{\lambda \in \Lambda} \{ \lambda\} \times [0, d(\lambda)] \right) \right/ \sim\]
	under the equivalence relation
	\begin{align*}
	(\mu, x) \sim (\nu, y) \Longleftrightarrow \ &  \mu(\lfloor x \rfloor, \lceil x \rceil) = \nu(\lfloor y \rfloor, \lceil y \rceil) \text{ and } x - \lfloor x \rfloor = y - \lfloor y \rfloor 
	\end{align*}
	using the notation of Remark~\ref{rmk:Lambda-m-n}.  Here $\lfloor-\rfloor$ and $\lceil - \rceil$ denote, respectively, the floor and ceiling functions applied to every coordinate of a tuple. 
	We denote a typical element in $|\Lambda|$ by $[(\lambda, x)]$, where $\lambda\in \Lambda$ and $x \in [0, d(\lambda)]$. 
\end{defn}

In plain language, we associate to each morphism in $\Lambda$ a hyper-rectangle whose size  equals $d(\lambda)$, and if two morphisms $\mu, \nu$ overlap on a hyper-rectangle $\lambda$, we glue the hyper-rectangles for $\mu$ and $\nu$ together along $\lambda$. 

\begin{lemma}[\cite{kalisz-kumjian-quigg-sims} Lemma 3.3]
\label{lem:top-realiz-relation}
	For any $\lambda \in \Lambda$, $\mu \in \Lambda r(\lambda)$, $\nu \in s(\lambda)\Lambda$, and $x \in [0, d(\lambda)]$, we have $(\mu \lambda \nu, x + d(\mu) ) \sim (\lambda, x)$. 
\end{lemma}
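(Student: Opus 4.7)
The plan is to verify directly that the pair $(\mu\lambda\nu, x + d(\mu))$ satisfies both defining conditions of the equivalence relation in Definition~\ref{def:top-realiz} relative to $(\lambda, x)$. First I would check well-definedness: since $x \in [0, d(\lambda)]$, we have $x + d(\mu) \in [d(\mu), d(\mu)+d(\lambda)] \subseteq [0, d(\mu\lambda\nu)]$, so $(\mu\lambda\nu, x+d(\mu))$ does represent a point of $|\Lambda|$.

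Next, because $d(\mu) \in \N^k$ is integer-valued, applying the floor and ceiling componentwise gives $\lfloor x + d(\mu) \rfloor = \lfloor x \rfloor + d(\mu)$ and $\lceil x + d(\mu) \rceil = \lceil x \rceil + d(\mu)$. The fractional-part condition in the equivalence relation,
\[
(x + d(\mu)) - \lfloor x + d(\mu) \rfloor = x - \lfloor x \rfloor,
\]
then follows immediately.

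The real content is the morphism condition
\[
\mu\lambda\nu\bigl(\lfloor x \rfloor + d(\mu),\; \lceil x \rceil + d(\mu)\bigr) = \lambda\bigl(\lfloor x \rfloor,\; \lceil x \rceil\bigr).
\]
To see this, I would invoke Remark~\ref{rmk:Lambda-m-n} and the factorization property. The morphism $\mu\lambda\nu$ factors uniquely as a product of three morphisms of degrees $d(\mu)$, $d(\lambda)$, $d(\nu)$, and by the uniqueness assertion of the factorization property this factorization must coincide with $\mu\cdot\lambda\cdot\nu$. Consequently, the canonical $k$-graph morphism $\Omega^k_{\leq d(\mu\lambda\nu)} \to \Lambda$ attached to $\mu\lambda\nu$, restricted to the sub-poset $[d(\mu), d(\mu)+d(\lambda)]$, agrees (after the translation $t \mapsto t - d(\mu)$) with the canonical $k$-graph morphism $\Omega^k_{\leq d(\lambda)} \to \Lambda$ attached to $\lambda$. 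Applying this to the pair $(\lfloor x\rfloor + d(\mu), \lceil x\rceil + d(\mu))$ on the left and $(\lfloor x\rfloor, \lceil x\rceil)$ on the right produces the desired equality.

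The only possible obstacle is a careful bookkeeping of the compatibility between the canonical morphism $\Omega^k_{\leq d(\mu\lambda\nu)} \to \Lambda$ and its restrictions to the three sub-boxes $[0,d(\mu)]$, $[d(\mu),d(\mu)+d(\lambda)]$, $[d(\mu)+d(\lambda),d(\mu\lambda\nu)]$. This is a direct application of the uniqueness clause of the factorization property (since both sides provide factorizations of the same morphism with matching degrees), so once it is recorded, the lemma follows without further computation.
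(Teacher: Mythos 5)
Your proposal is correct. Note that the paper itself gives no proof of this lemma \textemdash\ it is imported verbatim from \cite{kalisz-kumjian-quigg-sims} (Lemma 3.3) \textemdash\ and your direct verification, namely observing that $\lfloor x + d(\mu)\rfloor = \lfloor x\rfloor + d(\mu)$ and $\lceil x + d(\mu)\rceil = \lceil x\rceil + d(\mu)$ because $d(\mu) \in \N^k$ is integral, and then using the uniqueness clause of the factorization property to identify $\mu\lambda\nu\bigl(\lfloor x\rfloor + d(\mu), \lceil x\rceil + d(\mu)\bigr)$ with $\lambda\bigl(\lfloor x\rfloor, \lceil x\rceil\bigr)$, is exactly the standard argument underlying the cited result.
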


\begin{prop}
	\label{prop:top-realiz-contr}
	If a $k$-graph $\Lambda$ has an initial vertex $\alpha$, then its topological realization $|{\Lambda}|$ is contractible.
\end{prop}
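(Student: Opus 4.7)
My plan is to construct an explicit contracting homotopy $H \colon |\Lambda| \times [0,1] \to |\Lambda|$. Because $\alpha$ is initial, every $\lambda \in \Lambda$ admits a unique ``completion to $\alpha$'': the composite $\alpha_\lambda := \lambda\,\alpha_{s(\lambda)}$ is the unique element of $r(\lambda)\Lambda\alpha$, and $d(\alpha_\lambda) = d(\lambda) + d(\alpha_{s(\lambda)}) \geq d(\lambda)$ componentwise. So the hyper-rectangle $[0, d(\lambda)]$ sits as a sub-rectangle inside $[0, d(\alpha_\lambda)]$ with one corner at the origin, while the opposite corner $d(\alpha_\lambda)$ represents $\alpha$ in $|\Lambda|$. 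The natural guess is therefore
\[
H\bigl([(\lambda, x)], t\bigr) \;:=\; \bigl[\bigl(\alpha_\lambda,\ (1-t)\,x + t\,d(\alpha_\lambda)\bigr)\bigr],
\]
which, viewed inside a single rectangle, simply interpolates linearly from $x$ to the corner corresponding to $\alpha$.

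The two endpoints will be easy to verify: at $t = 0$, applying Lemma~\ref{lem:top-realiz-relation} with $\mu = r(\lambda)$ and $\nu = \alpha_{s(\lambda)}$ gives $(\alpha_\lambda, x) = (\lambda\,\alpha_{s(\lambda)}, x) \sim (\lambda, x)$, so $H(-,0) = \operatorname{id}_{|\Lambda|}$; and at $t = 1$, the canonical representative of $(\alpha_\lambda, d(\alpha_\lambda))$ is $(\alpha, 0)$, so $H(-,1)$ is the constant map at $[(\alpha, 0)]$.

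The hard part will be showing that $H$ is well-defined on the quotient. Since any equivalence $(\lambda, x) \sim (\mu, y)$ in the definition of $|\Lambda|$ factors through the canonical representative $(\sigma, f) := (\lambda(\lfloor x\rfloor, \lceil x\rceil), x - \lfloor x\rfloor)$ via two moves supplied by Lemma~\ref{lem:top-realiz-relation}, it suffices to verify, for all $\mu \in \Lambda r(\lambda)$, $\nu \in s(\lambda)\Lambda$, $x \in [0, d(\lambda)]$, and $t \in [0, 1]$, that
\[
\bigl[\bigl(\alpha_{\mu\lambda\nu},\ (1-t)(x + d(\mu)) + t\,d(\alpha_{\mu\lambda\nu})\bigr)\bigr] \;=\; \bigl[\bigl(\alpha_\lambda,\ (1-t)\,x + t\,d(\alpha_\lambda)\bigr)\bigr].
\]
The key simplification will come from uniqueness of initial morphisms: $\mu\,\alpha_\lambda$ is a morphism in $r(\mu)\Lambda\alpha$ and so must equal $\alpha_{r(\mu)} = \alpha_{\mu\lambda\nu}$, yielding $d(\alpha_{\mu\lambda\nu}) = d(\mu) + d(\alpha_\lambda)$. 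The left-hand representative then simplifies to $\bigl(\mu\,\alpha_\lambda,\ d(\mu) + (1-t)x + t\,d(\alpha_\lambda)\bigr)$, and a second application of Lemma~\ref{lem:top-realiz-relation} — now with base morphism $\alpha_\lambda$, pre-factor $\mu$, and post-factor the identity at $s(\alpha_\lambda) = \alpha$ — collapses this to the right-hand side.

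Continuity of $H$ will then be formal: the lift $\tilde H \colon \bigsqcup_{\lambda \in \Lambda} \{\lambda\} \times [0, d(\lambda)] \times [0, 1] \to |\Lambda|$ defined by the same formula is continuous on each summand (an affine map followed by the quotient projection), and since $[0, 1]$ is compact Hausdorff, the product $|\Lambda| \times [0, 1]$ carries the quotient topology induced from this disjoint union. Hence $\tilde H$ descends to the required continuous $H$, exhibiting a strong deformation retraction of $|\Lambda|$ onto the point $[(\alpha, 0)]$ and proving contractibility.
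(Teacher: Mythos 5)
Your proof is correct and follows essentially the same route as the paper: the identical homotopy $[(\lambda,x)] \mapsto [(\alpha_\lambda, (1-t)x + t\,d(\alpha_\lambda))]$, with well-definedness reduced via the canonical representative $\lambda(\lfloor x\rfloor,\lceil x\rceil)$ to moves of Lemma~\ref{lem:top-realiz-relation}, which is exactly how the paper argues (using $\eta = \lambda(\lfloor x\rfloor,\lceil x\rceil)$ and the relation $d(\alpha_{\mu\lambda\nu}) = d(\mu)+d(\alpha_\lambda)$, matching your uniqueness computation $\mu\,\alpha_\lambda = \alpha_{r(\mu)}$). Your only addition is the slightly more careful justification of continuity via local compactness of $[0,1]$, which the paper leaves implicit.
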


\begin{proof}	
	Consider the continuous map
	\begin{align*}
		\widetilde{H} \colon \bigsqcup_{\lambda \in \Lambda} \left( [0,1] \times \left\{ \lambda \right\} \times [0, d(\lambda)] \right) & \to |{\Lambda}|  \\
		(t, \lambda, x) & \mapsto \left[\left(\alpha_\lambda, x + t (d(\alpha_\lambda) - x) \right)\right] \; . 
	\end{align*}
	We claim that $\widetilde H$ descends to a continuous map $H: [0,1] \times |\Lambda| \to |\Lambda|$.
	To see that $ H$ is well defined, suppose $(\lambda, x) \sim (\mu, y)$, i.e., $\lambda(\lfloor x \rfloor, \lceil x \rceil) = \mu(\lfloor y \rfloor, \lceil y \rceil)$ and $x - \lfloor x \rfloor = y - \lfloor y \rfloor =: z $.  Defining $\eta = \lambda(\lfloor x \rfloor, \lceil x \rceil)$, 
	we claim that 
	\[
		\left(\alpha_\lambda, x + t (d(\alpha_\lambda) - x) \right) \sim \left(\alpha_\eta, z + t (d(\alpha_\eta) - z) \right) \sim \left(\alpha_\mu, y + t (d(\alpha_\mu) - y) \right).
	\]
	Indeed,  writing $\lambda' =  \lambda(0, \lfloor x \rfloor)$, we have $x = d(\lambda') + z$ and $\alpha_\lambda = \lambda' \alpha_\eta$.  It follows that $d(\alpha_\lambda) = d(\alpha_\eta) + \lfloor x \rfloor$ 
	and hence that 
	\[
		d(\alpha_\lambda) - x =d(\alpha_\lambda)  - \lfloor x \rfloor + (\lfloor x \rfloor - x) = d(\alpha_\eta) - z \; .
	\]
	Consequently, Lemma~\ref{lem:top-realiz-relation} implies that 
	\[
		\left(\alpha_\lambda, x + t (d(\alpha_\lambda) - x) \right) = \left(\lambda'\alpha_\eta, d(\lambda') + z + t (d(\alpha_\eta) - z) \right)  \sim \left(\alpha_\eta, z + t (d(\alpha_\eta) - z) \right) \; ,
	\]	
	which proves the first equivalence in the claim. The second is proved similarly. Taken together, this shows that if $(\lambda, x) \sim (\mu, y)$, then $\widetilde{H}(t, \lambda, x) = \widetilde{H}(t, \mu, y)$. Therefore $\widetilde{H}$ factors through a map $H \colon [0,1] \times |{\Lambda}| \to |{\Lambda}|$, whose continuity follows from that of $\widetilde{H}$. 
	
	We compute, for any $\lambda \in \Lambda$ and $x \in [ 0 , d(\lambda)]$, that 
	\[
		H \big(1, [( \lambda, x)] \big) = \left[\left(\alpha_\lambda, x + (d(\alpha_\lambda) - x) \right)\right] = \left[\left(\alpha_\lambda, d(\alpha_\lambda) \right)\right] = [(\alpha, 0)]
	\]
	by  Lemma~\ref{lem:top-realiz-relation} and the fact that $\alpha_\lambda = \alpha_\lambda \alpha$. On the other hand, 
	\[
		H \big(0, [( \lambda, x)] \big) = \left[\left(\alpha_\lambda, x \right)\right] = \left[\left(\lambda, x \right)\right] \; , 
	\]
	which follows from the equation $\alpha_\lambda = \lambda \alpha_{s(\lambda)}$. 
	Therefore $H$ constitutes a homotopy from the identity map to the constant map onto $\left[\left(\alpha, 0 \right)\right]$. 
\end{proof}

\begin{cor}\label{cor:top-realiz-acyclic}
	If a $k$-graph $\Lambda$ has an initial vertex, then its reduced cubical homology groups all vanish. 
\end{cor}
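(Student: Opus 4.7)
The plan is to combine Proposition~\ref{prop:top-realiz-contr} with the comparison theorem of \cite[Theorem~6.3]{kps-hlogy}, which identifies the cubical homology of $\Lambda$ with the singular homology of its topological realization $|\Lambda|$. Concretely, my proof would proceed in three short steps.

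First, I would invoke Proposition~\ref{prop:top-realiz-contr} to conclude that whenever $\Lambda$ has an initial vertex $\alpha$, the topological realization $|\Lambda|$ is contractible, hence homotopy equivalent to a point. Second, since reduced singular homology is a homotopy invariant and vanishes on a point, the reduced singular homology $\widetilde{H}_n(|\Lambda|)$ vanishes in every degree $n$. Third, I would cite \cite[Theorem~6.3]{kps-hlogy}, which establishes a natural isomorphism between the cubical homology $H_n^{\operatorname{cub}}(\Lambda)$ and the singular homology $H_n(|\Lambda|)$ for all $n$. This isomorphism is compatible with the respective augmentations (the cubical $\partial_0$ sending each vertex to $1 \in \mathbb{Z}$ corresponds to the standard augmentation of singular chains), so it descends to an isomorphism $\widetilde{H}_n^{\operatorname{cub}}(\Lambda) \cong \widetilde{H}_n(|\Lambda|)$ on reduced groups. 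Combining the three steps yields $\widetilde{H}_n^{\operatorname{cub}}(\Lambda) = 0$ for all $n$.

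There is no genuine obstacle here: all the substantive work has already been carried out in the preceding proposition (construction of the explicit contraction $H$) and in \cite[Theorem~6.3]{kps-hlogy}. The only minor bookkeeping point is to check that the comparison between cubical and singular homology is compatible with augmentations so that the passage to reduced theories is legitimate; this is immediate from the construction in \cite{kps-hlogy}, since the cubical $0$-chains correspond to vertices of $|\Lambda|$ and the comparison map is the identity on these. Alternatively, if one prefers not to pass through reduced theories at the topological level, one can simply note that for a contractible space $H_0(|\Lambda|) \cong \mathbb{Z}$ and $H_n(|\Lambda|) = 0$ for $n \geq 1$, and then use Remark~\ref{rmk:reduced-homology} together with the identification of unreduced cubical homology to deduce the vanishing of $\widetilde{H}_n^{\operatorname{cub}}(\Lambda)$ in every degree.
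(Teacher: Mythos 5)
Your proposal is correct and follows essentially the same route as the paper: contractibility of $|\Lambda|$ from Proposition~\ref{prop:top-realiz-contr}, the isomorphism $H_n^{\operatorname{cub}}(\Lambda) \cong H_n(|\Lambda|)$ from \cite[Theorem~6.3]{kps-hlogy}, and the passage to reduced groups via Remark~\ref{rmk:reduced-homology} (your ``alternative'' third step is exactly what the paper does, and is the safer option since \cite[Theorem~6.3]{kps-hlogy} is stated for unreduced homology, sparing you the augmentation-compatibility check of your primary route). No gaps.
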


\begin{proof}
	By Theorem 6.3 of \cite{kps-hlogy}, the group $H_n^{\operatorname{cub}}({\Lambda})$ is isomorphic to the $n$th homology group of its topological realization $|{\Lambda}|$, for any $n \in \Z$. It follows from Remark~\ref{rmk:reduced-homology} that for any $n \in \Z$, the \emph{reduced} cubical homology group $\widetilde{H}_n^{\operatorname{cub}}({\Lambda})$ is isomorphic to the $n$th \emph{reduced} homology group of $|{\Lambda}|$, which is thus trivial since $|{\Lambda}|$ is contractible by Proposition~\ref{prop:top-realiz-contr}. 
\end{proof}

Although we will not use the following facts, we explore some further consequences of the existence of an initial vertex below. 

\begin{rmk}
	If a $k$-graph $\Lambda$ has an initial vertex $\alpha$, then the reduced categorical homology groups $\widetilde{H}_n(\Lambda)$ all vanish. This follows from \cite[Corollary~2]{Quillen}.  Indeed, the reduced categorical homology groups are computed by the augmented complex 
	\[
		\ldots \overset{\partial_{n+1}}{\to} \Z \Lambda^{*n} \overset{\partial_{n}}{\to} \Z \Lambda^{*(n-1)} \overset{\partial_{n-1}}{\to} \ldots \overset{\partial_{2}}{\to} \Z \Lambda^{*1} \overset{\partial_{1}}{\to} \Z \Lambda^{*0} \overset{\partial_{0}}{\to} \Z \overset{0}{\to} \ldots 
	\]
	of categorical chains, as defined in Remark~\ref{rmk:categorical-cohlogy-reformulations} but with $\partial_0(v) = 1$ for any $v \in \Lambda^{*0} = \operatorname{Obj}\Lambda$. This complex is contractible via the homotopy
	\[
		h_n \colon \Z \Lambda^{*n} \to \Z \Lambda^{*(n+1)} \, , \quad ( \lambda_0, \ldots, \lambda_{n-1}) \mapsto (-1)^{n+1} ( \lambda_0, \ldots, \lambda_{n-1}, \alpha_{s(\lambda_{n-1})}) 
	\]
	for $n \geq 0$, and 
	\[
		h_{-1}\colon \Z  \to \Z \Lambda^{*0} \, , \quad 1 \mapsto \alpha \; .
	\]
	In particular, this fact already shows that for $k$-graphs with initial vertices, the cubical homology groups and the categorical homology groups coincide. 
\end{rmk}

\begin{rmk}\label{rmk:alg-of-Lambda-v}
	If a row-finite, source-free $k$-graph $\Lambda$ has an initial vertex $\alpha$, then $C^*({\Lambda}) \cong \K(\ell^2(\operatorname{Obj}\Lambda))$. 
	By definition, $C^*({\Lambda})$ is generated by projections $\{ p_v \colon v \in \operatorname{Obj}\Lambda\}$ and partial isometries $\{s_{\lambda }\colon \lambda \in \Lambda\}$ subject to the relations
	\[
		s_{\lambda}^* s_{\lambda} = p_{s(\lambda) } \, , \quad  \sum_{\lambda \in v \Lambda^n} \quad s_{\lambda} s_{\lambda}^* = p_{v} \, , \quad \text{and}\quad s_{\lambda} s_{\mu} = \delta_{s(\lambda), r(\mu) } s_{\lambda \mu} \; .
	\]
	Note that $p_v = s_{v}$ for any $v \in \operatorname{Obj}\Lambda$.   It follows immediately from these relations that $C^*(\Lambda)$ is densely spanned by $\{s_\lambda s_\mu^*: s(\lambda) = s(\mu)\}$.
	
When $\Lambda$ has an initial vertex $\alpha$, the uniqueness of the elements $\alpha_{v}$ implies that 
$p_v = s_{\alpha_v} s_{\alpha_v}^*$ for any vertex $v$.
Consequently,
\[ s_\lambda s_\mu^* = s_\lambda s_{\alpha_{s(\lambda)}} s_{\alpha_{s(\lambda)}}^* s_\mu^* = s_{\alpha_{r(\lambda)}} s_{\alpha_{r(\mu)}}^*.\]
This tells us that the map from $C^*(\Lambda)$ to $\K(\ell^2(\operatorname{Obj}\Lambda))$ given on the dense spanning set by 
\[ s_\lambda s_\mu^* \mapsto E_{r(\lambda), r(\mu)}\]
is well defined.  It is easily checked to be a $*$-isomorphism.
\end{rmk}

\section{Main results and consequences}\label{sec:main}

\begin{thm}	\label{thm:proj-res}
	The chain complex 
	\begin{equation}\label{eq:cubical-free-res}
		\dots \stackrel{\partial_{n+1}}{\to} \Z Q_{n+1}(\widetilde{\Lambda}) \stackrel{\partial_n}{\to} \Z Q_n(\widetilde{\Lambda}) \stackrel{\partial_{n-1}}{\to} \dots \to \Z Q_0(\widetilde{\Lambda}) \stackrel{\partial_0}{\to} \Z^\Lambda \to 0
	\end{equation}
	of right $\Lambda$-modules is a free resolution of the constant $\Lambda$-module $\Z^\Lambda$. 
\end{thm}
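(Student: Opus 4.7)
The plan is to break the statement into its two constituent parts. Freeness of each module $\Z Q_n(\widetilde\Lambda)$ (for $n \geq 0$) is already Proposition~\ref{prop:cubical-module-free}, so nothing new is required there. The real content is to establish exactness of the displayed sequence, which I would do by evaluating everything at a vertex and invoking the contractibility machinery built up in Section~\ref{sec:initial-vertices}.

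First I would recall (from the definition of exactness of a sequence of $\Lambda$-modules, Definition~\ref{def:projective-module}) that it suffices to check exactness of the sequence of abelian groups obtained by evaluating at each $v \in \Lambda^0$. By Construction~\ref{constr:cubical-resolution}, for a fixed $v$ the evaluated sequence is precisely the augmented cubical chain complex of the future path $k$-graph $\widetilde\Lambda(v)$, namely
\[
\cdots \to \Z Q_1(\widetilde\Lambda(v)) \to \Z Q_0(\widetilde\Lambda(v)) \to \Z \to 0,
\]
because $\Z \widetilde Q_{-1}(\widetilde\Lambda)(v) = \Z^{\Lambda}(v) = \Z$ and $\Z Q_n(\widetilde\Lambda)(v)$ is the abelian group generated by pairs $(\eta,\mu)$ with $\eta \in Q_n(\Lambda)$, $\mu \in s(\eta)\Lambda v$, which matches the generators $Q_n(\widetilde\Lambda(v))$.

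So the task reduces to showing that for each $v \in \Lambda^0$, the augmented cubical chain complex of $\widetilde\Lambda(v)$ is acyclic, i.e.\ all the reduced cubical homology groups $\widetilde H_n^{\operatorname{cub}}(\widetilde\Lambda(v))$ vanish. Here I would invoke Remark~\ref{rmk:future-path-graph}\eqref{rmk:future-path-graph:initial}, which tells us that $v$ is an initial vertex in $\widetilde\Lambda(v)$ in the sense of Definition~\ref{def:initial-vertex}, and then apply Corollary~\ref{cor:top-realiz-acyclic} directly to conclude that all reduced cubical homology groups of $\widetilde\Lambda(v)$ vanish. Putting this together with Proposition~\ref{prop:cubical-module-free} gives both the exactness and the freeness, completing the proof.

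There is no substantive obstacle once the framework of Section~\ref{sec:initial-vertices} is in place; the proof is really a bookkeeping exercise identifying the fibre of the sequence \eqref{eq:cubical-free-res} at $v$ with the augmented cubical chain complex of $\widetilde\Lambda(v)$ and then citing the contractibility of $|\widetilde\Lambda(v)|$ (Proposition~\ref{prop:top-realiz-contr}) via Corollary~\ref{cor:top-realiz-acyclic}. The only subtlety worth checking carefully is that under the identification $\Z Q_n(\widetilde\Lambda)(v) \cong \Z Q_n(\widetilde\Lambda(v))$, the boundary maps induced from the right $\Lambda$-module structure really coincide with the cubical boundary maps of $\widetilde\Lambda(v)$ as defined in Definition~\ref{def:cubical-chain-complex}; but this is immediate from Construction~\ref{constr:cubical-resolution}, since $\Z Q_*(\widetilde\Lambda) = \Z Q_* \circ \widetilde\Lambda$ as functors.
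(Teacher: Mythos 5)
Your proposal is correct and takes essentially the same approach as the paper's own proof: freeness via Proposition~\ref{prop:cubical-module-free}, and exactness by evaluating at each vertex $v$, identifying the evaluated sequence with the augmented cubical chain complex of $\widetilde{\Lambda}(v)$, and invoking the initial vertex property (Remark~\ref{rmk:future-path-graph}\eqref{rmk:future-path-graph:initial}) together with Corollary~\ref{cor:top-realiz-acyclic}. The only difference is that you spell out the vertexwise identification and the compatibility of boundary maps more explicitly than the paper, which is a harmless (indeed welcome) elaboration.
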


We point out that this chain complex is merely a truncation of the complex $\Z \widetilde{Q}_*(\widetilde{\Lambda})$ introduced in Construction~\ref{constr:cubical-resolution}. We will refer to it as the \emph{cubical free resolution} of $\Lambda$. 

\begin{proof}
	Proposition~\ref{prop:cubical-module-free} tells us that $\Z Q_n(\widetilde{\Lambda})$ is a free $\Lambda$-module for each $n \in \Z$. On the other hand, since each future path $k$-graph $\widetilde{\Lambda}(v)$ has an initial vertex by Remark~\ref{rmk:future-path-graph}\eqref{rmk:future-path-graph:initial}, Corollary~\ref{cor:top-realiz-acyclic} implies that for each $v \in \operatorname{Obj}\Lambda$, the chain complex of abelian groups $\chaincomplex{ \Z \widetilde{Q}_n(\widetilde{\Lambda}(v)), \partial_n }_{n \in \N}$ is exact (see Definition~\ref{def:cubical-chain-complex}). Hence the chain complex $\chaincomplex{\Z \widetilde{Q}_n(\widetilde{\Lambda}), \partial_n }_{n \in \N}$ of $\Lambda$-modules is exact, which is what we wanted to prove. 
\end{proof}

\begin{thm}
\label{thm:cubical-equals-categorical}
For any $k$-graph $\Lambda$, the cubical and categorical \mbox{(co-)}homology groups of $\Lambda$ are isomorphic:
\[
	H_n^{\operatorname{cub}}(\Lambda, \mathcal N) \cong H_n(\Lambda, \mathcal N) \text{ and }   H^n_{\operatorname{cub}}(\Lambda, \mathcal M) \cong H^n(\Lambda, \mathcal M) 
\]
for any left $\Lambda$-module $\mathcal{N}$ and right $\Lambda$-module $\mathcal{M}$. 
\end{thm}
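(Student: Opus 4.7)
The plan is to assemble the theorem directly from the two main technical results already in hand, namely Theorem~\ref{thm:proj-res} (the cubical free resolution really is a resolution) and Proposition~\ref{prop:cubical-chain-iso} (the cubical \mbox{(co-)}chain complexes are obtained by applying $\mathord- \otimes_\Lambda \mathcal N$ and $\Hom_\Lambda(\mathord-, \mathcal M)$ to $\Z Q_*(\widetilde\Lambda)$). No new computation is required: the hard work has been pushed into the construction of the cubical free resolution and the verification of its acyclicity.

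First I would observe that, by Theorem~\ref{thm:proj-res}, the sequence $\chaincomplex{\Z Q_n(\widetilde{\Lambda}), \partial_n}_{n\in \N}$ is a free resolution of the constant right $\Lambda$-module $\Z^\Lambda$, and by Remark~\ref{rmk:free-module-projective} it is in particular a projective resolution. Hence Definition~\ref{def:categorical-cohlogy} permits us to compute the categorical homology and cohomology of $\Lambda$ from this specific resolution: for any left $\Lambda$-module $\mathcal N$ and right $\Lambda$-module $\mathcal M$, we have
\[
H_n(\Lambda, \mathcal N) \cong H_n\!\left( \Z Q_*(\widetilde\Lambda) \otimes_\Lambda \mathcal N \right) \quad \text{and} \quad H^n(\Lambda, \mathcal M) \cong H^n\!\left( \Hom_\Lambda(\Z Q_*(\widetilde\Lambda), \mathcal M) \right).
\]

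Next I would invoke Proposition~\ref{prop:cubical-chain-iso}, which provides natural \mbox{(co-)}chain isomorphisms
\[
C_*^{\operatorname{cub}}(\Lambda, \mathcal N) \cong \Z Q_*(\widetilde\Lambda) \otimes_\Lambda \mathcal N \quad \text{and} \quad C^*_{\operatorname{cub}}(\Lambda, \mathcal M) \cong \Hom_\Lambda\!\left(\Z Q_*(\widetilde\Lambda), \mathcal M\right).
\]
Since homotopy-equivalent (in fact, isomorphic) \mbox{(co-)}chain complexes have canonically isomorphic \mbox{(co-)}homology groups, passing to (co-)homology on both sides and combining with the previous display yields the desired isomorphisms $H_n^{\operatorname{cub}}(\Lambda, \mathcal N) \cong H_n(\Lambda, \mathcal N)$ and $H^n_{\operatorname{cub}}(\Lambda, \mathcal M) \cong H^n(\Lambda, \mathcal M)$.

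There is no real obstacle at this stage: the main obstacle was already overcome in Section~\ref{sec:initial-vertices}, where the contractibility of the topological realization of a $k$-graph with an initial vertex was used to establish exactness of the cubical free resolution. The only matter deserving a brief remark in the write-up is that the isomorphisms we obtain are canonical up to the standard identification of \mbox{(co-)}homology groups associated to two projective resolutions of $\Z^\Lambda$ (cf.\ \cite[Theorem~III.6.1]{maclane}), so that they are in fact natural in the coefficient module $\mathcal N$ (respectively $\mathcal M$); this naturality is immediate from the naturality of the isomorphisms in Proposition~\ref{prop:cubical-chain-iso}.
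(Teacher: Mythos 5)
Your proposal is correct and follows essentially the same route as the paper: both invoke Theorem~\ref{thm:proj-res} together with Definition~\ref{def:categorical-cohlogy} (via the independence of the choice of projective resolution) to compute the categorical \mbox{(co-)}homology from the cubical free resolution, and then apply Proposition~\ref{prop:cubical-chain-iso} to identify the resulting complexes with the cubical \mbox{(co-)}chain complexes. Your added remark on naturality in the coefficient module is a harmless (and correct) elaboration beyond the paper's two-line proof.
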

\begin{proof}
	It follows from Theorem~\ref{thm:proj-res} and Definition \ref{def:categorical-cohlogy} that we can use the complex $\Z Q_*(\widetilde{\Lambda})$ to compute the categorical \mbox{(co-)}homology of $\Lambda$:
	\begin{equation} \label{eq:new-computation-categorical}
	H_n(\Lambda, \mathcal{N}) \cong H^n\left( Z Q_*(\widetilde{\Lambda})\otimes_\Lambda \mathcal{N}\right) \text{ and }  H^n(\Lambda, \mathcal{M}) \cong H^n\left( \Hom_\Lambda\left(\Z Q_*(\widetilde{\Lambda}), \mathcal{M}\right) \right) 
	\end{equation}
	for an arbitrary left $\Lambda$-module $\mathcal{N}$ and right $\Lambda$-module $\mathcal{M}$. These are then isomorphic to the cubical \mbox{(co-)}homology groups by Proposition~\ref{prop:cubical-chain-iso}.
\end{proof}

Next we detail a few of the consequences of the isomorphism $H^n_{\operatorname{cub}}(\Lambda, \mathcal M) \cong H^n(\Lambda, \mathcal M) $ established in Theorem \ref{thm:cubical-equals-categorical}
above. 

\begin{prop}
	Let $\Lambda$ be a $k$-graph, $\mathcal{N}$ a left $\Lambda$-module, and $\mathcal{M}$ a right $\Lambda$-module.  The categorical homology groups $H_n(\Lambda, \mathcal N)$ and the categorical cohomology groups $H^n(\Lambda, \mathcal M)$ of $\Lambda$ are zero for $n > k $.  
\end{prop}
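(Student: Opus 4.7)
The plan is to deduce this vanishing result immediately from the isomorphism between categorical and cubical (co)homology established in Theorem~\ref{thm:cubical-equals-categorical}, together with the trivial observation that cubical (co)homology vanishes in degrees above $k$ for dimensional reasons.

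More precisely, I would invoke Remark~\ref{rmk:cubical-vanish}, which records that the cubical chain complex $\chaincomplex{C_n^{\operatorname{cub}}(\Lambda,\mathcal{N})}_{n\in\N}$ and the cubical cochain complex $\chaincomplex{C^n_{\operatorname{cub}}(\Lambda,\mathcal{M})}_{n\in\N}$ are identically zero in degrees $n>k$. The underlying reason is combinatorial: by Definition~\ref{def:cubical}, an $n$-cube in $\Lambda$ has degree $\sum_{j=1}^n e_{c_j}$ with $1\leq c_1<\cdots<c_n\leq k$, so $Q_n(\Lambda)=\varnothing$ whenever $n>k$. Consequently $H_n^{\operatorname{cub}}(\Lambda,\mathcal{N})=0$ and $H^n_{\operatorname{cub}}(\Lambda,\mathcal{M})=0$ for all $n>k$.

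Applying Theorem~\ref{thm:cubical-equals-categorical}, which gives the isomorphisms $H_n(\Lambda,\mathcal{N})\cong H_n^{\operatorname{cub}}(\Lambda,\mathcal{N})$ and $H^n(\Lambda,\mathcal{M})\cong H^n_{\operatorname{cub}}(\Lambda,\mathcal{M})$ for every $n$, one concludes that $H_n(\Lambda,\mathcal{N})=0=H^n(\Lambda,\mathcal{M})$ for $n>k$, completing the proof. There is no real obstacle here; the whole point is that this vanishing, which is utterly transparent from the cubical definition but completely opaque from the definition of categorical (co)homology via the (infinite) simplicial free resolution $\chaincomplex{\mathcal{P}_n}_{n\in\N}$, becomes accessible precisely because the cubical free resolution of Theorem~\ref{thm:proj-res} provides an alternative, finite-length projective resolution of $\Z^{\Lambda}$ with $\Z Q_n(\widetilde{\Lambda})=0$ for $n>k$.
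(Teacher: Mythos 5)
Your proposal is correct and follows the paper's own proof exactly: since $Q_n(\Lambda)=\varnothing$ for $n>k$, the cubical \mbox{(co-)}chain complexes vanish in those degrees, and Theorem~\ref{thm:cubical-equals-categorical} transfers this vanishing to the categorical \mbox{(co-)}homology groups. Your additional remarks (citing Remark~\ref{rmk:cubical-vanish} and noting that Theorem~\ref{thm:proj-res} supplies a length-$k$ free resolution of $\Z^\Lambda$) are accurate elaborations of the same argument.
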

\begin{proof}
	By construction, there are no $n$-cubes in a $k$-graph if $n > k$, so $0=H_n^{\operatorname{cub}}(\Lambda, \mathcal N) \cong H_n(\Lambda, \mathcal N) $ and the same goes for the cohomology groups.
\end{proof}

The next proposition uses the notation $M^\Lambda$ from Definition~\ref{def:lambda-module}. 

\begin{prop}
	Let $M$ be an abelian group. Then for all $n$, the categorical homology groups $H_n(\Lambda,  M^\Lambda)$ (respectively, the categorical cohomology groups $H^n(\Lambda,  M^\Lambda)$) are isomorphic to the cohomology groups $H_n(|\Lambda|, M)$ (respectively, the homology groups $H^n(|\Lambda|, M)$) of the topological realization of $\Lambda$.
\end{prop}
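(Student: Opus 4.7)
The plan is to reduce the claim to a chain of known identifications, each of which has already been set up in the paper. First, Theorem~\ref{thm:cubical-equals-categorical} supplies isomorphisms $H_n(\Lambda, M^\Lambda) \cong H_n^{\operatorname{cub}}(\Lambda, M^\Lambda)$ and $H^n(\Lambda, M^\Lambda) \cong H^n_{\operatorname{cub}}(\Lambda, M^\Lambda)$ for the constant module $M^\Lambda$. So the entire statement reduces to comparing the cubical \mbox{(co-)}homology of $\Lambda$ with coefficients $M^\Lambda$ to the ordinary singular \mbox{(co-)}homology of the topological realization $|\Lambda|$ with coefficients $M$.

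Next, I would invoke Remark~\ref{rmk:cubical-coh-constant-module}, which identifies the cubical cochain complex $C^*_{\operatorname{cub}}(\Lambda, M^\Lambda)$ with $\operatorname{Hom}(\Z Q_*(\Lambda), M)$, so that $H^n_{\operatorname{cub}}(\Lambda, M^\Lambda)$ recovers the cubical cohomology group of \cite[Definition~7.2]{kps-hlogy}. An analogous observation for homology (already visible in Construction~\ref{constr:cubical-homology-pre}, since for $\mathcal N = M^\Lambda$ the left action in the differential is trivial) identifies $C_*^{\operatorname{cub}}(\Lambda, M^\Lambda)$ with $\Z Q_*(\Lambda) \otimes_{\Z} M$, so $H_n^{\operatorname{cub}}(\Lambda, M^\Lambda)$ is the standard cubical homology with $M$-coefficients.

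Finally, I would apply \cite[Theorem~6.3]{kps-hlogy}, the same result cited in the proof of Corollary~\ref{cor:top-realiz-acyclic}, which (together with its cohomological counterpart from \cite{kalisz-kumjian-quigg-sims, kps-hlogy}) identifies the standard cubical \mbox{(co-)}homology groups of $\Lambda$ with the singular \mbox{(co-)}homology of $|\Lambda|$ with coefficients $M$. Chaining the three identifications yields the isomorphisms $H_n(\Lambda, M^\Lambda) \cong H_n(|\Lambda|, M)$ and $H^n(\Lambda, M^\Lambda) \cong H^n(|\Lambda|, M)$ claimed in the proposition.

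There is no real obstacle here: the work has already been done, first in the main Theorem~\ref{thm:cubical-equals-categorical} of this section, and second in the papers \cite{kps-hlogy, kalisz-kumjian-quigg-sims}. The only small point to verify carefully is that the identification of the coefficients is consistent \textemdash\ that is, that plugging the constant module $M^\Lambda$ into our generalized cubical \mbox{(co-)}chain complex truly reproduces the original $M$-coefficient complex of \cite{kps-hlogy}, which is immediate from the definitions of $C_*^{\operatorname{cub}}$ and $C^*_{\operatorname{cub}}$ since the module action appearing in the differentials becomes trivial for a constant module.
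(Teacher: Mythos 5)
Your homology half is exactly the paper's argument: Theorem~\ref{thm:cubical-equals-categorical} plus the isomorphism $H_n^{\operatorname{cub}}(\Lambda,M)\cong H_n(|\Lambda|,M)$ of \cite[Theorem~6.3]{kps-hlogy} (with the coefficient identification of Remark~\ref{rmk:cubical-coh-constant-module} and Construction~\ref{constr:cubical-homology-pre} implicit). But your cohomology half has a genuine gap: you invoke a ``cohomological counterpart'' of \cite[Theorem~6.3]{kps-hlogy}, and no such result exists in the cited references. What \cite{kps-hlogy} provides on the cohomological side is only Definition~7.2 (cubical cohomology as $\Hom(\Z Q_*(\Lambda),M)$) and Theorem~7.3, a universal coefficient theorem; a direct isomorphism $H^n_{\operatorname{cub}}(\Lambda,M)\cong H^n(|\Lambda|,M)$ is not stated there, and indeed establishing it is part of the content of the present proposition, so citing it is circular at exactly the point where work is required.

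The paper closes this hole as follows: it takes the UCT short exact sequence $0 \to \operatorname{Ext}(H_{n-1}^{\operatorname{cub}}(\Lambda), M) \to H^n_{\operatorname{cub}}(\Lambda, M) \to \Hom(H_{n}^{\operatorname{cub}}(\Lambda), M)\to 0$ from \cite[Theorem~7.3]{kps-hlogy}, substitutes the known isomorphisms $H_{n}^{\operatorname{cub}}(\Lambda) \cong H_n(|\Lambda|)$ and $H^n_{\operatorname{cub}}(\Lambda, M) \cong H^n(\Lambda, M^\Lambda)$, and then compares the resulting sequence with the topological UCT for $|\Lambda|$, concluding via the Five Lemma. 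To repair your proposal, replace your third step in the cohomological case by this UCT/Five Lemma argument; the only alternative would be to actually prove that $|\Lambda|$ admits a CW structure whose cellular cochain complex with $M$-coefficients is $\Hom(\Z Q_*(\Lambda),M)$, which is not something you can simply cite from \cite{kps-hlogy} or \cite{kalisz-kumjian-quigg-sims}.
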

\begin{proof}
	The case of the homology groups follows from Theorem \ref{thm:cubical-equals-categorical} and the isomorphism $H_{n}^{\operatorname{cub}}(\Lambda, M) \cong H_n(|\Lambda|, M)$ from \cite[Theorem 6.3]{kps-hlogy}. For cohomology, we use the universal coefficient theorem (UCT) for the cubical (co)homology groups of $k$-graphs established in \cite[Theorem 7.3]{kps-hlogy}: the sequence 
	\[ 0 \to \text{Ext}(H_{n-1}^{\operatorname{cub}}(\Lambda), M) \to H^n_{\operatorname{cub}}(\Lambda, M) \to \Hom(H_{n}^{\operatorname{cub}}(\Lambda), M)\to 0\]
	is exact. Combined with the isomorphisms 
	\[ H_{n}^{\operatorname{cub}}(\Lambda) \cong H_n(|\Lambda|) \quad \text{ and } \quad H^n_{\operatorname{cub}}(\Lambda, M) \cong H^n(\Lambda, M^\Lambda),\]
	 the UCT exact sequence becomes
	\[ 0 \to \text{Ext}(H_{n-1}(|\Lambda|), M) \to H^n(\Lambda, M^\Lambda) \to \Hom(H_n(|\Lambda|), M) \to 0.\]
	The UCT for topological spaces and the Five Lemma now imply that 
	\[ H^n(\Lambda, M^\Lambda) \cong H^n(|\Lambda|, M). \qedhere \]
\end{proof}

\section{The chain maps}
\label{sec:chain-maps}

In this last section, we provide a second, algebraic, proof of the exactness of the cubical free resolution in Equation~\eqref{eq:cubical-free-res}, without using the topological constructions and results from Section~\ref{sec:initial-vertices}. The advantage of our second approach is that it constructs explicit \mbox{(co-)}chain maps  that implement the isomorphisms between the categorical and cubical \mbox{(co-)}homology groups, as these groups were originally defined in \cite{kps-hlogy}. To be precise, this second proof does not rely on the result  \cite[Theorem III.6.3]{maclane} that allows one to use any projective resolution to compute the categorical \mbox{(co-)}homology. We anticipate that this approach may facilitate future computations. As examples of such computations, we establish naturality of our isomorphisms and  compare our isomorphisms with those constructed in \cite{kps-twisted} in degrees 0, 1, and 2. 

These \mbox{(co-)}chain maps ultimately come from $\Lambda$-chain maps back and forth between the cubical free resolution 
\[
	\dots \stackrel{\partial_{n+1}}{\to} \Z Q_{n+1}(\widetilde{\Lambda}) \stackrel{\partial_n}{\to} \Z Q_n(\widetilde{\Lambda}) \stackrel{\partial_{n-1}}{\to} \dots \to \Z Q_0(\widetilde{\Lambda}) \stackrel{\partial_0}{\to} \Z^\Lambda \overset{0}{\to} 0
\]
from Equation~\eqref{eq:cubical-free-res} and the standard categorical free resolution 
\[
	\ldots \overset{\partial_{n+1}^{\Presolution}}{\to} \mathcal P_{n}(\Lambda) \overset{\partial_{n}^{\Presolution}}{\to} \mathcal P_{n-1}(\Lambda) \overset{\partial_{n-1}^{\Presolution}}{\to} \ldots  \overset{\partial_{1}^{\Presolution}}{\to} \mathcal  P_{0}(\Lambda) \overset{\partial_{0}^{\Presolution}}{\to} \Z^\Lambda \overset{0}{\to} 0
\]
from Remark~\ref{rmk:categorical-resolution}. 

Viewing these as bi-infinite $\Lambda$-chain complexes, we keep the notations $\Z \widetilde{Q}_*(\widetilde{\Lambda})$ for the former and $\Presolution_*(\Lambda)$ for the latter. We remind the reader that in both complexes, the right action of $\Lambda$ affects only the last component of a generating tuple; in other words, each generator's pertinent homological information is carried in the previous entries in the tuple.  Consequently, our $\Lambda$-chain maps  will always leave the last entry in each tuple untouched.


\begin{defn}
	Let $\Lambda$ be a $k$-graph and let $\lambda \in \Lambda$. Recall that for $1 \leq i \leq k$, $d_i(\lambda) \in \N $ is defined so that $d(\lambda) = \sum_{i=1}^{k} d_i(\lambda) e_i \in \N^k$. We also define $C(\lambda)$, the \emph{color type} of $\lambda$, to be the set $\{ i \in \{1, \ldots, k\} \colon d_i(\lambda) > 0 \}$. Lastly, we let $C_{(j)}(\lambda)$ be the $j$th smallest number in $C(\lambda)$, for $j = 1, \ldots, |C(\lambda)|$. We sometimes simply write $d_i$, $C$ and $C_{(j)}$ if it is clear which morphism we are working with. 
\end{defn}

\subsection{Mapping cubes to composable tuples}

We first fix some notation. The symmetric group on $\{1, \ldots, n\}$ is denoted by $\Sigma_{n}$, so that $\Sigma_{n-1}$ is a subgroup of $\Sigma_{n}$. For a permutation $\sigma \in \Sigma_{n}$, we write $\operatorname{sgn}(\sigma) $ for its sign, which takes value in $\{-1, 1\}$. 
  
\begin{constr}\label{constr:triangle-map}
	Given $v \in \operatorname{Obj}\Lambda$, $(\lambda, \mu) \in Q_n(\widetilde{\Lambda})(v)$ and a permutation $\sigma \in \Sigma_{n}$, write 
	\[ \lambda = \lambda_1^\sigma \cdots\lambda_n^\sigma\]
	where, for each $i$, we have $C(\lambda_i^{\sigma}) = \{ C_{(\sigma(i))} (\lambda) \}$, that is, $\lambda_i^{\sigma}$ only carries the $\sigma(i)$-th color of $\lambda$. The existence of such a decomposition is guaranteed by the factorization property. In fact, using the notation in Remark~\ref{rmk:Lambda-m-n}, we can write 
	\[
		\lambda_i^{\sigma} = \lambda \left( \sum_{j=1}^{i-1} d_{C_{(\sigma(j))}}(\lambda) e_{C_{(\sigma(j))}}, \sum_{j=1}^{i} d_{C_{(\sigma(j))}}(\lambda)   e_{C_{(\sigma(j))}}\right) \; .
	\] 
	
	For any $n \in \N$, we define a $\Lambda$-module map $\triangledown_n \colon \Z Q_n(\widetilde{\Lambda}) \to \mathcal P_n(\Lambda)$ by linearly extending the prescription
	\begin{equation}
		\label{eq:triangle-formula} \left(\triangledown_n\right)_{v} (\lambda, \mu) := \sum_{\sigma \in \Sigma_{n}} \operatorname{sgn}(\sigma) (\lambda_1^\sigma, \ldots, \lambda_n^\sigma, \mu) 
	\end{equation}
	for any $v \in \operatorname{Obj}\Lambda$ and $(\lambda, \mu) \in Q_n(\widetilde{\Lambda}(v))$. 
	We define $\triangledown_{-1} \colon \Z \widetilde{Q}_{-1}(\widetilde{\Lambda}) \to \Presolution_{-1}(\Lambda)$ to be the identity map on $\Z^\Lambda$. For $n < -1$, we let $\triangledown_n \colon \Z \widetilde{Q}_{n}(\widetilde{\Lambda}) \to \Presolution_{n}(\Lambda)$ be the zero map. 
\end{constr} 

\begin{prop}\label{prop:triangle-is-chain-map}
	The sequence of maps $\left(\triangledown_n\colon \Z {\widetilde{Q}}_n( \widetilde{\Lambda}) \to \Presolution_n(\Lambda)\right)_{n \in \Z}$ constitutes a map of chain complexes, that is, $\triangledown_{n-1} \circ \partial_{n} = \partial_n^{\Presolution} \circ \triangledown_{n}$ for all $n \in \Z$.
\end{prop}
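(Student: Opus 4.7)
The plan is to check the chain map identity $\triangledown_{n-1}\circ\partial_n = \partial_n^{\mathcal P}\circ\triangledown_n$ directly on the free generators $(\lambda,\mu) \in Q_n(\widetilde{\Lambda})(v)$. Both composites are $\Lambda$-module maps, so it suffices to evaluate on generators. For $n < 0$ the statement is trivial, and for $n = 0$ both sides send the generator $(r(\mu),\mu)$ to $(\mu) \in \mathcal{P}_0$ (resp.\ to $1 \in \Z^\Lambda(s(\mu))$) and the identity follows from $\partial_0^{\mathcal P}(\mu) = 1 = \partial_0(\mu)$. The main work is $n \geq 1$, where the key combinatorial input is that for any permutation $\sigma \in \Sigma_n$, the factorization $\lambda = \lambda_1^\sigma \cdots \lambda_n^\sigma$ realizes the triangulation of the $n$-cube $[0, d(\lambda)]$ along the monotone path through its vertices dictated by $\sigma$.

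First I would identify the face operators in $\widetilde{\Lambda}$. Using Remark~\ref{rmk:future-path-graph}(\ref{rmk:future-path-graph:degree}), one checks that $\tilde F_j^0(\lambda,\mu) = (F_j^0(\lambda), S_j(\lambda)\mu)$ and $\tilde F_j^1(\lambda,\mu) = (F_j^1(\lambda), \mu)$, so
\[
  \partial_n(\lambda,\mu) \;=\; \sum_{j=1}^{n} (-1)^j \bigl[(F_j^0(\lambda), S_j(\lambda)\mu) - (F_j^1(\lambda), \mu)\bigr].
\]
Applying $\triangledown_{n-1}$ then expresses the left-hand side as a signed double sum indexed by $(j,\tau) \in \{1,\ldots,n\} \times \Sigma_{n-1}$. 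Meanwhile, substituting Equation~\eqref{eq:triangle-formula} into $\partial_n^{\mathcal P}$ from Remark~\ref{rmk:categorical-resolution} yields
\[
  \partial_n^{\mathcal P}\triangledown_n(\lambda,\mu) \;=\; \sum_{\sigma \in \Sigma_n} \operatorname{sgn}(\sigma)\Bigl[ (\lambda_2^\sigma,\dots,\lambda_n^\sigma,\mu) + \sum_{i=1}^{n-1}(-1)^i(\lambda_1^\sigma,\dots,\lambda_i^\sigma\lambda_{i+1}^\sigma,\dots,\lambda_n^\sigma,\mu) + (-1)^n (\lambda_1^\sigma,\dots,\lambda_{n-1}^\sigma,\lambda_n^\sigma\mu) \Bigr].
\]

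Next I would dispose of the middle terms (those with $1 \leq i \leq n-1$) via the involution $\sigma \leftrightarrow \sigma' := \sigma \circ (i,i+1)$. Because the factorization property is symmetric across distinct coordinate directions, the two-color piece at positions $i,i+1$ satisfies $\lambda_i^\sigma\lambda_{i+1}^\sigma = \lambda_i^{\sigma'}\lambda_{i+1}^{\sigma'}$ (both equal the submorphism of $\lambda$ spanning degrees $e_{C_{(\sigma(i))}} + e_{C_{(\sigma(i+1))}}$ at the appropriate location), while the entries $\lambda_k^\sigma$ for $k \notin \{i,i+1\}$ are unaffected; since $\operatorname{sgn}(\sigma') = -\operatorname{sgn}(\sigma)$ these contributions cancel in pairs. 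It then remains to match the two surviving families with the $F_j^1$ and $F_j^0$ contributions of $\triangledown_{n-1}\partial_n(\lambda,\mu)$. For the first family, group by $j = \sigma(1)$: then $\lambda_1^\sigma = R_j(\lambda)$ while $\lambda_2^\sigma \cdots \lambda_n^\sigma = F_j^1(\lambda)$, and the restriction $\sigma|_{\{2,\ldots,n\}}$ corresponds — via the order-preserving identifications $\{2,\ldots,n\} \cong \{1,\ldots,n-1\}$ and $\{1,\ldots,n\}\setminus\{j\} \cong \{1,\ldots,n-1\}$ — to a unique $\tau \in \Sigma_{n-1}$ such that $\lambda_{i+1}^\sigma = F_j^1(\lambda)_i^\tau$. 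A direct inversion count gives $\operatorname{sgn}(\sigma) = (-1)^{j-1}\operatorname{sgn}(\tau)$, so summing over such $\sigma$ yields $(-1)^{j-1}\triangledown_{n-1}(F_j^1(\lambda),\mu)$. The analogous analysis for $j = \sigma(n)$ gives $\lambda_n^\sigma = S_j(\lambda)$, $\lambda_1^\sigma \cdots \lambda_{n-1}^\sigma = F_j^0(\lambda)$, and $\operatorname{sgn}(\sigma) = (-1)^{n-j}\operatorname{sgn}(\tau)$; combined with the overall factor $(-1)^n$, this yields $(-1)^j \triangledown_{n-1}(F_j^0(\lambda), S_j(\lambda)\mu)$. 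Summing over $j$ and adding the two contributions reproduces $\triangledown_{n-1}\partial_n(\lambda,\mu)$ exactly.

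The hard part will be the sign bookkeeping in the final step: pinning down the correct order-preserving bijections that turn the restriction of $\sigma$ to $\{2,\ldots,n\}$ (or $\{1,\ldots,n-1\}$) into a bona fide element $\tau \in \Sigma_{n-1}$, verifying that the factorizations $\lambda_{i+1}^\sigma$ and $F_j^1(\lambda)_i^\tau$ of the relevant subcubes coincide as morphisms of $\Lambda$, and carrying out the inversion counts to pin down the signs $(-1)^{j-1}$ and $(-1)^{n-j}$. Everything else — the module-map property of $\triangledown_n$, the face computation in $\widetilde{\Lambda}$, and the involutive cancellation of middle terms — is essentially formal once the combinatorial dictionary with the cubical-to-simplicial triangulation is in place.
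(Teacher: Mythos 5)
Your proposal is correct and follows essentially the same route as the paper's proof: the trivial cases $n\leq 0$, the cancellation of the middle terms of $\partial_n^{\Presolution}\circ\triangledown_n$ via the transposition involution $\sigma\mapsto\sigma(i,i+1)$, and the matching of the surviving terms with the $F_j^0$/$F_j^1$ contributions of $\triangledown_{n-1}\circ\partial_n$ by partitioning $\Sigma_n$ according to $\sigma(n)=j$ and $\sigma(1)=j$ — which is exactly the paper's coset decomposition $\Sigma_n=\bigsqcup_j\kappa_j\Sigma_{n-1}=\bigsqcup_j\kappa_j\Sigma_{n-1}\kappa_1^{-1}$ phrased via order-preserving bijections instead of explicit cyclic permutations. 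Your claimed identifications (e.g.\ $\lambda_{i+1}^{\sigma}=F_j^1(\lambda)_i^{\tau}$) and signs $(-1)^{j-1}$ and $(-1)^{n-j}$ agree with the paper's computation, so the deferred bookkeeping goes through as you predict.
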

\begin{proof}
	The statement is obvious for $n < 0$. The case for $n = 0$ follows from the observation that $\left(\triangledown_0\right)_v (\mu)= \mu$ for any $v \in \operatorname{Obj}\Lambda$ and any vertex $ \mu\in \operatorname{Obj} \widetilde{\Lambda}(v)$. Thus we focus on the case for an arbitrary $n > 0$. 
	
	For any $j \in \{1, \ldots, n-1\}$, let $\rho_j \in \Sigma_{n}$ be the transposition $(j, j+1)$, which satisfies $\operatorname{sgn}(\rho_j) = -1$ and $\Sigma_{n} = A_n \sqcup A_n \rho_j$, where $A_n$ denotes the alternating subgroup in $\Sigma_{n}$. We notice that for any permutation $\sigma \in \Sigma_{n}$ and any $\lambda \in Q_n(\Lambda)$, we have
	\[ 
		\lambda_i^\sigma = \lambda_i^{\sigma \rho_j} \text{ for all } \ i \not\in \{j, j+1\}; \qquad \quad \lambda_j^\sigma \lambda_{j+1}^\sigma = \lambda_j^{\sigma \rho_j} \lambda_{j+1}^{\sigma \rho_j}.
	\]
	Thus for any $v \in \operatorname{Obj}\Lambda$ and $(\lambda, \mu) \in Q_n(\widetilde{\Lambda}(v))$, we have
	\begin{align*}
		\left(\partial_n^{\Presolution} \circ \triangledown_n\right)_v (\lambda, \mu) &= \sum_{\sigma \in \Sigma_{n}} \operatorname{sgn}(\sigma) \left( \vphantom{\sum_{1}^{2}} (\lambda_2^\sigma, \ldots, \lambda_n^\sigma, \mu) + (-1)^n (\lambda_1^\sigma, \ldots, \lambda_n^\sigma \mu)  \right. \\
		& \qquad\qquad\qquad  \left. + \sum_{j=1}^{n-1} (-1)^j (\lambda^\sigma_1, \ldots, \lambda_j^{\sigma} \lambda_{j+1}^\sigma, \ldots, \lambda_n^\sigma, \mu) \right) \\
		&= \sum_{\sigma \in \Sigma_{n}} \operatorname{sgn}(\sigma) \Big( \vphantom{1^2} (\lambda_2^\sigma, \ldots, \lambda_n^\sigma, \mu) + (-1)^n (\lambda_1^\sigma, \ldots, \lambda_n^\sigma \mu)  \Big) \\
		& \quad  + \sum_{j=1}^{n-1} (-1)^j \left( \sum_{\sigma \in A_{n}} (+1) \left(\lambda^\sigma_1, \ldots, \lambda_j^{\sigma} \lambda_{j+1}^\sigma, \ldots, \lambda_n^\sigma, \mu \right) \right. \\
		& \qquad \qquad \qquad \left. \vphantom{\sum_{\sigma \in A_{n}}} + (-1) \left(\lambda^{\sigma\rho_j}_1, \ldots, \lambda_j^{\sigma\rho_j} \lambda_{j+1}^{\sigma\rho_j}, \ldots, \lambda_n^{\sigma\rho_j}, \mu \right)  \right) \\
		&= \sum_{\sigma \in \Sigma_{n}} \operatorname{sgn}(\sigma) \Big( (\lambda_2^\sigma, \ldots, \lambda_n^\sigma, \mu) + (-1)^n (\lambda_1^\sigma, \ldots, \lambda_n^\sigma \mu) \Big).
	\end{align*}
	
	On the other hand, for any $j \in \{1, \ldots, n-1\}$, let $\kappa_j \in \Sigma_{n}$ be the cyclic permutation $(j, j+1, \ldots, n)$ and $\kappa_n = \operatorname{id} \in \Sigma_{n}$. Thus $\operatorname{sgn}(\kappa_j) = (-1)^{n-j}$ and $\Sigma_{n} = \bigsqcup_{j=1}^{n}  \kappa_j \Sigma_{n-1} = \bigsqcup_{j=1}^{n}  \kappa_j \Sigma_{n-1}  \kappa_1^{-1}$. More concretely, we have
	\[
		\kappa_j (i) =\begin{cases} i, & i < j \\ i+1, & j \leq i \leq n-1 \\  j, & i = n \end{cases} \; .
	\]	
	Notice that for any $(\lambda, \mu) \in Q_n(\widetilde{\Lambda}(v))$, we have
	\[ 
		\left(\partial_n\right)_v (\lambda, \mu) = \sum_{j=1}^n (-1)^j \Big( \left(F^0_j(\lambda), S_j(\lambda) \mu \right) - \left(F^1_j(\lambda), \mu \right) \Big) \; ,
	\]
	where, in our current notation, we have
	\[
		F^0_j(\lambda) = \lambda^{\kappa_j}_1 \ldots \lambda^{\kappa_j}_{n-1} , \ S_j(\lambda) = \lambda^{\kappa_j}_{n} , \ R_j(\lambda) = \lambda^{\kappa_j \kappa_1^{-1}}_{1} \text{and } F^1_j(\lambda) = \lambda^{\kappa_j \kappa_1^{-1}}_{2} \ldots \lambda^{\kappa_j \kappa_1^{-1}}_{n \vphantom{2}} \; .
	\]
	This implies, for any $\tau \in \Sigma_{n-1}$ and any $i \in \{1, \ldots, n-1\}$, that $\left(F^0_j(\lambda)\right)^{\tau}_i = \lambda^{\kappa_j{\tau}}_i$ and $\left(F^1_j(\lambda)\right)^{\tau}_{i} = \lambda^{\kappa_j{\tau} \kappa_1^{-1}}_{i+1}$. 
	Therefore, we compute
	\begin{align*}
		& \hspace{-.7cm}  \left(\triangledown_{n-1} \circ \partial_n\right)_v (\lambda, \mu) \\
		&= \sum_{j=1}^n (-1)^{j} \sum_{\tau \in \Sigma_{n-1}} \operatorname{sgn}(\tau) \bigg( \left( \left(F^0_j(\lambda)\right)^{\tau}_1 , \ldots, \left(F^0_j(\lambda)\right)^{\tau}_{n-1}, S_j(\lambda) \mu \right) \\
		& \qquad\qquad\qquad\qquad\qquad\qquad - \left( \left(F^1_j(\lambda)\right)^{\tau}_1 , \ldots, \left(F^1_j(\lambda)\right)^{\tau}_{n-1}, \mu \right) \bigg)  \\	
		&= \sum_{j=1}^n (-1)^{j} \sum_{\tau \in \Sigma_{n-1}} \operatorname{sgn}(\tau) \bigg( \left( \lambda^{\kappa_j{\tau}}_1 , \ldots, \lambda^{\kappa_j{\tau}}_{n-1}, \lambda^{\kappa_j}_{n} \mu \right) \\
		& \qquad\qquad\qquad\qquad\qquad\qquad - \left( \lambda^{\kappa_j{\tau} \kappa_1^{-1}}_2 , \ldots, \lambda^{\kappa_j{\tau} \kappa_1^{-1}}_{n}, \mu \right) \bigg)  \\	
		&= \sum_{j=1}^n \sum_{\tau \in \Sigma_{n-1}} \bigg( (-1)^{n} \operatorname{sgn}(\kappa_j\tau) \left( \lambda^{\kappa_j{\tau}}_1 , \ldots, \lambda^{\kappa_j{\tau}}_{n-1}, \lambda^{\kappa_j{\tau}}_{n} \mu \right) \\
		& \qquad\qquad\qquad + \operatorname{sgn}(\kappa_j\tau\kappa_1^{-1}) \left( \lambda^{\kappa_j{\tau} \kappa_1^{-1}}_2 , \ldots, \lambda^{\kappa_j{\tau} \kappa_1^{-1}}_{n}, \mu \right) \bigg)  \\
		&= \sum_{\sigma \in \Sigma_{n}} (-1)^n \operatorname{sgn}(\sigma)(\lambda_1^\sigma, \ldots, \lambda_n^\sigma \mu) + \sum_{\sigma' \in \Sigma_{n}}  \operatorname{sgn}(\sigma')  (\lambda_2^{\sigma'}, \ldots, \lambda_n^{\sigma'}, \mu)   \\
		&= \left(\partial_n^{\Presolution} \circ \triangledown_n\right)_v (\lambda, \mu) \; .
	\end{align*}
	This proves $\triangledown_{n-1} \circ \partial_n = \partial_n^{\Presolution} \circ \triangledown_n$ for all $n \in \Z$.
\end{proof}

\subsection{Mapping composable tuples to cubes}
In this subsection, we construct a chain map $\boxempty_*\colon \Presolution_*(\Lambda) \to \Z \widetilde{Q}_*(\widetilde{\Lambda})$. To streamline the notation, we carry out computations of cubical chains in terms of general rectangles instead of just cubes. Let us specify how we do this. 

\newcommand{\rectangularchain}[1]{\left\lceil {#1} \right\rfloor}

\begin{defn}
	Recall that for a morphism $\lambda$ in a $k$-graph, $C(\lambda)$ is the color type of $\lambda$. For any $K \subset \{1, \ldots, k\}$, we write $\Lambda^{K} = \{ \lambda \in \Lambda \colon C(\lambda) \subset K \}$. For any $n \in \N$, we let ${k \choose n}$ denote the collection of all subsets of $\{1, \ldots, k\}$ with cardinality $n$. Note that this is empty if $n > k$. For any $K \in {k \choose n}$ and any $i \in \{1, \ldots, n\}$, we write $K_{(i)}$ for the $i$th smallest number in $K$. 
	
	Also recall from Remark~\ref{rmk:Lambda-m-n} that for any $n,m \in \mathbb{N}^k$ with $0 \leq n \leq m \leq d(\lambda)$, we write $\lambda(n,m)$ for the image of $(n,m) \in \Omega^k_{\leq d(\lambda)}$ under the canonical $k$-graph morphism $\Omega^k_{\leq d(\lambda)} \to \Lambda$. For example, we have $\lambda (0, 0) = r(\lambda)$, $\lambda (0, d(\lambda)) = \lambda$, and $\lambda (d(\lambda), d(\lambda)) = s(\lambda)$. 
\end{defn}

\begin{defn}\label{def:rectangular-chain}
	For any $n \in \N$, any $K \in {k \choose n}$ and any $\lambda \in \Lambda^K$, writing $e_K = \sum_{j \in K}{e_j} \in \N^k$, we define $\rectangularchain{\lambda}_K \in \Z Q_n(\Lambda)$, the \emph{$K$-shaped rectangular chain} associated to $\lambda$, to be the sum
	\[
	\rectangularchain{\lambda}_K:= 	\sum_{m \in [0, d(\lambda)-e_K] \cap \N^k} \lambda(m, m+e_K)
	\]
	That is, we compute	$\rectangularchain{\lambda}_K$ by dividing the hyper-rectangle $[0, d(\lambda)]$ into $n$-dimensional unit cubes and evaluating $\lambda$ on each one.

	Observe that if $|C(\lambda)| < n$, then $ \rectangularchain{\lambda}_K = 0$ for any $K \in {k \choose n}$, in which case we say $ \rectangularchain{\lambda}_K$ is a \emph{degenerate} $K$-shaped rectangular chain.
\end{defn}

\begin{defn}\label{def:rectangular-faces}
	For any $n \in \N$ and any $K \in {k \choose n}$,  
	we define, for each $j \in \{1, \ldots, n\}$, 
	\begin{align*}
	\widehat{F}^0_{K,j} &\colon  \Lambda^K \to \Lambda^{K \setminus \{C_{(j)}\}} \, , & \lambda &\mapsto \lambda\big(0, d(\lambda) - d_{C_{(j)}}(\lambda) e_{C_{(j)}}\big) \, , \\
	S_{K,j} &\colon  \Lambda^K \to \Lambda^{ \{C_{(j)}\}} \, , & \lambda &\mapsto  \lambda\big(d(\lambda) - d_{C_{(j)}}(\lambda)e_{C_{(j)}}, d(\lambda)\big) \, , \\
	\widehat{F}^1_{K,j} &\colon  \Lambda^K \to \Lambda^{K \setminus \{C_{(j)}\}} \, , & \lambda &\mapsto  \lambda\big(d_{C_{(j)}} (\lambda) e_{C_{(j)}}, d(\lambda)\big)  \, , \\
	R_{K,j} &\colon  \Lambda^K \to \Lambda^{ \{C_{(j)}\}} \, , & \lambda &\mapsto  \lambda\big(0,  d_{C_{(j)}}(\lambda) e_{C_{(j)}}\big) \, .
	\end{align*}
	Thus $\lambda = \widehat{F}^0_{K,j}(\lambda) \, S_{K,j}(\lambda) =  R_{K,j} (\lambda) \, \widehat{F}^1_{K,j}(\lambda)$. We also write
	\[
		F^l_{K,j}(\lambda)=	\rectangularchain{\widehat{F}^l_{K,j}(\lambda)}_{K\backslash \{ C_{(j)}\}} \text{ for } l = 0,1 \; .
	\]
\end{defn}

\begin{lemma}\label{lem:rectangular-bdry}
	For any $n \in \N$, any $K \in {k \choose n}$, and any $\lambda \in \Lambda^K$, we have
	\[
		\partial_n \left( \rectangularchain{\lambda}_K \right) = \sum_{j=1}^n \sum_{l=0}^1(-1)^{j+l} {{F}^l_{K,j}(\lambda)}
	\]
\end{lemma}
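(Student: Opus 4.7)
The proof will be a direct telescoping computation. Applying the cubical boundary formula \eqref{eq:cubical-bdry} term by term to
\[
\rectangularchain{\lambda}_K = \sum_{m \in [0, d(\lambda)-e_K] \cap \N^k} \lambda(m, m+e_K),
\]
we get a double sum over the unit cubes $\eta_m := \lambda(m, m+e_K)$ and over the indices $1 \le j \le n$, $l \in \{0,1\}$. The plan is to fix a direction $j$ and show that all interior faces in direction $c_j := K_{(j)}$ cancel in pairs, leaving only the contributions from the two extreme layers; these extreme layers are precisely the rectangular chains $F^0_{K,j}(\lambda)$ and $F^1_{K,j}(\lambda)$.

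First I would assume $C(\lambda) = K$, so that no degeneracy occurs. For each unit cube $\eta_m$, the color type is exactly $K$ with $c_j$ being its $j$-th color, so Definition~\ref{def:cubical} gives $F^0_j(\eta_m) = \lambda(m, m+e_K-e_{c_j})$ and $F^1_j(\eta_m) = \lambda(m+e_{c_j}, m+e_K)$. These pick up signs $(-1)^j$ and $(-1)^{j+1}$ respectively. The key identity to verify by inspection is
\[
F^1_j(\eta_{m-e_{c_j}}) = \lambda(m, m + e_K - e_{c_j}) = F^0_j(\eta_m)
\]
whenever $m_{c_j} \ge 1$. Since the two sides come with opposite signs in the total $\partial_n$, they cancel. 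What survives for each $j$ is the collection of $F^0_j(\eta_m)$ with $m_{c_j} = 0$ (sign $(-1)^j$) and the $F^1_j(\eta_m)$ with $m_{c_j} = d_{c_j}(\lambda)-1$ (sign $(-1)^{j+1}$).

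Next I would identify the surviving sums with the rectangular chains on the boundary faces. Setting $\mu := \widehat{F}^0_{K,j}(\lambda) = \lambda(0, d(\lambda) - d_{c_j}(\lambda)e_{c_j})$, one checks from Definition~\ref{def:rectangular-chain} that $\rectangularchain{\mu}_{K\setminus\{c_j\}}$ is exactly $\sum_{m : m_{c_j} = 0} \lambda(m, m + e_K - e_{c_j})$, because the restriction $m'_{c_j} \le d(\mu)_{c_j} = 0$ in the defining sum for $\rectangularchain{\mu}_{K\setminus\{c_j\}}$ forces $m'_{c_j} = 0$, while the remaining constraints on $m'_{c_i}$ for $i \ne j$ are identical to those in the surviving $F^0_j$ sum. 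An analogous argument (using the shift $m \mapsto m - (d_{c_j}(\lambda)-1)e_{c_j}$) identifies the surviving $F^1_j$ sum with $F^1_{K,j}(\lambda) = \rectangularchain{\widehat{F}^1_{K,j}(\lambda)}_{K\setminus\{c_j\}}$. Summing over $j$ gives the claimed formula.

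Finally I would dispatch the degenerate case $|C(\lambda)| < n$: here $\rectangularchain{\lambda}_K = 0$, so I only need to show the right-hand side vanishes. If $|C(\lambda)| \le n-2$, then for every $j$ the color type of $\widehat{F}^l_{K,j}(\lambda)$ has size less than $n-1$, so each $F^l_{K,j}(\lambda)$ is itself degenerate. In the remaining case $|C(\lambda)| = n-1$, there is a unique $j_0$ with $K_{(j_0)} \notin C(\lambda)$; for $j \ne j_0$ the faces $F^l_{K,j}(\lambda)$ are again degenerate, while for $j = j_0$ we have $\widehat{F}^0_{K,j_0}(\lambda) = \widehat{F}^1_{K,j_0}(\lambda) = \lambda$, so the two terms $(-1)^{j_0}F^0_{K,j_0}(\lambda) + (-1)^{j_0+1}F^1_{K,j_0}(\lambda)$ cancel. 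The main obstacle is purely bookkeeping: matching indices and signs between the pointwise boundary of each unit cube and the global boundary faces of the rectangle; there is no conceptual difficulty.
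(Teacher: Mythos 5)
Your proof is correct and follows essentially the same route as the paper's: the paper's own proof is a three-line computation that expands $\partial_n$ over the unit cubes of $\rectangularchain{\lambda}_K$ and lets the interior faces telescope away, leaving exactly $F^0_{K,j}(\lambda)$ and $F^1_{K,j}(\lambda)$ with the stated signs. Your writeup is in fact more careful than the paper's, since you make the telescoping cancellation $F^1_j(\eta_{m-e_{c_j}}) = F^0_j(\eta_m)$ explicit and verify the degenerate cases $|C(\lambda)| < n$ (where the paper's identity holds only because each face chain is degenerate or the two $j_0$-faces cancel in pairs), details the published proof leaves implicit.
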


\begin{proof}
We compute:
\begin{align*}
 \partial_n ( \rectangularchain{\lambda}_K) &= \sum_{m \in [0, d(\lambda) - e_K] \cap \N^k} \partial_n(\lambda(m, m+e_K)) \\
 &= \sum_m \sum_{j=1}^n (-1)^j \left( \lambda(m, m+ e_{K \backslash C_{(j)}}) - \lambda(m+ e_{C_{(j)}}, m + e_K) \right)\\
 &= \sum_{j=1}^n \sum_{l =0}^1 (-1)^{j+l} F^l_{K, j}(\lambda). \qedhere
 \end{align*}

\end{proof}

\begin{lemma}\label{lem:rectangular-bdry-concat}
	For any $n \in \N$, any $K \in {k \choose n}$, any $j \in \{1, \ldots, n\}$, and any $\lambda, \mu \in \Lambda^K$, if $\widehat{F}^1_{K,j}(\lambda) = \widehat{F}^0_{K,j}(\mu)$, then we have $\lambda \, S_{K,j}(\mu) = R_{K,j}(\lambda) \, \mu$ and 
	\begin{equation}
		\rectangularchain{\lambda}_K + \rectangularchain{\mu}_K = \rectangularchain{\lambda \, S_{K,j}(\mu)}_K = \rectangularchain{R_{K,j}(\lambda) \, \mu}_K \; . 
	\label{eq:summing-rectangles}
	\end{equation}
\end{lemma}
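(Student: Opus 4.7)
The plan is to first establish the morphism-level identity $\lambda\, S_{K,j}(\mu) = R_{K,j}(\lambda)\, \mu$ using the factorization property of $\Lambda$, and then derive the chain-level identity by carefully partitioning the sum that defines $\rectangularchain{\lambda S_{K,j}(\mu)}_K$ into two pieces that correspond exactly to $\rectangularchain{\lambda}_K$ and $\rectangularchain{\mu}_K$.

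For the morphism identity, I would start from the defining factorizations $\lambda = R_{K,j}(\lambda)\, \widehat{F}^1_{K,j}(\lambda)$ and $\mu = \widehat{F}^0_{K,j}(\mu)\, S_{K,j}(\mu)$ in Definition~\ref{def:rectangular-faces}. Using the hypothesis $\widehat{F}^1_{K,j}(\lambda) = \widehat{F}^0_{K,j}(\mu)$, both $\lambda\, S_{K,j}(\mu)$ and $R_{K,j}(\lambda)\, \mu$ rewrite as $R_{K,j}(\lambda)\, \widehat{F}^1_{K,j}(\lambda)\, S_{K,j}(\mu)$. Set $\nu := \lambda\, S_{K,j}(\mu) = R_{K,j}(\lambda)\, \mu$; then $d(\nu) = d(\lambda) + d_{C_{(j)}}(\mu)\, e_{C_{(j)}}$, and the hypothesis forces $d_i(\lambda) = d_i(\mu) = d_i(\nu)$ for every $i \in K \setminus \{C_{(j)}\}$.

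Next, I would partition the indexing set $[0, d(\nu) - e_K] \cap \N^k$ of $\rectangularchain{\nu}_K$ according to whether $m_{C_{(j)}} \leq d_{C_{(j)}}(\lambda) - 1$ or $m_{C_{(j)}} \geq d_{C_{(j)}}(\lambda)$. On the first piece, $[m, m+e_K] \subseteq [0, d(\lambda)]$, so the factorization $\nu = \lambda\cdot S_{K,j}(\mu)$ and the factorization property of $\Lambda$ force $\nu(m, m+e_K) = \lambda(m, m+e_K)$; the indices then range over exactly $[0, d(\lambda) - e_K] \cap \N^k$, producing $\rectangularchain{\lambda}_K$. On the second piece, $[m, m+e_K]$ lies in the $\mu$-part of the factorization $\nu = R_{K,j}(\lambda)\, \mu$, since $m \geq d_{C_{(j)}}(\lambda)\, e_{C_{(j)}} = d(R_{K,j}(\lambda))$ coordinate-wise. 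Substituting $m' := m - d_{C_{(j)}}(\lambda)\, e_{C_{(j)}}$, the factorization property gives $\nu(m, m+e_K) = \mu(m', m'+e_K)$, and one checks that $m'$ now ranges over exactly $[0, d(\mu) - e_K] \cap \N^k$, yielding $\rectangularchain{\mu}_K$. Adding the two partial sums completes the proof.

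The only real subtlety is the bookkeeping for the index ranges after the translation: one must use $d_i(\mu) = d_i(\nu)$ for $i \neq C_{(j)}$ to see that $m'_i$ sweeps $[0, d_i(\mu) - 1]$ for such $i$, and use $d_{C_{(j)}}(\nu) = d_{C_{(j)}}(\lambda) + d_{C_{(j)}}(\mu)$ to see that $m'_{C_{(j)}}$ sweeps $[0, d_{C_{(j)}}(\mu) - 1]$. Everything else is a direct application of the factorization property, and no case analysis beyond the single split at $m_{C_{(j)}} = d_{C_{(j)}}(\lambda)$ is required.
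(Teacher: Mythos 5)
Your proposal is correct and matches the paper's approach: the paper's proof simply asserts that the first identity is immediate from Definition~\ref{def:rectangular-faces} and that Equation~\eqref{eq:summing-rectangles} follows by ``a direct computation'' from Definitions~\ref{def:rectangular-chain} and~\ref{def:rectangular-faces}, and your argument is precisely that computation carried out in full. In particular, your splitting of the index set $[0, d(\nu)-e_K]\cap\N^k$ at $m_{C_{(j)}} = d_{C_{(j)}}(\lambda)$, together with the degree bookkeeping $d_i(\lambda)=d_i(\mu)$ for $i \in K\setminus\{C_{(j)}\}$, is exactly the verification the paper leaves implicit.
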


Intuitively speaking, this lemma says $\lambda$ and $\mu$ can be glued along their common face, which is an $(n-1)$-dimensional hyperrectangle given by $\widehat{F}^1_{K,j}(\lambda) = \widehat{F}^0_{K,j}(\mu)$, to form a larger $n$-dimensional hyperrectangle.

\begin{proof}

The first assertion is an immediate consequence of Definition \ref{def:rectangular-faces}, while the second comes from a direct computation using Definition~\ref{def:rectangular-chain} and~\ref{def:rectangular-faces}. 
\end{proof}

We are now in a position to define the maps $\boxempty_n$, for $n \in \Z$.  These maps are described pictorially in the diagram of Remark \ref{rmk:box-diagram}, which may help the reader to follow the construction below.

\begin{constr}\label{constr:box-map}
	For any $n \in \N$, any $v \in \operatorname{Obj}\Lambda$, any composable $(n + 1)$-tuple $(\lambda_0, \lambda_1, \ldots, \lambda_n)$ in $\Lambda^{* (n+1)}v$, and any $K \in {k \choose n}$, we define 
	\[
		\widehat{\boxempty}_n (\lambda_0, \ldots, \lambda_n; K) := \left( \lambda(b, c),  \lambda(c, d(\lambda)) \right) \in \widetilde{\Lambda} (v)  \; ,
	\]
	where $\lambda = \lambda_0 \cdots \lambda_n$, 
	\[
		b =  \sum_{i=1}^{n} \sum_{j=K_{(i)}+1 }^{k} d_{j}(\lambda_{i-1}) e_j \quad\text{ and }\quad c = \sum_{i=1}^{n} \sum_{j= K_{(i)}}^{k}  d_{j}(\lambda_{i-1}) e_j .
	\]
	Recall that $K_{(i)}$ denotes the $i$th smallest entry in $K$.
	
	We sometimes write $\widehat{\boxempty}_{n,v}$ to emphasize $v$. Note that 
	\begin{equation}\label{eq:box-map-degree}
		\widetilde{d}(\widehat{\boxempty}_n(\lambda_0, \ldots, \lambda_n; K))  = c-b = \sum_{i=1}^{n}  d_{K_{(i)}}(\lambda_{i-1}) e_{K_{(i)}}\; ,
	\end{equation}
	and thus $C (\widehat{\boxempty}_n (\lambda_0, \lambda_1, \ldots, \lambda_n; K) ) \subset K$, which allows us to define
	\[
		\boxempty_n (\lambda_0,  \ldots, \lambda_n; K) := \rectangularchain{\widehat{\boxempty}_n (\lambda_0,  \ldots, \lambda_n; K)}_K \in \Z \widetilde{Q}_n(\widetilde{\Lambda}(v)) \; .
	\] 
	Hence we can define a homomorphism 
	\[
		\left( \boxempty_n \right)_v \colon \Presolution_*(\Lambda)(v) = \Z \Lambda^{*(n+1)} \to \Z \widetilde{Q}_*(\widetilde{\Lambda} (v)) \; .
	\]
	by linearly extending the prescription
	\begin{equation}\label{eq:box-map-sum-K}
		\left( \boxempty_n \right)_v (\lambda_0,  \ldots, \lambda_n) = \sum_{ { K \in {k \choose n} } } \boxempty_n (\lambda_0,  \ldots, \lambda_n; K) \; .		
	\end{equation}
	It is clear that for any $\mu \in v \Lambda$, we have
	\begin{equation}\label{eq:box-map-module}
		\widehat{\boxempty}_n (\lambda_0,  \ldots, \lambda_n \mu; K) = \widehat{\boxempty}_n (\lambda_0,  \ldots, \lambda_n; K) \cdot \widetilde{\Lambda}(\mu) \;.		
	\end{equation}
	Thus for each $n \in \N$, we have constructed a $\Lambda$-module map
	\[
		\boxempty_n \colon \Presolution_n(\Lambda)  \to \Z \widetilde{Q}_n(\widetilde{\Lambda}) \; .
	\]
	For $n = -1$, we define $\boxempty_n \colon \Presolution_n(\Lambda)  \to \Z \widetilde{Q}_n(\widetilde{\Lambda})$ to be the identity map on $\Z^{\Lambda}$. We also define $\boxempty_n$ to be the zero map for $n <-1$.
\end{constr}

We observe that $\boxempty_n = 0$ if $n > k$. 

\begin{rmk}\label{rmk:box-diagram}

	The following diagram may help with visualizing $\widehat{\boxempty}_n(\lambda_0, \ldots, \lambda_n; K)$.    
	\begin{equation}
		\begin{array}{ccccc}
		\multicolumn{1}{|c}{d_{1}(\lambda_{0})} & d_{1}(\lambda_{1}) & \cdots & {d_{1}(\lambda_{n-1})}  & d_{1}(\lambda_{n}) \\
		\multicolumn{1}{|c}{\vdots} & \vdots & \cdots  & \vdots  & \vdots \\
		\cline{1-1} 
		\multicolumn{1}{|c|}{d_{K_{(1)}}(\lambda_{0})} & \vdots & \cdots  & \vdots  & \vdots \\
		\cline{1-1} 
		\multicolumn{1}{c|}{\vdots} & \vdots & \cdots  & \vdots  & \vdots \\
		\cline{2-2} 
		\vdots & \multicolumn{1}{|c|}{d_{K_{(2)}}(\lambda_{1})} & \cdots  & \vdots  & \vdots \\
		\cline{2-2} 
		\vdots & \multicolumn{1}{c|}{\vdots} & \cdots  & \vdots  & \vdots \\
		\vdots & \vdots &  {\cdots} & \vdots  & \vdots \\
		\vdots & \vdots &  \multicolumn{1}{c|}{\cdots} & {\vdots}  & \vdots \\
		\cline{4-4} 
		\vdots & \vdots & \cdots  & \multicolumn{1}{|c|}{d_{K_{(n)}}(\lambda_{n-1})}  &   \vdots \\
		\cline{4-4} 
		\vdots & \vdots & \cdots  & \multicolumn{1}{c|}{\vdots}   & \vdots \\
		d_{k}(\lambda_{0}) & d_{k}(\lambda_{1}) & \cdots & d_{k}(\lambda_{n-1})   & \multicolumn{1}{|c}{  d_{k}(\lambda_{n})} 
		\end{array}
		\label{eq:diagram-box}
	\end{equation}
	\begin{itemize}
		\item The degree of $\lambda(b, c)$ in $\Lambda$, or equivalently, that of $\widehat{\boxempty}_n(\lambda_0, \ldots, \lambda_n; K)$ in $\widetilde{\Lambda}(v),$ is given by adding up the entries in the boxes. 
		\item The degree of $\lambda(c, d(\lambda))$ in $\Lambda$, which acts as the source of $\widehat{\boxempty}_n(\lambda_0, \ldots, \lambda_n; K) $ in $\widetilde{\Lambda}(v)$, is given by adding up the entries above and to the right of the boxes. 
		\item If we add up the two sums above, we obtain the degree of $\lambda(b, d(\lambda))$ in $\Lambda$, which acts as the range of $\widehat{\boxempty}_n(\lambda_0, \ldots, \lambda_n; K) $ in $\widetilde{\Lambda}(v)$.
	\end{itemize}
\end{rmk}

Our main task in this subsection is to prove the following: 

\begin{thm}\label{thm:box-is-chain-map}
	The sequence of maps $\left(\boxempty_n \colon \Presolution_n(\Lambda) \to \Z {\widetilde{Q}}_n( \widetilde{\Lambda}) \right)_{n \in \Z}$ constitutes a map of chain complexes, that is, $\boxempty_{n-1} \circ \partial_{n}^{\Presolution} = \partial_n \circ \boxempty_{n}$ for all $n \in \Z$.
\end{thm}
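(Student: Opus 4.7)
My plan is a direct term-by-term computation. For $n<0$ both sides are identically zero, and for $n=0$ I would verify immediately that $\partial_{0}\boxempty_{0}(\lambda) = \partial_{0}(r(\lambda),\lambda) = 1 = \boxempty_{-1}\partial_{0}^{\mathcal{P}}(\lambda)$ in $\Z^\Lambda$. For $n \geq 1$, the plan is to expand both sides fully and then reorganize. Applying Lemma~\ref{lem:rectangular-bdry} gives
\[
(\partial_{n}\boxempty_{n})(\lambda_{0},\dots,\lambda_{n}) = \sum_{K \in \binom{k}{n}} \sum_{j=1}^{n} \sum_{l=0}^{1} (-1)^{j+l}\, F_{K,j}^{l}\bigl(\widehat{\boxempty}_{n}(\lambda_{0},\dots,\lambda_{n};K)\bigr),
\]
and, via the formula for $\partial_{n}^{\mathcal{P}}$ and the definition of $\boxempty_{n-1}$ as a sum over $K'\in\binom{k}{n-1}$, the left-hand side expands into a double sum containing a ``drop-$\lambda_0$'' term, $n-1$ middle ``merge-columns'' terms, and a final term that I rewrite using Equation~\eqref{eq:box-map-module} to pull out the module action of $\lambda_n$.

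The heart of the plan is to reindex the RHS by $K' := K \setminus \{K_{(j)}\} \in \binom{k}{n-1}$ and $m := K_{(j)}$. For fixed $K'$ and $j$, $m$ ranges over the ``gap'' $(K'_{(j-1)}, K'_{(j)})$ under the conventions $K'_{(0)} := 0$ and $K'_{(n)} := k+1$. The core combinatorial identity, which I would derive by a direct computation from the factorization property within the box diagram of Remark~\ref{rmk:box-diagram}, is that for consecutive $m, m+1$ in a single gap,
\[
\widehat{F}^{0}_{K' \cup \{m\}, j}\bigl(\widehat{\boxempty}_{n}(\ldots)\bigr) = \widehat{F}^{1}_{K' \cup \{m+1\}, j}\bigl(\widehat{\boxempty}_{n}(\ldots)\bigr);
\]
since these carry opposite signs $(-1)^j$ and $(-1)^{j+1}$, they cancel pairwise and the inner $m$-sum telescopes to its boundary contributions. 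On the LHS side, I would use Lemma~\ref{lem:rectangular-bdry-concat} to decompose each merge-columns chain $\rectangularchain{\widehat{\boxempty}_{n-1}(\dots,\lambda_{i-1}\lambda_{i},\dots;K')}_{K'}$ into finer rectangular chains whose morphisms single out the individual columns $\lambda_{i-1}$ and $\lambda_i$, so that the pieces match precisely the RHS telescoping survivors.

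After these reorganizations, the two sides agree term by term for each $K'$, and summing over $K'$ completes the proof. The main obstacle is the combinatorial bookkeeping: verifying the factorization-property face identity in every configuration of the box diagram, keeping careful track of signs through the telescoping and the module-action rewriting, and handling degenerate corner cases --- empty gaps, rectangular chains that become zero because $d_{K'_{(j)}}(\lambda_{\bullet})=0$ in some column, and boundary indices where $1 \in K'$ or $k \in K'$ remove one of the ``extreme'' survivors. Each degenerate case reduces to a routine consistency check, but managing all of them simultaneously is the most delicate part of the argument.
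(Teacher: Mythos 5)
Your proposal is correct and follows essentially the same route as the paper's proof: your ``core combinatorial identity'' $\widehat{F}^{0}_{K'\cup\{m\},j}\bigl(\widehat{\boxempty}_n(\cdots)\bigr) = \widehat{F}^{1}_{K'\cup\{m+1\},j}\bigl(\widehat{\boxempty}_n(\cdots)\bigr)$ is exactly the content of Lemma~\ref{lem:8-15} (both faces equal the common rectangle $\widehat{\Xi}(\lambda_0,\ldots,\lambda_n;K',m)$ of Construction~\ref{constr:box-face-map}), and your gap-wise telescoping with interior survivors glued into merge terms via Lemma~\ref{lem:rectangular-bdry-concat} reproduces Lemma~\ref{lem:8-17} together with the paper's $q$-telescoping and the boundary identifications \eqref{eq:box-0-Xi}, \eqref{eq:box-n-Xi}. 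The only difference is cosmetic: you run the computation from the cubical side $\partial_n\circ\boxempty_n$ and reindex by $(K',m)$, whereas the paper starts from $\boxempty_{n-1}\circ\partial_n^{\Presolution}$ and telescopes over $q\in\{0,\ldots,k\}$ using $m(J,q)$.
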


For the proof, we will need to describe the elements in $\Z {\widetilde{Q}}_{n-1}( \widetilde{\Lambda})$ which arise when we compute  $\boxempty_{n-1} \circ \partial_{n}^{\Presolution} (\lambda_0, \ldots, \lambda_n)$ and $ \partial_n \circ \boxempty_{n}(\lambda_0, \ldots, \lambda_n)$.   These $(n-1)$-cubes, which we denote by $\widehat  \Xi(\lambda_0, \ldots, \lambda_n; J, q)$, are constructed as follows.
\begin{constr}\label{constr:box-face-map}
	For any $J \in {k \choose {n-1}}$, recall that $J_{(i)}$ is the $i$th smallest number in $J$ for $i \in \{1, \ldots, n-1 \}$. We also write $J_{(0)} = 0$ and $J_{(n)} = k+1$. For  any $q\in \{0, 1, \ldots, k \}$, define
	\[
		m(J, q) = \max\{i \in \{0, \ldots, n-1\} \colon  J_{(i)} \leq q \} \; .
	\]
	Observe that for any $J \in {k \choose n-1}$, any $l \in \{0, \ldots, n-1\}$ and any $q \in \{0, \ldots, k\}$, 
	\begin{equation} \label{eq:m-J-q-l}
		m(J, q) = l \text{~if~and~only~if~} J_{(l)} \leq q < J_{(l+1)} \; .
	\end{equation}
	In particular, we have
	\begin{align}
		\label{eq:m-J-0} m(J, 0) &= 0 \; , \\
		\label{eq:m-J-k} m(J, k) &= n-1  \; ,\\
		\label{eq:m-J-q} m(J, q-1) &= 	\begin{cases}
											m(J, q) & \text{if~} q \not\in J \; , \\
											m(J, q)-1  & \text{if~} q \in J \; . 
										\end{cases} 
	\end{align}
	For any $J \in {k\choose{n-1}}$, $q \in \{ 0, \ldots, n-1\}$, any $v \in \operatorname{Obj}\Lambda$ and any $(\lambda_0,  \ldots, \lambda_n) \in \Lambda^{*(n+1)}v$, writing $m$ for $m(J, q)$, we define $\widehat{\Xi}(\lambda_0,  \ldots, \lambda_n ; J, q)$ to be the unique morphism in $\widetilde{\Lambda}(s(\lambda_n))$ with range 
	\[
		\lambda\left( \sum_{i =1}^{m} \sum_{j=J_{(i)}+1}^{k} d_{j}(\lambda_{i-1}) e_j + \sum_{j=q+1}^k d_j(\lambda_{m}) e_j + \sum_{i =m+1}^{n-1} \sum_{j=J_{(i)}+1}^{k} d_{j}(\lambda_{i}) e_j \, , ~ d(\lambda_0 \cdots \lambda_n) \right)
	\]
	and source 
	\[
		\lambda\left( \sum_{i =1}^{m} \sum_{j=J_{(i)}}^{k } d_{j}(\lambda_{i-1}) e_j + \sum_{j=q+1}^k d_j(\lambda_{m}) e_j + \sum_{i =m+1}^{n-1} \sum_{j=J_{(i)}}^{k} d_{j}(\lambda_{i}) e_j\, , ~  d(\lambda_0 \cdots \lambda_n) \right) \; .
	\]
	Note that 
	\[
		\widetilde{d}(\widehat{\Xi}(\lambda_0,  \ldots, \lambda_n ; J, q))  = \sum_{i =1}^{m} d_{J_{(i)}}(\lambda_{i-1}) e_{J_{(i)}} + \sum_{i =m+1}^{n-1} d_{J_{(i)}}(\lambda_{i})e_{J_{(i)}}
	\]
	and thus $C (\widehat{\Xi}(\lambda_0,  \ldots, \lambda_n ; J, q) ) \subset J$, which allows us to define
	\[
		\Xi(\lambda_0,  \ldots, \lambda_n ; J, q) := \rectangularchain{\widehat{\Xi}(\lambda_0,  \ldots, \lambda_n ; J, q)}_{J} \in \Z \widetilde{Q}_{n-1}(\widetilde{\Lambda}(v)) \; .
	\] 
\end{constr}

\begin{rmk}\label{rmk:box-face-diagram}
Diagrammatically, the source $\widetilde{s} \left( \widehat{\Xi}(\lambda_0,  \ldots, \lambda_n ; J, q) \right)$ is given by evaluating $\lambda$ on the sum of the entries above
and to the right of the boxes in the diagram below; the range $\widetilde{r} \left( \widehat{\Xi}(\lambda_0,  \ldots, \lambda_n ; J, q) \right)$ is obtained by further adding in the value of $\lambda$ on the boxes. 
\[
\begin{array}{cccccccc}
\multicolumn{1}{|c}{d_{1}(\lambda_{0})} & \cdots & d_{1}(\lambda_{m-1}) & d_{1}(\lambda_{m}) & d_{1}(\lambda_{m+1}) & \cdots & {d_{1}(\lambda_{n-1})}  & d_{1}(\lambda_{n}) \\
\multicolumn{1}{|c}{\vdots} & \cdots & \vdots & \vdots  & \vdots & \cdots & \vdots  & \vdots \\
\cline{1-1} 
\multicolumn{1}{|c|}{d_{J_{(1)}}(\lambda_{0})} & \cdots & \vdots & \vdots  & \vdots & \cdots & \vdots  & \vdots \\
\cline{1-1} 
\multicolumn{1}{c|}{\vdots} & \cdots & \vdots & \vdots  & \vdots & \cdots & \vdots  & \vdots \\
\vdots & \cdots & \vdots & \vdots  & \vdots & \cdots & \vdots  & \vdots \\
\vdots & \multicolumn{1}{c|}{\cdots} & \vdots & \vdots  & \vdots & \cdots & \vdots  & \vdots \\
\cline{3-3} 
\vdots & \cdots  & \multicolumn{1}{|c|}{d_{J_{(m)}}(\lambda_{m-1})} & \vdots & \vdots & \cdots & \vdots  & \vdots \\
\cline{3-3} 
\vdots & {\cdots} & \multicolumn{1}{c|}{\vdots} & \vdots  & \vdots & \cdots & \vdots  & \vdots \\
\vdots & \cdots  & \multicolumn{1}{c|}{\vdots} & d_{q}(\lambda_{m}) &  \multicolumn{1}{c}{\vdots} & \cdots & \vdots  & \vdots \\
\cline{4-4} 
\vdots & {\cdots} & \vdots & \multicolumn{1}{c|}{\vdots} &  \vdots & \cdots & \vdots  & \vdots \\
\cline{5-5} 
\vdots & \cdots  & \vdots &  \vdots & \multicolumn{1}{|c|}{d_{J_{(m+1)}}(\lambda_{m+1})} & {\cdots} & \vdots  & \vdots \\
\cline{5-5} 
\vdots & {\cdots} & \vdots &  \vdots & \multicolumn{1}{c|}{\vdots} & \cdots & \vdots  & \vdots \\
\vdots  & \cdots  & \vdots & \vdots & \vdots & \cdots  & \vdots  & \vdots \\
\vdots  & \cdots  & \vdots & \vdots & \vdots & \cdots  & \multicolumn{1}{|c}{\vdots}  & \vdots \\
\cline{7-7} 
\vdots  & \cdots  & \vdots & \vdots & \vdots & \cdots  & \multicolumn{1}{|c|}{d_{J_{(n-1)}}(\lambda_{n-1})}  &   \vdots \\
\cline{7-7} 
\vdots  & \cdots  & \vdots & \vdots & \vdots & \cdots  & \multicolumn{1}{c|}{\vdots}   & \vdots \\
d_{k}(\lambda_{0}) & \cdots & d_{k}(\lambda_{m-1}) & d_{k}(\lambda_{m}) & d_{k}(\lambda_{m+1}) & \cdots & d_{k}(\lambda_{n-1})   & \multicolumn{1}{|c}{d_{k}(\lambda_{n})} 
\end{array}
\]
\end{rmk}

In the following, Lemmas \ref{lem:8-15} and \ref{lem:8-17} respectively establish that, as claimed, the rectangles $\Xi(\lambda_0, \ldots, \lambda_n; J, q)$ appear in $\partial_n \circ \boxempty_n(\lambda_0, \ldots, \lambda_n)$ and $\boxempty_{n-1} \circ \partial_n^{\Presolution}(\lambda_0, \ldots, \lambda_n)$. 
The reader is encouraged to use the above diagrams to follow the proofs of these lemmas. 

\begin{lemma}	
\label{lem:8-15}
	For any $n \in \N$, any $v \in \operatorname{Obj}\Lambda$, any $(\lambda_0, \lambda_1, \ldots, \lambda_n) \in \Lambda^{* (n+1)}v$, any $K \in {k \choose n}$, any $l \in \{ 1, \ldots, n \}$ and any $\varepsilon \in \{0,1\}$, we have
	\begin{equation} \label{eq:Fbox-K}
		F^\varepsilon_{K, l}( \widehat{\boxempty}_n(\lambda_0, \ldots, \lambda_n; K)) = \Xi(\lambda_0,  \ldots, \lambda_n ; K \setminus K_{(l)}, K_{(l)} - \varepsilon) \; .
	\end{equation}
\end{lemma}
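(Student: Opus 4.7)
The plan is to reduce Equation~\eqref{eq:Fbox-K} to a comparison of single morphisms in the $k$-graph $\widetilde{\Lambda}(v)$, then to verify the equality by direct index manipulation using the factorization property in $\Lambda$.

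First, both sides of~\eqref{eq:Fbox-K} are rectangular chains of shape $J := K \setminus \{K_{(l)}\}$: the left-hand side by Definition~\ref{def:rectangular-faces} (applied inside $\widetilde{\Lambda}(v)$), and the right-hand side directly by Construction~\ref{constr:box-face-map}. Since each rectangular chain $\rectangularchain{\nu}_J$ is uniquely determined by the morphism $\nu$, and since in turn a morphism $(\eta, \mu) \in \widetilde{\Lambda}(v)$ is determined by its source $\mu = \tilde s(\eta,\mu)$ together with its range $\eta \mu = \tilde r(\eta,\mu)$ (using factorization in $\Lambda$), it suffices to check that the two morphisms $\widehat{F}^\varepsilon_{K,l}(\widehat{\boxempty}_n(\lambda_0, \ldots, \lambda_n; K))$ and $\widehat{\Xi}(\lambda_0, \ldots, \lambda_n; K \setminus \{K_{(l)}\}, K_{(l)} - \varepsilon)$ have matching sources and matching ranges inside $\widetilde{\Lambda}(v)$.

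Next, I would compute the left-hand side directly. Writing $\omega = \widehat{\boxempty}_n(\lambda_0, \ldots, \lambda_n; K) = (\lambda(b, c), \lambda(c, d(\lambda)))$ with $\lambda = \lambda_0 \cdots \lambda_n$ as in Construction~\ref{constr:box-map}, Equation~\eqref{eq:box-map-degree} gives $\tilde d_{K_{(l)}}(\omega) = d_{K_{(l)}}(\lambda_{l-1})$. Translating Definition~\ref{def:rectangular-faces} through the faithful, degree-preserving forgetful functor $\widetilde{\Lambda}(v) \to \Lambda$ of Remark~\ref{rmk:future-path-graph}, I would read off that $\widehat{F}^0_{K,l}(\omega)$ has source $\lambda(c - d_{K_{(l)}}(\lambda_{l-1}) e_{K_{(l)}}, d(\lambda))$ and range $\lambda(b, d(\lambda))$, while $\widehat{F}^1_{K,l}(\omega)$ has source $\lambda(c, d(\lambda))$ and range $\lambda(b + d_{K_{(l)}}(\lambda_{l-1}) e_{K_{(l)}}, d(\lambda))$.

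For the right-hand side, I set $J = K \setminus \{K_{(l)}\}$ and $q = K_{(l)} - \varepsilon$. The critical observation is that $m(J, q) = l - 1$ in both cases, which follows from $J_{(l-1)} = K_{(l-1)} < K_{(l)}$ and $J_{(l)} = K_{(l+1)} > K_{(l)}$. Plugging $m = l - 1$ into the source and range formulas of Construction~\ref{constr:box-face-map}, splitting each inner index sum at $i = l$, and substituting $J_{(i)} = K_{(i)}$ for $i < l$, $J_{(i)} = K_{(i+1)}$ for $i \geq l$, together with $q + 1 = K_{(l)} + (1 - \varepsilon)$, a straightforward regrouping recovers exactly the source and range expressions for the left-hand side computed above, completing the proof. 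The main obstacle is purely bookkeeping: there is no conceptual subtlety, but keeping the two index conventions (those indexing $K$ versus those indexing $J$) aligned while splitting the various sums requires careful indexing.
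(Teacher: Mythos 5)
Your proposal is correct and follows essentially the same route as the paper's proof: both establish the stronger morphism-level identity $\widehat{F}^\varepsilon_{K,l}\big( \widehat{\boxempty}_n(\lambda_0,\ldots,\lambda_n;K)\big) = \widehat{\Xi}\big(\lambda_0,\ldots,\lambda_n; K\setminus K_{(l)}, K_{(l)}-\varepsilon\big)$ by computing $m\big(K\setminus K_{(l)}, K_{(l)}-\varepsilon\big) = l-1$ and then matching sources and ranges through the same sum-splitting and reindexing ($J_{(i)}=K_{(i)}$ for $i<l$, $J_{(i)}=K_{(i+1)}$ for $i\geq l$), after which equality of the rectangular chains follows by applying $\rectangularchain{\,\cdot\,}_{J}$. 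The only (immaterial) difference is your justification that equal source and range force equality of the two morphisms: you derive it from cancellation via the factorization property in $\Lambda$, while the paper invokes the initial object of $\widetilde{\Lambda}(v)$ \textemdash\ both are valid, and indeed the paper's initial-vertex uniqueness was itself proved by the same cancellation argument.
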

\begin{proof}
Observe first that the $l$th front face of $\widehat{\boxempty}_n(\lambda_0, \ldots, \lambda_n; K)$ is given by 
\[ \widehat{F}^0_{K,l}(\lambda(b,c), \lambda(c, d(\lambda))) = (\lambda(b, c- d_{K_{(l)}}(\lambda_{l-1})) e_{K_{(l)}}, \lambda(c-d_{K_{(l)}}(\lambda_{l-1}) e_{K_{(l)}}, d(\lambda))),\]
corresponding to replacing the $d_{K_{(l)}}(\lambda_{l-1})$ box in the diagram \eqref{eq:Fbox-K} by its {lower left edges},
while the back $l$th face is given by taking only the upper right 
edges of the box labeled $d_{K_{(l)}}(\lambda_{l-1})$:
\[ \widehat{F}^1_{K,l}(\lambda(b,c), \lambda(c, d(\lambda))= (\lambda(b + d_{K_{(l)}}(\lambda_{l-1}) e_{K_{(l)}}, c), \lambda(c, d(\lambda))).\]

		To prove the Lemma, we will actually prove a stronger equation: namely,
	\[
		\widehat{F}^\varepsilon_{K, l}( \widehat{\boxempty}_n(\lambda_0, \ldots, \lambda_n; K)) = \widehat{\Xi}(\lambda_0,  \ldots, \lambda_n ; K \setminus K_{(l)}, K_{(l)} - \varepsilon) \; .
	\]
	Writing $J = K \setminus K_{(l)}$, we have 
	\begin{equation}
	\label{eq:J-K}
		J_{(i)} = \begin{cases}
			K_{(i)} \, , & 1 \leq i < l \\
			K_{(i+1)} \, , & l \leq i \leq n-1 
		\end{cases} \; .
	\end{equation}
	In particular, the fact that $K_{(l-1)} < K_{(l)} < K_{(l+1)}$ implies $J_{(l-1)} \leq K_{(l)} - \varepsilon < J_{(l)}$; thus $m := m(J, K_{(l)} - \varepsilon) = l - 1$. 
	Also,  Equation~\eqref{eq:box-map-degree} tells us  that 
	\[
		\widetilde{d}_l (\widehat{\boxempty}_n(\lambda_0, \ldots, \lambda_n; K)) = d_{K_{(l)}}(\lambda_{l-1}) e_{K_{(l)}} \; .
	\]
	By Definition~\ref{def:rectangular-faces} and adopting the notation in Construction~\ref{constr:box-map}, we have
	\begin{align*}
		& \ \widetilde{r} \left( \widehat{F}^0_{K, l}( \widehat{\boxempty}_n(\lambda_0, \ldots, \lambda_n; K)) \right) \\
		& = \widehat{\boxempty}_n(\lambda_0, \ldots, \lambda_n; K) \left( 0 , \widetilde d(\widehat{\boxempty}_n(\lambda_0, \ldots, \lambda_n; K))   \right) \\
		& = \lambda \left(b , d(\lambda) \right) \in \Lambda \;.
	\end{align*}
Similarly, 
\begin{align*}
& \widetilde r  \left( \widehat{F}^1_{K, l}( \widehat{\boxempty}_n(\lambda_0, \ldots, \lambda_n; K)) \right) \\
& = \widehat{\boxempty}_n(\lambda_0, \ldots, \lambda_n; K) \left( d_{K_{(l)}}(\lambda_{l-1}) e_{K_{(l)}} , \widetilde d(\widehat{\boxempty}_n(\lambda_0, \ldots, \lambda_n; K))   \right) \\
&= \lambda \left( b + \widetilde d_l(\widehat \boxempty_n(\lambda_0, \ldots, \lambda_n; K)) , d(\lambda) \right) \in \Lambda.
\end{align*}

	Using the fact that $m = l-1$ and Equation \eqref{eq:J-K}, we now compute that
	\begin{align*}
		b + & \ \widetilde{d}_l (\widehat{\boxempty}_n(\lambda_0, \ldots, \lambda_n; K))  =  \, d_{K_{(l)}}(\lambda_{l-1})  e_{K_{(l)}} + \sum_{i=1}^{n} \sum_{j=K_{(i)}+1 }^{k} d_{j}(\lambda_{i-1}) e_j \\
		& = \sum_{i =1}^{l-1} \sum_{j=K_{(i)}+1}^{k} d_{j}(\lambda_{i-1}) e_j + \sum_{j=K_{(l)}}^k d_j(\lambda_{l-1})  e_j + \sum_{i =l+1}^{n} \sum_{j=K_{(i)}+1}^{k} d_{j}(\lambda_{i-1}) e_j \\
		& = \sum_{i =1}^{m} \sum_{j=J_{(i)}+1}^{k} d_{j}(\lambda_{i-1}) e_j + \sum_{j= (K_{(l)}-1) +1 }^k d_j(\lambda_{m}) e_j + \sum_{i' =m+1}^{n-1} \sum_{j=J_{(i')}+1}^{k} d_{j}(\lambda_{i'}) e_j \; ,
	\end{align*}
	where we have used $i' := i-1$ in the penultimate sum. Since $J = K \backslash K_{(l)}$, we have
\[
		\widetilde{r} \left( \widehat{F}^1_{K, l}( \widehat{\boxempty}_n(\lambda_0, \ldots, \lambda_n; K)) \right) = \widetilde{r} \left( \widehat{\Xi}(\lambda_0,  \ldots, \lambda_n ; K \backslash K_{(l)},  K_{(l)} - 1)  \right) .\]
	Similar computations yield
\[ 		\widetilde{r} \left( \widehat{F}^0_{K, l}( \widehat{\boxempty}_n(\lambda_0, \ldots, \lambda_n; K)) \right) = \widetilde{r} \left( \widehat{\Xi}(\lambda_0,  \ldots, \lambda_n ; K \setminus K_{(l)}, K_{(l)} )  \right) \; \]
	and, for $\varepsilon = 0,1,$	
	\[
		\widetilde{s} \left( \widehat{F}^\varepsilon_{K, l}( \widehat{\boxempty}_n(\lambda_0, \ldots, \lambda_n; K)) \right) = \widetilde{s} \left( \widehat{\Xi}(\lambda_0,  \ldots, \lambda_n ; K \setminus K_{(l)}, K_{(l)} - \varepsilon)  \right) \; .
	\]
	Since $\widetilde \Lambda(v)$ has an initial object, the fact that $\widehat{F}^\varepsilon_{K, l}( \widehat{\boxempty}_n(\lambda_0, \ldots, \lambda_n; K)) $ and $ \widehat{\Xi}(\lambda_0,  \ldots, \lambda_n ; K \setminus K_{(l)}, K_{(l)} - \varepsilon) $ have the same source and range means they must be equal.
\end{proof}

\begin{rmk}
	It follows from Equation \eqref{eq:m-J-q-l} that for any $J \in {k \choose n-1}$, any $q \in \{ 1, \ldots, k \} \backslash J$ and any $\varepsilon \in \{0,1\}$, we have $ J \sqcup \{q\} \in {k \choose n}$ and $q$ is the $(m(J,q) +1)$-th smallest element in $J \sqcup \{q\}$. Hence Equation \eqref{eq:Fbox-K} becomes
	\begin{equation} \label{eq:Fbox-J}
	F^\varepsilon_{J \sqcup q, m(J,q) +1}( \widehat{\boxempty}_n(\lambda_0, \ldots, \lambda_n; J \sqcup \{q\})) = \Xi(\lambda_0,  \ldots, \lambda_n ; J , q - \varepsilon) \; .
	\end{equation}
	Also, for any $J \in {k \choose n-1}$ and any $l \in \{ 1, \ldots, n-1 \}$, direct computation yields
	\begin{align}
	\label{eq:box-0-Xi} \widehat{\boxempty}_{n-1}(\lambda_1, \ldots, \lambda_n; J) = &~ \widehat{\Xi}(\lambda_0,  \ldots, \lambda_n ; J , 0 ) \; ,\\
	\label{eq:box-n-Xi} \widehat{\boxempty}_{n-1}(\lambda_0, \ldots, \lambda_{n-1}\lambda_n; J) = &~ \widehat{\Xi}(\lambda_0,  \ldots, \lambda_n ; J , k ) \; .
	\end{align}
	Diagrammatically, both sides in \eqref{eq:box-0-Xi} are represented by 
	\begin{equation*}
	\begin{array}{ccccc}
	\cline{1-1} 
	\multicolumn{1}{c|}{d_{1}(\lambda_{0})} & d_{1}(\lambda_{1}) & \cdots & {d_{1}(\lambda_{n-1})}  & d_{1}(\lambda_{n}) \\ 
	\multicolumn{1}{c|}{\vdots} & \vdots & \cdots  & \vdots  & \vdots \\
	\cline{2-2} 
	\vdots & \multicolumn{1}{|c|}{d_{J_{(1)}}(\lambda_{1})} & \cdots  & \vdots  & \vdots \\
	\cline{2-2} 
	\vdots & \multicolumn{1}{c|}{\vdots} & \cdots  & \vdots  & \vdots \\
	\vdots & \vdots &  {\cdots} & \vdots  & \vdots \\
	\vdots & \vdots &  \multicolumn{1}{c|}{\cdots} & {\vdots}  & \vdots \\
	\cline{4-4} 
	\vdots & \vdots & \cdots  & \multicolumn{1}{|c|}{d_{J_{(n-1)}}(\lambda_{n-1})}  &   \vdots \\
	\cline{4-4} 
	\vdots & \vdots & \cdots  & \multicolumn{1}{c|}{\vdots}   & \vdots \\
	d_{k}(\lambda_{0}) & d_{k}(\lambda_{1}) & \cdots & d_{k}(\lambda_{n-1})   & \multicolumn{1}{|c}{  d_{k}(\lambda_{n})} 
	\end{array}
	\label{eq:diagram-box}
	\end{equation*}
	and both sides in \eqref{eq:box-n-Xi} are represented by
	\begin{equation*}
	\begin{array}{ccccc}
	\multicolumn{1}{|c}{d_{1}(\lambda_{0})} & \cdots & d_{1}(\lambda_{n-2}) & {d_{1}(\lambda_{n-1})}  & d_{1}(\lambda_{n}) \\
	\multicolumn{1}{|c}{\vdots} & \cdots & \vdots  & \vdots  & \vdots \\
	\cline{1-1} 
	\multicolumn{1}{|c|}{d_{J_{(1)}}(\lambda_{0})} & \cdots & \vdots  & \vdots  & \vdots \\
	\cline{1-1} 
	\multicolumn{1}{c|}{\vdots} & \cdots & \vdots  & \vdots  & \vdots \\
	\vdots &  {\cdots} & \vdots & \vdots  & \vdots \\
	\vdots & \multicolumn{1}{c|}{\cdots} & \vdots &  {\vdots}  & \vdots \\
	\cline{3-3} 
	\vdots & \cdots & \multicolumn{1}{|c|}{d_{J_{(n-1)}}(\lambda_{n-2})}  &  \vdots  &  \vdots \\
	\cline{3-3} 
	\vdots & \cdots & \multicolumn{1}{c|}{\vdots}   & \vdots  & \vdots \\
	d_{k}(\lambda_{0}) & \cdots & d_{k}(\lambda_{n-2}) & \multicolumn{1}{|c}{ d_{k}(\lambda_{n-1}) }  &  d_{k}(\lambda_{n}) \\
	\cline{4-5} 
	\end{array}
	\label{eq:diagram-box}
	\end{equation*}
\end{rmk}

\begin{lemma}
	For any $J \in {k \choose n-1} $ and any $l \in \{ 1, \ldots, n-1\}$, 
	\begin{equation}
	\label{eq:box-l-Xi} \Box_{n-1}(\lambda_0, \ldots, \lambda_{l-1}\lambda_{l}, \ldots, \lambda_n; J) = \sum_{\varepsilon = 0}^1 \Xi(\lambda_0,  \ldots, \lambda_n ; J , J_{(l)} - \varepsilon) .
	\end{equation}
	\label{lem:8-17}
\end{lemma}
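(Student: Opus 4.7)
My plan is to apply Lemma~\ref{lem:rectangular-bdry-concat} to glue the two rectangles $\rectangularchain{\xi_0}_J$ and $\rectangularchain{\xi_1}_J$ on the right-hand side, where $\xi_\varepsilon := \widehat{\Xi}(\lambda_0, \ldots, \lambda_n; J, J_{(l)} - \varepsilon)$ for $\varepsilon \in \{0, 1\}$. Intuitively, from the diagrams in Remarks~\ref{rmk:box-diagram} and~\ref{rmk:box-face-diagram}, these two rectangles share a common $(n-1)$-dimensional face and, when glued along the $J_{(l)}$-direction, produce the single rectangle associated to $\widehat{\boxempty}_{n-1}(\lambda_0, \ldots, \lambda_{l-1}\lambda_l, \ldots, \lambda_n; J)$, whose $l$-th box has side-length $d_{J_{(l)}}(\lambda_{l-1}) + d_{J_{(l)}}(\lambda_l)$ in the $J_{(l)}$-direction.

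The first step is to observe, using Equations~\eqref{eq:m-J-q-l} and \eqref{eq:m-J-q}, that $m(J, J_{(l)} - \varepsilon) = l - \varepsilon$. Plugging this into the formulas of Construction~\ref{constr:box-face-map} gives explicit expressions for $\widetilde{r}(\xi_\varepsilon)$ and $\widetilde{s}(\xi_\varepsilon)$ in $\widetilde{\Lambda}(s(\lambda_n))$, and a coordinate-by-coordinate comparison shows that $\widehat{F}^1_{J, l}(\xi_1) = \widehat{F}^0_{J, l}(\xi_0)$. Lemma~\ref{lem:rectangular-bdry-concat} then yields $\rectangularchain{\xi_1}_J + \rectangularchain{\xi_0}_J = \rectangularchain{\xi_1 \, S_{J, l}(\xi_0)}_J$.

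To finish, I will verify that $\xi_1 \, S_{J, l}(\xi_0) = \widehat{\boxempty}_{n-1}(\lambda_0, \ldots, \lambda_{l-1}\lambda_l, \ldots, \lambda_n; J)$ in $\widetilde{\Lambda}(s(\lambda_n))$. Since this future-path $k$-graph has an initial vertex (Remark~\ref{rmk:future-path-graph}\eqref{rmk:future-path-graph:initial}), any two morphisms with matching sources and ranges coincide, so it suffices to check these. The range of $\xi_1 \, S_{J, l}(\xi_0)$ equals $\widetilde{r}(\xi_1)$ and its source equals $\widetilde{s}(\xi_0)$; both unfold, through Constructions~\ref{constr:box-map} and~\ref{constr:box-face-map}, into expressions of the form $\lambda(c, d(\lambda))$ where $\lambda = \lambda_0 \cdots \lambda_n$ and $c \in \mathbb{N}^k$ is a sum of degree terms, with the two sets of coefficients agreeing after applying $d_j(\lambda_{l-1}\lambda_l) = d_j(\lambda_{l-1}) + d_j(\lambda_l)$.

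The main obstacle is the detailed bookkeeping of coordinate sums: every coordinate $j \in \{1, \ldots, k\}$ must be treated separately, with cases depending on its position relative to $J_{(1)} < \cdots < J_{(n-1)}$ and to $J_{(l)}$ in particular. The diagrammatic picture clarifies the geometry, however: the $l$-th column of the diagram for $\widehat{\boxempty}_{n-1}(\lambda_0, \ldots, \lambda_{l-1}\lambda_l, \ldots, \lambda_n; J)$ represents the composite $\lambda_{l-1}\lambda_l$, and splitting it across the interface between $\lambda_{l-1}$ and $\lambda_l$ produces the $l$-th and $(l+1)$-st columns in the diagrams for $\xi_0$ and $\xi_1$ respectively, which is precisely the content that Lemma~\ref{lem:rectangular-bdry-concat} formalizes.
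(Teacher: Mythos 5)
Your overall strategy is the paper's own: exhibit a common face of the two rectangles, glue them via Lemma~\ref{lem:rectangular-bdry-concat}, and identify the glued rectangle with $\widehat{\boxempty}_{n-1}(\lambda_0,\ldots,\lambda_{l-1}\lambda_l,\ldots,\lambda_n;J)$. However, you have the orientation of the gluing reversed, and the key step fails as stated. With your notation $\xi_\varepsilon = \widehat{\Xi}(\lambda_0,\ldots,\lambda_n; J, J_{(l)}-\varepsilon)$, the correct face identity is
\[
\widehat{F}^1_{J,l}(\xi_0) \;=\; \widehat{F}^0_{J,l}(\xi_1),
\]
not $\widehat{F}^1_{J,l}(\xi_1) = \widehat{F}^0_{J,l}(\xi_0)$. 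Indeed, writing $\widetilde{r}(\xi_\varepsilon) = \lambda(b_\varepsilon, d(\lambda))$ as in Construction~\ref{constr:box-face-map} and using $m(J,J_{(l)})=l$, $m(J,J_{(l)}-1)=l-1$, one computes $(b_0)_{J_{(l)}} = \sum_{i=1}^{l-1} d_{J_{(l)}}(\lambda_{i-1})$ while $(b_1)_{J_{(l)}} = (b_0)_{J_{(l)}} + d_{J_{(l)}}(\lambda_{l-1})$; so it is $\xi_0$ (the parameter $q=J_{(l)}$, whose box in row $J_{(l)}$ is $d_{J_{(l)}}(\lambda_{l-1})$) that occupies the earlier block in the $J_{(l)}$-direction, and $\xi_1$ (whose box is $d_{J_{(l)}}(\lambda_l)$) the later one. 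With your ordering the composite $\xi_1\, S_{J,l}(\xi_0)$ is not even composable in general. A minimal counterexample: take $k=1$, $n=2$, $J=\{1\}$, $l=1$; then $\xi_0 = (\lambda_0, \lambda_1\lambda_2)$ and $\xi_1 = (\lambda_1,\lambda_2)$ in $\widetilde{\Lambda}(s(\lambda_2))$, so $\widehat{F}^1_{J,1}(\xi_1)$ is the vertex $\lambda_2$ whereas $\widehat{F}^0_{J,1}(\xi_0)$ is the vertex $\lambda_0\lambda_1\lambda_2$. The same reversal infects your concluding identification: the box has range $\widetilde{r}(\xi_0)$ and source $\widetilde{s}(\xi_1)$ (in the example it equals $(\lambda_0\lambda_1,\lambda_2)$), not range $\widetilde{r}(\xi_1)$ and source $\widetilde{s}(\xi_0)$ as you assert.

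The error is local and entirely fixable: swap the roles of $\xi_0$ and $\xi_1$ throughout, i.e., verify $\widehat{F}^1_{J,l}(\xi_0)=\widehat{F}^0_{J,l}(\xi_1)$ by exactly the coordinate bookkeeping you outline, glue to obtain $\rectangularchain{\xi_0}_J + \rectangularchain{\xi_1}_J = \rectangularchain{R_{J,l}(\xi_0)\,\xi_1}_J$, and identify $R_{J,l}(\xi_0)\,\xi_1$ with $\widehat{\boxempty}_{n-1}(\lambda_0,\ldots,\lambda_{l-1}\lambda_l,\ldots,\lambda_n;J)$. After this correction your argument coincides with the paper's proof of Lemma~\ref{lem:8-17}; your device of invoking the initial vertex of $\widetilde{\Lambda}(s(\lambda_n))$ to reduce the final identification to a comparison of sources and ranges is legitimate and is the same trick the paper uses in the proof of Lemma~\ref{lem:8-15}.
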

\begin{proof} 
	Write $\vec{\lambda} = (\lambda_0, \ldots, \lambda_n)$.
	We first observe, via a computation similar to those in the proof of Lemma~\ref{lem:rectangular-bdry}, that 
	\begin{align*}\widehat{F}^1_{J,l}\left(\widehat{\Xi}\left(\vec{\lambda}; J, J_{(l)}\right)\right) &=
	\widehat{F}^0_{J,l} \left(\widehat{\Xi}\left(\vec{\lambda}; J, J_{(l)}-1 \right)\right) \; .
	\end{align*}
	Diagrammatically, both are represented by
	\[
	\begin{array}{ccccccccc}
	\multicolumn{1}{|c}{d_{1}(\lambda_{0})} & \cdots & \cdots & d_{1}(\lambda_{l-1}) & d_{1}(\lambda_{l}) & \cdots & \cdots & {d_{1}(\lambda_{n-1})}  & d_{1}(\lambda_{n}) \\
	\multicolumn{1}{|c}{\vdots} & \cdots & \cdots & \vdots & \vdots  & \cdots & \cdots & \vdots  & \vdots \\
	\cline{1-1} 
	\multicolumn{1}{|c|}{d_{J_{(1)}}(\lambda_{0})} & \cdots & \cdots & \vdots & \vdots  & \cdots & \cdots & \vdots  & \vdots \\
	\cline{1-1} 
	\multicolumn{1}{c|}{\vdots} & \cdots & \cdots & \vdots & \vdots  & \cdots & \cdots & \vdots  & \vdots \\
	\vdots & \cdots & \cdots & \vdots & \vdots  & \cdots & \cdots & \vdots  & \vdots \\
	\cline{3-3} 
	\vdots & \cdots & \multicolumn{1}{c|}{\cdots} & \vdots & \vdots  & \cdots & \cdots & \vdots  & \vdots \\
	\cline{3-3} 
	\vdots & \cdots & \multicolumn{1}{c|}{\cdots} & \vdots & \vdots  & \cdots & \cdots & \vdots  & \vdots \\
	\cline{4-4} 
	\vdots & \cdots & \cdots  & \multicolumn{1}{c|}{d_{J_{(l)}}(\lambda_{l-1})} & {d_{J_{(l)}}(\lambda_{l})} & \cdots & \cdots & \vdots  & \vdots \\
	\cline{5-5} 
	\vdots & \cdots & {\cdots} & \vdots & \multicolumn{1}{c|}{\vdots} &  \cdots & \cdots & \vdots  & \vdots \\
	\cline{6-6} 
	\vdots & \cdots & \cdots  & \vdots &  \vdots & \multicolumn{1}{|c}{\cdots} & \cdots & \vdots  & \vdots \\
	\cline{6-6} 
	\vdots & \cdots & {\cdots} & \vdots &  \vdots & \cdots & \cdots & \vdots  & \vdots \\
	\vdots  & \cdots & \cdots  & \vdots & \vdots & \cdots & \cdots  & \multicolumn{1}{|c}{\vdots}  & \vdots \\
	\cline{8-8} 
	\vdots  & \cdots & \cdots  & \vdots & \vdots & \cdots & \cdots  & \multicolumn{1}{|c|}{d_{J_{(n-1)}}(\lambda_{n-1})}  &   \vdots \\
	\cline{8-8} 
	\vdots  & \cdots & \cdots  & \vdots & \vdots & \cdots & \cdots  & \multicolumn{1}{c|}{\vdots}   & \vdots \\
	d_{k}(\lambda_{0}) & \cdots & \cdots & d_{k}(\lambda_{l-1}) & d_{k}(\lambda_{l}) & \cdots & \cdots & d_{k}(\lambda_{n-1})   & \multicolumn{1}{|c}{d_{k}(\lambda_{n})} 
	\end{array}
	\]
	The rule for adding $n$-rectangles, Lemma~\ref{lem:rectangular-bdry-concat}, then implies that 
	\[ 
		\Xi\left(\vec{\lambda}; J, J_{(l)}-1\right) + \Xi\left(\vec{\lambda}; J, J_{(l)}\right)= \rectangularchain{ R_{J,l}\left(\widehat{\Xi}\left(\vec{\lambda}; J, J_{(l)}\right)\right) \, \widehat{\Xi}\left(\vec{\lambda}; J, J_{(l)}-1 \right) }_{J} \; .
	\]
	Diagrammatically, both sides of the above equation are represented by replacing 
	\[
		\begin{array}{cc}
			\cline{1-1} 
			\multicolumn{1}{c|}{d_{J_{(l)}}(\lambda_{l-1})} & {d_{J_{(l)}}(\lambda_{l})}  \\
			\cline{2-2} 
		\end{array}
	\]
	in the above diagram by 
	\[
	\begin{array}{cc}
		\cline{1-2} 
		\multicolumn{1}{|c}{d_{J_{(l)}}(\lambda_{l-1})} & \multicolumn{1}{c|}{d_{J_{(l)}}(\lambda_{l})}  \\
		\cline{1-2} 
	\end{array} \; .
	\]
	Comparing with \eqref{eq:Fbox-K}, we see that 
	\[ R_{J,l}\left(\widehat{\Xi}\left(\vec{\lambda}; J, J_{(l)}\right)\right) \, \widehat{\Xi}\left(\vec{\lambda}; J, J_{(l)}-1 \right)  = \widehat{\boxempty}_{n-1}(\lambda_0, \ldots, \lambda_{l-1} \lambda_l, \ldots, \lambda_n; J) \; . \]
	This, combined with the last equation, proves the claim. 
\end{proof}

\begin{rmk}
	Equation \eqref{eq:m-J-q-l} implies that  for any $J \in {k \choose n-1}$, any $l \in \{ 1, \ldots, n-1 \}$ and any 
	 $q \in J$, we have $q = J_{(l)}$ if and only if $l = m(J,q)$. Hence whenever $q \in J$, Equation \eqref{eq:box-l-Xi} above can be equivalently written as
	\begin{equation} \label{eq:box-q-Xi} 
		\Box_{n-1}(\lambda_0, \ldots, \lambda_{m(J,q)-1}\lambda_{m(J,q)}, \ldots, \lambda_n; J) = \sum_{\varepsilon = 0}^1 \Xi(\lambda_0,  \ldots, \lambda_n ; J , q - \varepsilon) \; .
	\end{equation}
\end{rmk}

We now begin the proof of Theorem~\ref{thm:box-is-chain-map}, that is, showing $\partial_n \circ \boxempty_n  = \boxempty_{n-1} \circ \partial_n^{\Presolution}$. 
\begin{proof}[Proof of Theorem~\ref{thm:box-is-chain-map}]
		The statement is obvious for $n < 0$. The case for $n = 0$ follows from the observation that $\boxempty_0(\lambda) = (r(\lambda), \lambda)$ for any $\lambda \in \Lambda^{*(0+1)}$. It therefore suffices to show that whenever $n >0$, 
	\[
		\left(\partial_n \circ \boxempty_n\right)_v (\lambda_0,  \ldots, \lambda_n)  = \left(\boxempty_{n-1} \circ \partial_n^{\Presolution}\right)_v (\lambda_0,  \ldots, \lambda_n)  \; .
	\]
	For any $J \in {k \choose n-1}$, we have the following equations. For the sake of brevity, we write $\vec{\lambda}$ in place of $(\lambda_0,  \ldots, \lambda_n)$, drop the subscript $v$, and provide justification for most steps via the equation references above the equation signs. 
	\begin{align*}
 \Box_{n-1}(\lambda_1, \ldots, \lambda_n; J)  &+ (-1)^n \Box_{n-1}(\lambda_0, \ldots, \lambda_{n-1}\lambda_n; J)  \\
	\overset{\eqref{eq:box-0-Xi}, \eqref{eq:box-n-Xi}}{\hfill =} &~ \Xi(\vec{\lambda} ; J , 0 )  + (-1)^n \,\Xi(\vec{\lambda} ; J , k ) \\
	\overset{\eqref{eq:m-J-0}, \eqref{eq:m-J-k}}{\hfill =}  &~ (-1)^{m(J,0)} \,\Xi(\vec{\lambda} ; J , 0 )  - (-1)^{m(J,k)} \,\Xi(\vec{\lambda} ; J , k ) \\
	\overset{}{\hfill =} &~ \sum_{q=0}^{k-1} (-1)^{m(J,q)} \,\Xi(\vec{\lambda} ; J , q) - \sum_{q=1}^{k} (-1)^{m(J,q)} \,\Xi(\vec{\lambda} ; J , q) \\
	\overset{}{\hfill =} &~ \sum_{q=1}^{k} \left( (-1)^{m(J,q-1)} \,\Xi(\vec{\lambda} ; J , q-1) - (-1)^{m(J,q)} \,\Xi(\vec{\lambda} ; J , q) \right) \\
	\overset{\eqref{eq:m-J-q}}{\hfill =} &~  \sum_{q \in \{1, \ldots, k\} \backslash J } \left( (-1)^{m(J,q)} \,\Xi(\vec{\lambda} ; J , q-1) - (-1)^{m(J,q)} \,\Xi(\vec{\lambda} ; J , q) \right)  \\
	& + \sum_{q \in J } \left( (-1)^{m(J,q) -1} \,\Xi(\vec{\lambda} ; J , q-1) - (-1)^{m(J,q)} \,\Xi(\vec{\lambda} ; J , q) \right) \\
	\overset{}{\hfill =} &~ \sum_{q \in \{1, \ldots, k\} \backslash J } (-1)^{m(J,q) + 1} \left( \Xi(\vec{\lambda} ; J , q) - \Xi(\vec{\lambda} ; J , q - 1) \right)	\\
	& - \sum_{q \in J } (-1)^{m(J,q)} \left( \Xi(\vec{\lambda} ; J , q) + \Xi(\vec{\lambda} ; J , q-1) \right) .
	\\
	\overset{\eqref{eq:Fbox-J}, \eqref{eq:box-q-Xi}}{\hfill =} &~  \sum_{q \in \{1, \ldots, k\} \backslash J } (-1)^{m(J,q) + 1} \sum_{\varepsilon = 0}^{1} (-1)^{\varepsilon} {F^\varepsilon_{m(J,q)+1}( \widehat{\boxempty}_n(\vec{\lambda}; J \sqcup \{q\}))}  \\ 
	& - \sum_{q \in J } (-1)^{m(J,q)} \Box_{n-1}(\lambda_0, \ldots, \lambda_{m(J,q)-1}\lambda_{m(J,q)}, \ldots, \lambda_n; J) 
	\end{align*}
	Rearranging the final entries, we obtain for any $J \in {k \choose n-1}$, 
	\begin{align*}
	\boxempty_{n-1} ( \partial_n^{\Presolution}( \vec{\lambda} ); J) 
	= &~ \Box_{n-1}(\lambda_1, \ldots, \lambda_n; J) + (-1)^n \Box_{n-1}(\lambda_0, \ldots, \lambda_{n-1}\lambda_n; J)  \\
	& + \sum_{l = 1 }^{n-1} (-1)^{l} \, \Box_{n-1}(\lambda_0, \ldots, \lambda_{l-1}\lambda_{l}, \ldots, \lambda_n; J) \\
	= &~ \sum_{q \in \{1, \ldots, k\} \backslash J } \sum_{\varepsilon = 0}^{1} (-1)^{m(J,q) + 1 + \varepsilon} {F^\varepsilon_{m(J,q)+1}( \widehat{\boxempty}_n(\vec{\lambda}; J \sqcup \{q\}))}   
	\end{align*}
	Taking the sum over all $J \in {k \choose n-1}$ and noting that for each pair $(J,q)$ where $q \in \{1, \ldots, k\} \backslash J$, there is a unique pair $(K,l) \in {k \choose n} \times \{1, \ldots, n \}$ such that $K = J \sqcup \{q\}$, $q = K_{(l)} $, and $  J_{(l-1)} < q < J_{(l)} $, we obtain
	\begin{align*}
	\boxempty_{n-1} ( \partial_n^{\Presolution} (\vec{\lambda} )) 
	= &~ \sum_{J \in {k \choose n-1}} \boxempty_{n-1} ( \partial_n^{\Presolution}( \vec{\lambda}) ; J) \\
	= &~ \sum_{J \in {k \choose n-1}} \sum_{q \in \{1, \ldots, k\} \backslash J } \sum_{\varepsilon = 0}^{1} (-1)^{m(J,q) + 1 + \varepsilon} {F^\varepsilon_{m(J,q)+1}( \widehat{\boxempty}_n(\vec{\lambda}; J \sqcup \{q\}))}   \\
	\overset{\eqref{eq:m-J-q-l}}{\hfill =}  &~ \sum_{K \in {k \choose n}} \sum_{l = 1 }^n \sum_{\varepsilon = 0}^{1} (-1)^{l + \varepsilon} {F^\varepsilon_{l}( \widehat{\boxempty}_n(\vec{\lambda}; K ))} \\
	= &~ \sum_{K \in {k \choose n}} \partial_n ( \Box_n(\vec{\lambda}; K )) \\
	= &~ \partial_n ( \Box_n(\vec{\lambda})) \; ,
	\end{align*}
	which is what we wanted. 
\end{proof}

\subsection{A proof of isomorphism by chain maps}

We are ready to complete our second proof of the isomorphism between the cubical and categorical \mbox{(co-)}homology groups of a $k$-graph $\Lambda$. 

\begin{prop}\label{prop:homotopy-equivalence}
	For any $k$-graph $\Lambda$, the $\Lambda$-chain maps $\triangledown_*$ and $\boxempty_*$ implement a homotopy equivalence between the $\Lambda$-chain complexes $\Presolution_*(\Lambda) $ and $ \Z {\widetilde{Q}}_*( \widetilde{\Lambda})$. 
\end{prop}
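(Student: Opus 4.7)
The plan is to prove the homotopy equivalence by establishing two statements: (1) the composition $\boxempty_n \circ \triangledown_n$ equals $\operatorname{id}_{\Z \widetilde{Q}_n(\widetilde{\Lambda})}$ on the nose for every $n$; and (2) the other composition $\triangledown_* \circ \boxempty_*$ is $\Lambda$-chain homotopic to $\operatorname{id}_{\Presolution_*(\Lambda)}$.

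For (1), I would fix a generator $(\eta, \mu)$ of $\Z Q_n(\widetilde{\Lambda})(v)$ with $d(\eta) = \sum_{i=1}^n e_{c_i}$ and $c_1 < \cdots < c_n$, so that $C_{(j)}(\eta) = c_j$. Expanding via Construction~\ref{constr:triangle-map} gives $\triangledown_n(\eta, \mu) = \sum_{\sigma \in \Sigma_n} \operatorname{sgn}(\sigma)(\eta_1^\sigma, \ldots, \eta_n^\sigma, \mu)$ with $C(\eta_i^\sigma) = \{c_{\sigma(i)}\}$. Applying $\boxempty_n$ yields a double sum over pairs $(\sigma, K)$ with $K \in \binom{k}{n}$, and the degree formula~\eqref{eq:box-map-degree} tells me that $\widetilde{d}(\widehat{\boxempty}_n(\eta_1^\sigma,\ldots,\eta_n^\sigma, \mu; K)) = \sum_{i=1}^n d_{K_{(i)}}(\eta_i^\sigma) e_{K_{(i)}}$. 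Since $\eta_i^\sigma$ is monochromatic of color $c_{\sigma(i)}$, the rectangular chain $\rectangularchain{\widehat{\boxempty}_n(\ldots;K)}_K$ is non-degenerate only when $K_{(i)} = c_{\sigma(i)}$ for every $i$; the strict-monotonicity squeeze between the sequences $K_{(1)} < \cdots < K_{(n)}$ and $c_1 < \cdots < c_n$ then forces $\sigma = \operatorname{id}$ and $K = \{c_1, \ldots, c_n\}$, collapsing the double sum to a single term. A direct unwinding of Construction~\ref{constr:box-map} identifies this surviving term as the morphism $(\eta, \mu) \in \widetilde{\Lambda}(v)$ of degree $d(\eta) = e_K$, and since $d(\eta) - e_K = 0$, the corresponding rectangular chain reduces to the one cube $(\eta, \mu)$.

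For (2), I would invoke the standard homological-algebra principle that two $\Lambda$-chain maps between projective resolutions of a given $\Lambda$-module that lift the same morphism on the augmentation are chain homotopic. Here $\triangledown_* \boxempty_*$ and $\operatorname{id}_{\Presolution_*(\Lambda)}$ are both chain endomorphisms of $\Presolution_*(\Lambda)$ reducing to $\operatorname{id}_{\Z^\Lambda}$ in degree $-1$, and Remark~\ref{rmk:categorical-resolution} guarantees that $\Presolution_*(\Lambda)$ is a free—hence projective—resolution of $\Z^\Lambda$. The homotopy $H_n \colon \Presolution_n(\Lambda) \to \Presolution_{n+1}(\Lambda)$ is constructed by induction on $n \geq 0$, starting from $H_{-1} = 0$: at each stage, the $\Lambda$-module map $\triangledown_n \boxempty_n - \operatorname{id}_{\Presolution_n} - H_{n-1} \circ \partial_n^{\Presolution}$ takes values in $\ker \partial_n^{\Presolution} = \operatorname{im} \partial_{n+1}^{\Presolution}$ by exactness of $\Presolution_*$ combined with the inductive hypothesis, and projectivity of $\Presolution_n(\Lambda)$ lets me lift it along the surjection $\partial_{n+1}^{\Presolution} \colon \Presolution_{n+1}(\Lambda) \twoheadrightarrow \ker \partial_n^{\Presolution}$ to produce $H_n$ satisfying the homotopy identity.

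Combining (1) and (2) exhibits $\boxempty_*$ and $\triangledown_*$ as mutual chain homotopy inverses, proving the proposition. The nontrivial work has already been done in Proposition~\ref{prop:triangle-is-chain-map} and Theorem~\ref{thm:box-is-chain-map}; what remains is a combinatorial collapse and a routine projective lifting. I expect the hardest part to be the bookkeeping in (1), though the strict-monotonicity argument should make the collapse go through cleanly.
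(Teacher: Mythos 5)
Your proposal is correct and takes essentially the same approach as the paper's proof: your part (1) is exactly the paper's degeneracy argument that $\boxempty_n \circ \triangledown_n = \operatorname{id}_{\Z \widetilde{Q}_*(\widetilde{\Lambda})}$ (with the monotonicity squeeze forcing $\sigma = \operatorname{id}$ and $K = C(\eta)$ made slightly more explicit), and your part (2) is the comparison theorem for projective resolutions, which the paper simply cites as \cite[Theorem~III.6.1]{maclane} while you unwind the standard inductive lifting that proves it.
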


\begin{proof}
	We observe from Construction~\ref{constr:box-map} that for any $v \in \operatorname{Obj}\Lambda$, any $\left( \lambda_0, \ldots, \lambda_n \right) \in \Lambda^{*(n+1)} v$, and any $K \in {k \choose n}$, if $d_{K_{(i)}}(\lambda_{i-1})=0$ for some $i$, then the color type $C \left(\widehat{\boxempty}_n(\lambda_0, \ldots, \lambda_n; K)\right)$ is properly contained in $K$, and thus the resulting rectangular chain $\left({\boxempty}_n\right)_v (\lambda_0, \ldots, \lambda_n; K)$ is trivial. In particular, using the notation of Construction~\ref{constr:triangle-map}, we see that for any $v \in \operatorname{Obj}\Lambda$ and any $(\lambda, \mu) \in Q_n(\widetilde{\Lambda}(v))$, we have $\left({\boxempty}_n\right)_v (\lambda_1^\sigma, \ldots, \lambda_n^\sigma, \mu; K) =0$ for any \emph{nontrivial} permutation $\sigma \in \Sigma_{n}$ and for any $K \in {k \choose n}$ except for $K = C(\lambda)$. This implies 
	\[
		\left(\Box_n \circ \triangledown_n\right)_v (\lambda, \mu) = \left(\Box_n\right)_v \big(\lambda_1^e, \ldots, \lambda_n^e, \mu; C(\lambda) \big) = (\lambda, \mu) \;,
	\]
	where $e$ denotes the identity in $\Sigma_n$. Hence we have
	\begin{equation}
		\Box_n \circ \triangledown_n = \operatorname{id}_{\Z {\widetilde{Q}}_*( \widetilde{\Lambda})} \;.
		\label{eq:box-circ-triangle-id}
	\end{equation}
	
	On the other hand, since Remark~\ref{rmk:categorical-resolution} implies that $\chaincomplex{\mathcal P_n, \partial_n^{\Presolution} }_n$ is a projective resolution of the trivial $\Lambda$-module $\Z^\Lambda$, \cite[Theorem~III.6.1]{maclane} implies that $\triangledown_n \circ \Box_n$ is chain homotopic to the identity chain map on $\Presolution_*(\Lambda) $. This completes the proof. 
\end{proof}

\begin{rmk}
	The proof of Proposition~\ref{prop:homotopy-equivalence} does not use any result from Section~\ref{sec:initial-vertices} and in particular does not assume the knowledge that $\Z {\widetilde{Q}}_*( \widetilde{\Lambda})$ is exact. It thus gives an alternative proof that Equation~\eqref{eq:cubical-free-res} is a projective resolution. Conversely, if we assume knowledge of Theorem~\ref{thm:proj-res}, then Proposition~\ref{prop:homotopy-equivalence} follows immediately, as all chain maps between projective resolutions are homotopy equivalences (see \cite[Theorem III.6.1]{maclane}). To summarize, the contents of this section up to this point can be seen as parallel to Section~\ref{sec:initial-vertices} and Theorem~\ref{thm:proj-res}. 
\end{rmk}

The advantage of this alternative proof is that it provides explicit chain maps that induce isomorphisms between cubical and categorical \mbox{(co-)}homology groups. More precisely, the pair of $\Lambda$-chain maps $\triangledown_*$ and $\boxempty_*$ induce chain maps between categorical and cubical \mbox{(co-)}chain complexes (cf.~Constructions~\ref{constr:cubical-homology-pre} and ~\ref{constr:cubical-cohomology-pre}, and Remark~\ref{rmk:categorical-cohlogy-reformulations}). 

\begin{thm}\label{thm:chain-maps}
	For any $k$-graph $\Lambda$ and any right $\Lambda$-module $\mathcal M$, there is a pair of cochain maps
	\[
		\triangledown^*_{\mathcal{M}} \colon C^*_{} ( \Lambda, \mathcal{M}) \to C^*_{\operatorname{cub}} ( \Lambda, \mathcal{M}) \quad \text{ and } \quad \boxempty^*_{\mathcal{M}} \colon C^*_{\operatorname{cub}} ( \Lambda, \mathcal{M}) \to C^*_{} ( \Lambda, \mathcal{M})
	\]
	defined by 
	\[
		\triangledown^n_{\mathcal{M}} (f) (\eta) =  \sum_{\sigma \in \Sigma_{n}} \operatorname{sgn}(\sigma) \; f\left( \eta_1^\sigma, \ldots, \eta_n^\sigma \right) 
	\]
	for any $n \in \Z$, any $f \in C^n_{} ( \Lambda, \mathcal{M})$ and any $\eta \in Q_n(\Lambda)$, and 
	\[
		\boxempty^n_{\mathcal{M}} (g) \left( \lambda_0, \ldots, \lambda_{n-1} \right) = \sum_{ { K \in {k \choose n} } } \sum_{m \in [b + e_K, c] \cap \N^k} g \Big( \lambda (m - e_K , m) \Big) \cdot \lambda \big( m, d \left( \lambda \right) \big) 
	\] 
	for any $n \in \Z$, any $g \in C^n_{\operatorname{cub}} ( \Lambda, \mathcal{M})$ and any $\left( \lambda_0, \ldots, \lambda_{n-1} \right) \in \Lambda^{*n}$, where $e_K = \sum_{j \in K}{e_j} \in \N^k$, $\lambda = \lambda_0 \cdots \lambda_{n-1}$, 
	\[
		b =  \sum_{i=1}^{n} \sum_{j=K_{(i)}+1 }^{k} d_{j}(\lambda_{i-1}) e_j \quad\text{ and }\quad c = \sum_{i=1}^{n} \sum_{j= K_{(i)}}^{k}  d_{j}(\lambda_{i-1}) e_j .
	\]
	Moreover, they constitute a homotopy equivalence and thus also induce an isomorphism between $H^n(\Lambda, \mathcal M)$ and $H^n_{\operatorname{cub}}(\Lambda, \mathcal M)$. 
	
	Similarly, for any left $\Lambda$-module $\mathcal N$, there is a pair of chain maps
	\[
		\triangledown_*^{\mathcal{N}} \colon C_*^{\operatorname{cub}} ( \Lambda, \mathcal{N}) \to C_*^{} ( \Lambda, \mathcal{N})  \quad \text{ and } \quad 	\boxempty_*^{\mathcal{N}} \colon C_n^{} ( \Lambda, \mathcal{N}) \to C_n^{\operatorname{cub}} ( \Lambda, \mathcal{N}) 
	\]
	defined by 
	\[
		\triangledown_n^{\mathcal{N}} \left( \eta , a \right) = \sum_{\sigma \in \Sigma_{n}} \operatorname{sgn}(\sigma)  \left( (\eta_1^\sigma, \ldots, \eta_n^\sigma)  , a\right)
	\]
	for any $n \in \Z$, any $\eta \in Q_n(\Lambda)$ and any $a \in \mathcal{N}(s(\eta))$, and 
	\[
		\boxempty_n^{\mathcal{N}} \big( \left( \lambda_0, \ldots, \lambda_{n-1} \right) , a \big) = \sum_{ { K \in {k \choose n} } } \sum_{m \in [b + e_K, c] \cap \N^k} \Big( \lambda (m-e_K, m) ,   \lambda \big( m, d \left( \lambda \right) \big)  \cdot a  \Big)
	\]
	for any $n \in \Z$, any $\left( \lambda_0, \ldots, \lambda_{n-1} \right) \in \Lambda^{*n}$ and any $a \in \mathcal{N}(s(\lambda_{n-1}))$, using the same notations as above. Moreover, they constitute a homotopy equivalence, and thus also induce an isomorphism between $H_n(\Lambda, \mathcal N)$ and $H_n^{\operatorname{cub}}(\Lambda, \mathcal N)$. 
\end{thm}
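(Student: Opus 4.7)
The plan is to obtain both pairs of \mbox{(co-)}chain maps by applying the additive functors $\Hom_\Lambda(-, \mathcal{M})$ and $- \otimes_\Lambda \mathcal{N}$ to the $\Lambda$-chain maps $\triangledown_*$ and $\boxempty_*$ from Constructions~\ref{constr:triangle-map} and~\ref{constr:box-map}, and then to transport the resulting \mbox{(co-)}chain maps across the identifications given by Proposition~\ref{prop:cubical-chain-iso} on the cubical side and by Remark~\ref{rmk:categorical-cohlogy-reformulations} on the categorical side. Since both functors are additive, they automatically send chain maps to \mbox{(co-)}chain maps and chain homotopies to \mbox{(co-)}chain homotopies. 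Thus, once the explicit formulas for the induced maps are verified, Proposition~\ref{prop:homotopy-equivalence} transports verbatim to yield the stated homotopy equivalences, and the resulting isomorphisms on \mbox{(co-)}homology follow formally.

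Checking the formula for $\triangledown^n_{\mathcal{M}}$ should be routine. Under the identification in Remark~\ref{rmk:categorical-cohlogy-reformulations}, an $f \in C^n(\Lambda, \mathcal{M})$ corresponds to the $\Lambda$-module map $\Presolution_n \to \mathcal{M}$ determined by $(\lambda_0, \ldots, \lambda_{n-1}, s(\lambda_{n-1})) \mapsto f(\lambda_0, \ldots, \lambda_{n-1})$. Pre-composing with $\triangledown_n$ and evaluating at the cubical base element $(\eta, s(\eta))$ directly produces the signed sum over $\Sigma_n$ appearing in the statement, since $\triangledown_n(\eta, s(\eta)) = \sum_{\sigma \in \Sigma_n} \operatorname{sgn}(\sigma) (\eta_1^\sigma, \ldots, \eta_n^\sigma, s(\eta))$, and the final vertex only carries the (trivial) module action.

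The main obstacle will be unwinding $\boxempty_n$ on the categorical base element $(\lambda_0, \ldots, \lambda_{n-1}, s(\lambda_{n-1}))$, in which the trailing vertex plays the role of $\lambda_n$; in particular $\lambda := \lambda_0 \cdots \lambda_{n-1}$, and the $n$th column of the diagram in Remark~\ref{rmk:box-diagram} becomes zero. For each $K \in {k \choose n}$, Construction~\ref{constr:box-map} gives $\widehat{\boxempty}_n(\cdots; K) = (\lambda(b,c), \lambda(c, d(\lambda)))$ with the same $b$ and $c$ as in the statement. Expanding the associated rectangular chain via Definition~\ref{def:rectangular-chain} as a sum of unit cubes parametrized by $m \in [0, d(\lambda(b,c)) - e_K] \cap \N^k$, and reindexing by $m' = b + m + e_K$ so that $m'$ ranges over $[b + e_K, c] \cap \N^k$, produces summands of the form $(\lambda(m' - e_K, m'), \lambda(m', d(\lambda)))$. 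Applying the $\Lambda$-linear extension $g(\eta', \nu) = g(\eta') \cdot \nu$ to each term then yields the claimed formula for $\boxempty^n_{\mathcal{M}}(g)$. The chain-side formulas for $\triangledown^{\mathcal{N}}_*$ and $\boxempty^{\mathcal{N}}_*$ follow by an entirely symmetric computation using the elementary-tensor identifications of Proposition~\ref{prop:cubical-chain-iso} and Remark~\ref{rmk:categorical-cohlogy-reformulations}.
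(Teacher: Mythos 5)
Your proposal is correct and takes essentially the same route as the paper, whose entire proof is the one-line observation that the theorem follows from Propositions~\ref{prop:homotopy-equivalence} and~\ref{prop:cubical-chain-iso} ``after untangling the definitions'': applying the additive functors $\Hom_\Lambda(-,\mathcal{M})$ and $-\otimes_\Lambda\mathcal{N}$ to the $\Lambda$-chain maps $\triangledown_*$ and $\boxempty_*$ and transporting across the free-base identifications is exactly that untangling. Your explicit verification of the formula for $\boxempty^n_{\mathcal{M}}$ (evaluating on the base element with trailing vertex $\lambda_n = s(\lambda_{n-1})$ and reindexing the unit cubes by $m' = b + m + e_K$) accurately supplies the details the paper leaves to the reader.
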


\begin{proof}
	This follows directly from Propositions~\ref{prop:homotopy-equivalence} and~\ref{prop:cubical-chain-iso}, after untangling the definitions. 
\end{proof}

\subsection{Naturality}

We establish naturality of our chain maps. In the following, we use $\circ$ to denote a composition of two natural transformations, and juxtaposition to denote a composition of two functors or between a natural transformation and a functor. 

\begin{lemma}
	\label{lem:naturality-equivariant}
	For any $k$-graph morphism $\varphi \colon  \Lambda \to \Lambda'$, if we let $\widetilde{\varphi}$ be the natural transformation as in Remark~\ref{rmk:future-path-graph}\eqref{rmk:future-path-graph:natural-morphism}, let $\Z \widetilde{Q}_*(\widetilde{\varphi})$ denote the induced $\Lambda$-module chain map from $\Z \widetilde{Q}_*(\widetilde{\Lambda})$ to $\Z \widetilde{Q}_*(\widetilde{\Lambda'})  \varphi$ and let $\Presolution_*({\varphi})$ denote the induced $\Lambda$-module chain map from $\Presolution_*(\Lambda)$ to $\Presolution_*(\Lambda')  \varphi$, then we have 
	\[
		\Presolution_*({\varphi}) \circ \triangledown_{*,\Lambda} = \left(\triangledown_{*,\Lambda'} \, \varphi \right) \circ \Z \widetilde{Q}_*(\widetilde{\varphi}) \quad \text{and} \quad \Z \widetilde{Q}_*(\widetilde{\varphi}) \circ \boxempty_{*,\Lambda} = \left(\boxempty_{*,\Lambda'} \, \varphi \right) \circ \Presolution_*({\varphi}) \; .
	\]
\end{lemma}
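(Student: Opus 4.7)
The plan is to verify both equations directly on generators, using the fact that any $k$-graph morphism $\varphi$ preserves degrees and the factorization property, and therefore commutes with both the color-type decomposition appearing in $\triangledown_*$ and the sub-morphism extraction appearing in $\boxempty_*$.

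First I would dispatch the easy cases. For $n < 0$ both sides vanish or reduce to the identity on $\Z^\Lambda$ (pulled back along $\varphi$); for $n = 0$ the identities follow immediately from $(\triangledown_0)_v(\mu) = \mu$ and $(\boxempty_0)_v(\lambda) = (r(\lambda), \lambda)$, which are visibly compatible with $\varphi$.

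For the $\triangledown$ identity with $n > 0$, fix $(\lambda, \mu) \in Q_n(\widetilde{\Lambda}(v))$. For each $\sigma \in \Sigma_{n}$, the ordered factorization $\lambda = \lambda_1^\sigma \cdots \lambda_n^\sigma$ of Construction~\ref{constr:triangle-map} is characterized by $d(\lambda_i^\sigma) = d_{C_{(\sigma(i))}}(\lambda)\, e_{C_{(\sigma(i))}}$, i.e.\ by degree data alone. Since $\varphi$ preserves degrees, $C(\varphi(\lambda)) = C(\lambda)$ and $d_j(\varphi(\lambda)) = d_j(\lambda)$ for every $j$; applying $\varphi$ to the factorization and invoking uniqueness in the factorization property in $\Lambda'$ then yields $\varphi(\lambda_i^\sigma) = \varphi(\lambda)_i^\sigma$. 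Applying $\varphi$ term-by-term to Equation~\eqref{eq:triangle-formula} gives the desired equality.

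For the $\boxempty$ identity the strategy is analogous. Given $(\lambda_0, \ldots, \lambda_n) \in \Lambda^{*(n+1)}v$ and $K \in \binom{k}{n}$, the tuple $(b, c)$ from Construction~\ref{constr:box-map} depends only on the numbers $d_j(\lambda_{i-1})$, all preserved by $\varphi$, and sub-morphism extraction satisfies $\varphi(\lambda(n,m)) = \varphi(\lambda)(n,m)$ because $\varphi$ is a degree-preserving functor. Consequently $\widetilde{\varphi}$ sends $\widehat{\boxempty}_n(\lambda_0, \ldots, \lambda_n; K)$ to $\widehat{\boxempty}_n(\varphi(\lambda_0), \ldots, \varphi(\lambda_n); K)$, and the same degree-preservation argument shows that the rectangular-chain operator $\rectangularchain{\cdot}_K$ intertwines $\varphi$. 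Summing over $K$ completes the verification. The proof is thus essentially a bookkeeping exercise; the only potential obstacle is keeping track of the several induced maps ($\widetilde{\varphi}$, $\Z\widetilde{Q}_*(\widetilde{\varphi})$, and $\Presolution_*(\varphi)$), but once everything is rephrased in terms of the underlying combinatorial data $(d_j, C, e_j, K, \sigma)$, each of these maps commutes with $\varphi$ by design.
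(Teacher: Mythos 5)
Your proposal is correct and follows essentially the same route as the paper: both proofs check the two identities on generators, obtaining $\varphi(\lambda)_i^\sigma = \varphi(\lambda_i^\sigma)$ from degree preservation together with uniqueness in the factorization property for the $\triangledown$ identity, and the compatibility $\varphi\left( \widehat{\boxempty}_n(\lambda_0,\ldots,\lambda_n;K) \right) = \widehat{\boxempty}_n(\varphi(\lambda_0),\ldots,\varphi(\lambda_n);K)$ (resting on $\varphi(\lambda(m,m')) = \varphi(\lambda)(m,m')$) for the $\boxempty$ identity. Your extra bookkeeping \textemdash\ the cases $n \leq 0$ and the explicit check that forming $K$-shaped rectangular chains $\left\lceil \, \cdot \, \right\rfloor_K$ intertwines $\varphi$ \textemdash\ only spells out details the paper leaves implicit.
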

\begin{proof}
	For any $v \in \operatorname{Obj}\Lambda$, any $n \in \N$ and any $(\lambda, \mu) \in Q_n(\widetilde{\Lambda}(v))$, we have $\widetilde d(\widetilde \varphi_v (\lambda, \mu )) = d(\varphi(\lambda)) = d(\lambda) = \widetilde d(\lambda, \mu)$ since $\varphi$ is degree-preserving. Similarly, for any $\sigma \in \Sigma_{n}$, the multiplicativity of $\varphi$ implies that
	\[\varphi(\lambda) = \varphi(\lambda_1^\sigma) \cdots \varphi(\lambda_n^\sigma).\]  Since 
	$d(\lambda_i^\sigma) = d(\varphi(\lambda_i^\sigma)) = e_{\sigma(i)}$, the factorization property tells us that
	\[\varphi(\lambda)_i^\sigma = \varphi(\lambda_i^\sigma ) \]
	for any $i \in \{1, \ldots, n\}$ and any $\sigma \in \Sigma_{n}$.  This implies
	\[
		\left(\left(\triangledown_{n,\Lambda'} \, \varphi \right) \circ \Z \widetilde{Q}_n(\widetilde{\varphi})\right)_v (\lambda, \mu) = \left( \Presolution_n({\varphi}) \circ \triangledown_{n,\Lambda} \right)_v (\lambda, \mu)  \; .
	\]
	Similarly, the second equality follows from the observation that 
	\[
		\widehat{\boxempty}_n \left(\varphi(\lambda_0), \ldots, \varphi(\lambda_n); K \right) = \varphi \left( \widehat{\boxempty}_n \left(\lambda_0, \ldots, \lambda_n; K \right)\right) 
	\]
	for any $\left( \lambda_0, \ldots, \lambda_{n} \right) \in \Lambda^{*(n+1)}$ and $K \in {k \choose n}$. 
\end{proof}

This lemma leads to the naturality of the \mbox{(co-)}chain maps in Theorem~\ref{thm:chain-maps}. First we specify what type of naturality is considered. 

Let $k \text{\textendash} \mathfrak{graph} \text{\textendash} \mathfrak{LMod}$ be the category such that its objects are pairs $(\Lambda, \mathcal{N})$, where $\Lambda$ is a $k$-graph and $\mathcal{N}$ is a left $\Lambda$-module, and a morphism from $(\Lambda, \mathcal{N})$ to $(\Lambda', \mathcal{N}')$ is a pair $(\varphi, \psi)$, where $\varphi \colon \Lambda \to \Lambda'$ is a $k$-graph morphism and $\psi \colon \mathcal{N} \to \mathcal{N}' \varphi$ is a $\Lambda$-module map. Then $C_*^{} ( -, -)$ and $C_*^{\operatorname{cub}} (-,-)$ form functors from $k \text{\textendash} \mathfrak{graph} \text{\textendash} \mathfrak{LMod}$ to $\mathfrak{chain}$ upon defining 
\begin{align*}
	C_n^{} ( \varphi, \psi) \colon \quad C_n^{} ( \Lambda, \mathcal{N})  & \to  C_n^{} ( \Lambda', \mathcal{N}') \\
	((\lambda_0, \ldots, \lambda_{n-1}) , a)  & \mapsto \left(  \left( \varphi(\lambda_0), \ldots, \varphi(\lambda_{n-1}) \right) , \psi(a) \right)
\end{align*}
and 
\begin{align*}
	C_n^{\operatorname{cub}} ( \varphi, \psi) \colon \quad C_n^{\operatorname{cub}} ( \Lambda, \mathcal{N})  & \to  C_n^{\operatorname{cub}} ( \Lambda', \mathcal{N}') \\
	(\lambda , a) & \mapsto (\varphi(\lambda)  , \psi(a)) \; .
\end{align*}

Similarly, let $k \text{\textendash} \mathfrak{graph}^{\operatorname{op}} \text{\textendash} \mathfrak{RMod}$ be the category such that its objects are pairs $(\Lambda, \mathcal{M})$, where $\Lambda$ is a $k$-graph and $\mathcal{M}$ is a right $\Lambda$-module, and a morphism from $(\Lambda, \mathcal{M})$ to $(\Lambda', \mathcal{M}')$ is a pair $(\varphi, \psi)$, where $\varphi \colon \Lambda' \to \Lambda$ is a $k$-graph morphism and $\psi \colon \mathcal{M} \varphi \to \mathcal{M}'$ is a $\Lambda'$-module map. Then $C^*_{} ( -, -)$ and $C^*_{\operatorname{cub}} (-,-)$ form functors from $k \text{\textendash} \mathfrak{graph}^{\operatorname{op}} \text{\textendash} \mathfrak{RMod}$ to $\mathfrak{cochain}$ upon defining 
\begin{align*}
	C^n_{} ( \varphi, \psi) (f) (\lambda_0, \ldots, \lambda_{n-1}) &= \psi \left( f \left( \varphi(\lambda_0), \ldots, \varphi(\lambda_{n-1}) \right) \right)\\
	C^n_{\operatorname{cub}} ( \varphi, \psi) (g) (\lambda) &= \psi \left( g \left( \varphi(\lambda) \right) \right) 
\end{align*}
for any $n \in \Z$, any $f \in C^n_{} ( \Lambda, \mathcal{M})$, any $g \in C^n_{\operatorname{cub}} ( \Lambda, \mathcal{M})$, any $\left( \lambda_0, \ldots, \lambda_{n-1} \right) \in \Lambda^{*n}$ and any $\lambda \in Q_n(\Lambda)$. 

\begin{prop}\label{prop:naturality}
	For any $k$-graph $\Lambda$, let $\boxempty_{*,\Lambda}^\mathcal{N}$, $\triangledown_{*,\Lambda}^\mathcal{N}$, $\boxempty^{*,\Lambda}_\mathcal{M}$, and $\triangledown^{*,\Lambda}_\mathcal{M}$ be the $\Lambda$-\mbox{(co-)}chain maps as defined in Theorem~\ref{thm:chain-maps}, where the dependence on $\Lambda$ is emphasized in the notations. Then
	\begin{enumerate}
		\item the collections $\left\{ \boxempty_{*,\Lambda}^\mathcal{N} \right\}$ and $\left\{ \triangledown_{*,\Lambda}^\mathcal{N} \right\}$, where $(\Lambda, \mathcal{N})$ ranges over the objects of $k \text{\textendash} \mathfrak{graph} \text{\textendash} \mathfrak{LMod}$, form a pair of natural transformations between $C_*^{} ( -, -)$ and $C_*^{\operatorname{cub}} (-,-)$, and thus induce natural isomorphisms between the homology functors $H_*^{} ( -, -)$ and $H_*^{\operatorname{cub}} (-,-)$; 
		\item the collections $\left\{ \boxempty^{*,\Lambda}_\mathcal{M} \right\}$ and $\left\{ \triangledown^{*,\Lambda}_\mathcal{M} \right\}$, where $(\Lambda, \mathcal{M})$ ranges over the objects of $k \text{\textendash} \mathfrak{graph}^{\operatorname{op}} \text{\textendash} \mathfrak{RMod}$, form a pair of natural transformations between $C^*_{} ( -, -)$ and $C^*_{\operatorname{cub}} (-,-)$, and thus induce natural isomorphisms between the cohomology functors $H^*_{} ( -, -)$ and $H^*_{\operatorname{cub}} (-,-)$.
	\end{enumerate}
\end{prop}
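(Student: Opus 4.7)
The strategy will be to reduce both naturality claims to Lemma~\ref{lem:naturality-equivariant} together with a short direct check; naturality of the induced isomorphisms on (co)homology then follows for free, since passing to (co)homology is a functor from $\mathfrak{chain}$ (resp.\ $\mathfrak{cochain}$) to sequences of abelian groups. So the real work is at the (co)chain level.

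For the homology claim (1), I would exploit the factorization
\[
(\varphi,\psi)=(\varphi,\operatorname{id}_{\mathcal N'\varphi})\circ(\operatorname{id}_\Lambda,\psi)
\]
of an arbitrary morphism in $k\text{\textendash}\mathfrak{graph}\text{\textendash}\mathfrak{LMod}$, and verify naturality for each factor separately. For the factor $(\operatorname{id}_\Lambda,\psi)$, the explicit formulas in Theorem~\ref{thm:chain-maps} make the check essentially immediate: $\triangledown_n^{\mathcal N}$ passes $\psi$ through unchanged on the coefficient slot, while $\boxempty_n^{\mathcal N}$ touches the coefficient $a$ only through the left $\Lambda$-action $\lambda(m,d(\lambda))\cdot a$, and $\psi$ commutes with this action precisely because it is a $\Lambda$-module map. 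Equivalently, via the identifications of Proposition~\ref{prop:cubical-chain-iso}, these chain maps arise by applying $-\otimes_\Lambda\mathcal N$ to the $\Lambda$-chain maps of Constructions~\ref{constr:triangle-map} and~\ref{constr:box-map}, and $\otimes_\Lambda$ is functorial in its second argument. For the factor $(\varphi,\operatorname{id})$, the naturality square, transported through Proposition~\ref{prop:cubical-chain-iso}, is precisely the pair of identities supplied by Lemma~\ref{lem:naturality-equivariant}, tensored with $\mathcal N'\varphi$ (which, being $\mathcal N'$ precomposed with $\varphi$, carries a canonical $\Lambda$-module structure).

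The cohomology claim (2) will be handled dually. Morphisms in $k\text{\textendash}\mathfrak{graph}^{\operatorname{op}}\text{\textendash}\mathfrak{RMod}$ factor as $(\varphi,\psi)=(\operatorname{id}_{\Lambda'},\psi)\circ(\varphi,\operatorname{id}_{\mathcal M\varphi})$, and the same two-step argument applies, with $\operatorname{Hom}_\Lambda(-,-)$ in place of $-\otimes_\Lambda-$ and the contravariance in the $k$-graph variable appropriately accounted for; again Lemma~\ref{lem:naturality-equivariant} handles the $\varphi$-component after applying $\operatorname{Hom}_\Lambda(-,\mathcal M\varphi)$, and the right $\Lambda'$-module condition on $\psi$ handles the $\psi$-component.

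No serious obstacle is anticipated. Lemma~\ref{lem:naturality-equivariant} already does the substantive combinatorial work (the identities $\varphi(\lambda_i^\sigma)=\varphi(\lambda)_i^\sigma$ for the triangulation map and $\widehat\boxempty_n(\varphi(\lambda_0),\ldots,\varphi(\lambda_n);K)=\varphi(\widehat\boxempty_n(\lambda_0,\ldots,\lambda_n;K))$ for the cubulation map), so everything else is either a direct formula inspection or bookkeeping around the isomorphisms of Proposition~\ref{prop:cubical-chain-iso}. The mildest subtlety is simply keeping straight the difference between the $\Lambda$-action on $\mathcal N'\varphi$ (through $\varphi$) and the $\Lambda'$-action on $\mathcal N'$, which is precisely what the definitions of the two categories $k\text{\textendash}\mathfrak{graph}\text{\textendash}\mathfrak{LMod}$ and $k\text{\textendash}\mathfrak{graph}^{\operatorname{op}}\text{\textendash}\mathfrak{RMod}$ are designed to encode.
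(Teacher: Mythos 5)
Your proposal is correct and follows essentially the same route as the paper, whose entire proof reads ``This follows from Lemma~\ref{lem:naturality-equivariant} and Proposition~\ref{prop:cubical-chain-iso}'' --- precisely the two ingredients you use, with your factorization of $(\varphi,\psi)$ into $(\varphi,\operatorname{id})\circ(\operatorname{id},\psi)$ and the module-map check for the $\psi$-component simply making explicit the routine bookkeeping the paper leaves to the reader.
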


\begin{proof}
	This follows from Lemma~\ref{lem:naturality-equivariant} and Proposition~\ref{prop:cubical-chain-iso}. 
\end{proof}

\subsection{Cochain maps in low dimensions}

In \cite{kps-twisted}, Kumjian, Pask, and Sims constructed explicit maps on the cocycles that induce isomorphisms between cubical and categorical cohomology groups in low dimensions ($n \leq 2$) and with constant coefficients. Fixing a constant right $\Lambda$-module $M^{\Lambda}$ associated to an abelian group $M$, we now compare our cochain maps $\boxempty^{n}_{M^{\Lambda}} $ and $\triangledown^{n}_{M^{\Lambda}} $ (see Theorem~\ref{thm:chain-maps}) with the maps defined in \cite{kps-twisted}, for $n = 0$, $1$, and $2$. For the sake of brevity, we write $M$ in place of $M^{\Lambda}$. 

For $n=0$, identifying both 0-cubes and composable 0-tuples with vertices, we see that both $\boxempty_{0} $ and $\triangledown_{0} $ induce the identity map on vertices. Hence $\boxempty^{0}_{M} $ and $\triangledown^{0}_{M} $ give the identity map on $M$-valued functions over the vertices, which agrees with the construction in \cite[Remark~3.9]{kps-twisted}. 

When $n=1$, the recipe from \cite{kps-twisted} for passing from a  composable 1-tuple  $\lambda \in \Lambda$ to a linear combination of 1-cubes can be found in Theorem 3.10 and in particular Equation (3.7) of \cite{kps-twisted}.  To explain the recipe, we decompose an arbitrary $\lambda \in \Lambda$ as a product $\underline{\lambda}_k \underline{\lambda}_{k-1} \cdots \underline{\lambda}_1$, where $d(\underline{\lambda}_i)  = d_i(\lambda) e_{i} $ for all $i$, and then write $\underline{\lambda}_i = \underline{\lambda}_{i,1} \ldots \underline{\lambda}_{i,d_i(\lambda)}$ where $\underline\lambda_{i,j}$ is an edge of degree $e_i$ for all $i$ and $j$. Given a cubical 1-cocycle $f \in  Z^1_{\operatorname{cub}} ( \Lambda, M) \subset C^1_{\operatorname{cub}} ( \Lambda, M)$, Kumjian, Pask, and Sims define a categorical 1-cocycle $\widetilde{f} \in Z^1_{} ( \Lambda, M)$ by 
\[
\widetilde{f} (\lambda) = \sum_{i=1}^{k} \sum_{j = 1}^{d_i(\lambda)} f(\underline{\lambda}_{i,j})
\]
In fact, \cite[Theorem 3.10]{kps-twisted} establishes that $\widetilde{f} (\lambda) = \sum_{i=1}^{r} f({\lambda}_{r})$ for any sequence $\lambda_1 , \ldots, \lambda_r$ of edges with $\lambda_1 \cdots \lambda_r  = \lambda$. It is straightforward to check that $\widetilde{f} = \boxempty^{1}_{M} (f)$ by identifying ${k \choose 1}$ with $\{ 1, \ldots, k\}$ and $\underline{\lambda}_{i,j}$ with $\lambda (b + (j -1) e_i , b + je_i)$, where $b$ is as in Theorem~\ref{thm:chain-maps}. 

The aforementioned \cite[Theorem 3.10]{kps-twisted} also shows that the analogue of $\triangledown^{1}_{M}$ for Kumjian, Pask, and Sims was induced by viewing each edge as a $1$-tuple in the obvious way; and indeed, if $\lambda$ is a 1-cube, then $\triangledown_1(\lambda, \mu) = (\lambda, \mu)$.

In order to describe the analogue of $\boxempty^{2}_{M}$ from Theorem 3.16 of \cite{kps-twisted}, we  first recall some   notation used in \cite{kps-twisted}.  Namely, for $\lambda \in \Lambda$, we write 
\[ \lambda = \overline \lambda_1 \cdots\overline{ \lambda}_k\]
where $d(\lambda_i) = d_i(\lambda) e_i$.  With this notation, in \cite{kps-twisted}, Kumjian, Pask, and Sims associated to a composable 2-tuple $(\lambda, \mu)$ the collection of 2-cubes we need to ``flip'' in order to convert $(\overline{\lambda \mu})_1 (\overline{\lambda \mu})_2 \cdots (\overline{\lambda \mu})_{k}$ to  $\overline{\lambda}_1 \overline{\lambda}_{2} \cdots \overline{\lambda}_k \overline{\mu}_1 \cdots \overline{\mu}_k$. The first such 2-cube is given by $\lambda\mu(d_1(\lambda)e_1 , (d_1(\lambda) + d_1(\mu) )e_1+ d_2(\lambda)e_2)$. Given a cubical $2$-cocycle $f \in  Z^2_{\operatorname{cub}} ( \Lambda, M)$, they defined a categorical 2-cocycle $c_f$ such that $c_f(\lambda)$ is given by the sum of the values of $f$ on the aforementioned 2-cubes. 

On the other hand, $\boxempty_2(\lambda, \mu, \nu)$ is the sum of the 2-cubes in $\widetilde{\Lambda}(s(\nu))$ associated to $d_i(\lambda)e_i \times d_{j}(\mu)e_j$ for $j > i$.  These are the 2-cubes we need to ``flip'' in order to convert 
$\left( \underline{\lambda \mu}_k \underline{\lambda \mu}_{k-1} \cdots \underline{\lambda \mu}_1 , \nu \right)$ to 
$\left( \underline{\lambda}_k \underline{\lambda}_{k-1} \cdots \underline{\lambda}_1 \underline{\mu}_k \cdots \underline{\mu_1} , \nu \right)$.  In other words, to pass from Kumjian, Pask, and Sims' procedure to $\boxempty_2$, we need to reverse our choice of ordering on the generators of $\N^k$.

It follows from the proof of \cite[Theorem 4.15]{kps-twisted} that reversing the color order takes a cubical 2-cocycle to its inverse. Thus, using $\boxempty_2$ instead of the procedure from \cite[Theorem 3.16]{kps-twisted} will remove the unfortunate minus sign which appears in the isomorphism  of \cite[Theorem 4.15]{kps-twisted} between cubical and categorical 2-cohomology.

The same Theorem 4.15 establishes that Kumjian, Pask, and Sims used (a version of) the map $\triangledown_2$ to map 2-cubes to composable 2-tuples. More precisely, this follows from the third displayed equation in the proof of \cite[Theorem 4.15]{kps-twisted}.  Here, one sees that if $\lambda = \mu \nu = \nu' \mu'$ where $d(\mu) = d(\mu') = e_i, d(\nu) = d(\nu') = e_j$ and $j > i$, then Kumjian, Pask, and Sims mapped the 2-cube $\lambda \in Q_2(\Lambda)$ to the element   
\[ (\mu, \nu) - (\nu', \mu') = \sum_{\sigma \in \Sigma_2} \operatorname{sgn}(\sigma) \left(\lambda^\sigma_1,  \lambda^\sigma_2\right) \in \Z \Lambda^{*2}.\] 
Consequently,  if we use the   functor $\widetilde \Lambda: \Lambda \to  k \text{\textendash} \mathfrak{graph}$ to translate the Kumjian-Pask-Sims map into a map $\Z \widetilde Q_2(\widetilde{\Lambda}) \to \mathcal P_2(\Lambda)$, we obtain precisely the map $\triangledown_2$.

In terms of cocycles, then, a categorical 2-cocycle $f \in Z^2_{} ( \Lambda, M)$ induces a cubical 2-cocycle $\lambda \mapsto f(\mu, \nu) - f(\nu', \mu')$. This is an analogue of the formula producing an alternating bicharacter from a 2-cocycle on the group $\Z^k$.

\bibliographystyle{amsplain}
\bibliography{eagbib}
\end{document}